\newcommand{\qee}{ \hfill\hspace{2pt}$\triangle$}
\newcommand{\marginnote}[1]{\ifthenelse{\isodd{\thepage}}{\normalmarginpar}
{\reversemarginpar}\marginpar{\fbox{\parbox{15mm}{\sloppy\footnotesize #1}}}}
\newtheorem{thm}{Theorem}[section]
\newtheorem{corol}[thm]{Corollary}
\newtheorem{lemma}[thm]{Lemma}
\newtheorem{prop}[thm]{Proposition}
\newtheorem{defin}[thm]{Definition}
\theoremstyle{remark}
\newtheorem{rema}[thm]{Remark}
 \newenvironment{remark}{\begin{rema}}{\qee\end{rema}}
\newtheorem{exe}[thm]{Example}
\newcommand{\Oc}{\mathcal O}
\newcommand{\PP}{{\mathbb P}}
\newcommand{\R}{\mathbb R}
\newcommand{\C}{\mathbb C}
\newcommand{\Q}{\mathbb Q}
\newcommand{\Z}{\mathbb Z}
\def\ker{\operatorname{ker}}
\def\rk{\operatorname{rk}}
\def\Ad{\operatorname{Ad}}
\def\End{\operatorname{End}}
\def\dim{\operatorname{dim}}
\def\rad{\operatorname{rad}}
\def\Aut{\operatorname{Aut}}
\newcommand\grass{\mbox{Gr}}
\newcommand\hgrass{{\mathfrak{Gr}}}
\newcommand{\cO}{{\mathcal O}}
\newcommand{\fE}{{\mathfrak E}}
\newcommand{\fR}{{\mathfrak R}}
\newcommand{\fQ}{{\mathfrak Q}}
\newcommand{\fV}{{\mathfrak V}}
\newcommand{\g}{{\mathfrak g}} 
\newcommand{\cat}[1]{\boldsymbol{\operatorname{#1}}}
\renewcommand{\bysame}{\leavevmode\vrule height 2pt depth-1.6pt width 23pt,\ }
\begin{document}
\parbox{1cm}\hfill\vspace{-30pt}
\title[Semistable and nef principal Higgs bundles]{Semistable  and numerically 
effective \\[8pt] principal (Higgs) bundles}
\bigskip
\date{\today}
\subjclass[2000]{14F05, 14H60, 14J60} \keywords{Principal (Higgs) bundles, semistability, numerically effective principal (Higgs) bundles}
\thanks{This research was partly supported by the Spanish {\sc
mec} through the research project MTM2009-07289,   by 
``Grupo de Excelencia de Castilla y Le\'on" GR46, by
Istituto Nazionale per l'Alta Matematica and   by the European project {\sc misgam}.  Both authors are members of the research
group {\sc vbac} (Vector Bundles on Algebraic Curves).}

 \maketitle \thispagestyle{empty} \vspace{-3mm}
\begin{center}{\sc Ugo Bruzzo} \\
Scuola Internazionale Superiore di Studi Avanzati,\\ Via Beirut 2-4, 34013
Trieste, Italia; \\ Istituto Nazionale di Fisica Nucleare, Sezione di Trieste \\ E-mail: {\tt bruzzo@sissa.it} 
\\[6pt]
{\sc Beatriz Gra\~na Otero} \\
Departamento de Matem\'aticas  and Instituto de F\'\i sica \\ Fundamental y Matem\'aticas, Universidad de Salamanca,
\\ Plaza de la Merced 1-4, 37008 Salamanca, Espa\~na\\ E-mail: {\tt beagra@usal.es}
\end{center}

\vfill

\begin{abstract} We study Miyaoka-type semistability criteria for principal Higgs $G$-bundles $\fE$ on complex projective manifolds of any dimension.
We prove that $\fE$ has the property of being semistable after pullback to any projective curve  if and only if certain line bundles, obtained from some characters
of the parabolic subgroups of $G$, are numerically effective. One also proves that
these conditions are met for semistable principal Higgs bundles whose adjoint bundle has vanishing second Chern class. 

In a second part of the paper,  we introduce   notions of numerical effectiveness and numerical flatness for principal (Higgs) bundles, discussing their main properties.
For (non-Higgs) principal bundles, we show that  
a numerically flat principal   bundle admits a reduction to a Levi factor which has a flat  Hermitian-Yang-Mills connection, and, as a consequence, that
the cohomology ring  of a numerically flat principal   bundle with coefficients in $\R$ is trivial. 
To our knowledge this notion of numerical effectiveness is new even in the case of (non-Higgs) principal bundles. \end{abstract}

\newpage \tableofcontents

\section{Introduction}
In 1987 Miyaoka gave a criterion for the semistability of a vector bundle $V$ on a projective curve in terms
of the numerical effectiveness of a suitable divisorial class (the relative anticanonical divisor of the
projectivization $\mathbb PV$ of $V$) \cite{Mi}. Recently several generalizations of this criterion have been formulated \cite{BH,BB1,BSch},
dealing with principal bundles, higher dimensional varieties, and considering also the case of bundles
on compact K\"ahler manifolds.

 In this paper we prove a Miyaoka-type criterion for principal Higgs bundles on complex
projective manifolds. Let us give a rough anticipation of this result. Given a principal Higgs $G$-bundle $\fE=(E,\phi)$
on a complex projective manifold $X$, with Higgs field $\phi$,
 and a parabolic subgroup $P$ of $G$, we introduce a subscheme
${\fR}_P(E,\phi)$ of the total space of the bundle $E/P\to X$ whose sections parametrize reductions of the structure group $G$ to $P$
that are compatible with the Higgs field $\phi$. Then in Theorem \ref{Miyahigher} we prove the equivalence
of the following conditions: for every reduction of $G$ to a parabolic subgroup $P$ which is compatible with the Higgs field, and every dominant character of $P$, a certain associated line bundle on ${\fR}_P(E,\phi)$ is numerically effective; the pullback $f^\ast\fE$ is semistable for any morphism $f\colon C\to X$, where $C$ is any smooth projective curve. One also shows that both conditions are met when $\fE$
is a semistable principal Higgs bundle such that $c_2(\Ad(E))=0$.

In a  second part of this paper, we define notions of numerical
effectiveness and numerical flatness which are appropriate for
principal Higgs bundles. It is known \cite{DPS94} that a
numerically flat   vector bundle admits a filtration whose
quotients are  stable Hermitian flat   vector bundles. 
In section \ref{flatcon} we prove that to a numerically flat  principal (non-Higgs) bundle
 one can associate a principal   bundle,
whose structure group is the Levi factor of a parabolic
subgroup of $G$, which is polystable,
and admits a flat ``Hermitian'' connection. This implies
that the   characteristic
ring (with coefficients in $\R$)  of the principal bundle vanishes.

Section \ref{tannaka} develops some Tannakian considerations; basically we show
the equivalence of proving our   theorem \ref{Miyahigher} for principal Higgs bundles
or for  Higgs vector bundles.

In an Appendix (Section \ref{appendix}) we offer a resume of our previous work on Higgs vector bundles
\cite{BG1,BG2,BH}, on which some parts of the present paper rely quite heavily.

As a principal Higgs bundle with zero Higgs field is exactly a
principal bundle, all results we prove in this paper hold true for
principal bundles. In this way we mostly recover
well-known results or some of the results in \cite{BB1,BB2} with their
proofs, at other times we provide simpler demostrations, while at
times the results are altogether new. The notion of numerical effectiveness
we introduce is, on the other hand, new also for the case of principal bundles.

{\bf Acknowledgements.} This paper was mostly written during a
visit of both authors at the University of Pennsylvania. We thank
Penn for hospitality and support, and the staff and the
scientists at the Department of Mathematics for providing an
enjoyable and productive atmosphere. We thank M.S.~Narasimhan, Tony Pantev and Carlos Simpson for
valuable suggestions. We also thank the  Department of Mathematics of Universit\'e d'Angers and the Department of Physics and Astronomy
of Rutgers University for hospitality while this paper was finalized.

\bigskip\section{Semistable principal bundles}\label{Sec:2}
In this short section we recall some basics about principal
bundles, notably the definition of (semi)stable principal bundle (basic references about this topic are \cite{Rama,BaShe}). Let $X$ be a
smooth complex projective variety, $G$ a complex reductive
algebraic group, and $\pi\colon E\to X$ a principal $G$-bundle on
$X$. If $\rho\colon G \to \operatorname{Aut}(Y)$ is a
representation of $G$ as automorphisms of a variety $Y$, we may
construct the associated bundle $E(\rho)=E\times_\rho Y$, the
quotient of $E\times Y$ under the   action of $G$ given by
$(u,y)\mapsto (ug,\rho(g^{-1})y)$ for $g\in G$. If $Y=\g$ is the
Lie algebra of $G$, and $\rho$ is the adjoint action of $G$ on
$\g$, one gets the adjoint bundle of $E$, denoted by  $\Ad(E)$.
Another important example is obtained when $\rho$ is given by a
group homomorphism $\lambda\colon G\to G'$; in this case the
associated bundle $E'=E\times_\lambda G'$ is a principal
$G'$-bundle. We say that the structure group $G$ of $E$ has been
extended to $G'$.

If $E$ is a principal $G$-bundle on $X$, and $F$ a principal
$G'$-bundle on $X$, a morphism $E\to F$ is a pair $(f,f')$, where
$f'\colon G\to G'$ is a group homomorphism, and $f\colon E\to F$
is a morphism of bundles on $X$ which is $f'$-equivariant, i.e.,
$f(ug)=f(u)f'(g)$. Note that this induces a vector bundle morphism
$\tilde f\colon \Ad(E)\to\Ad(F)$ given by $\tilde
f(u,\alpha)=(f(u),f'_\ast(\alpha))$, where $f'_\ast\colon\g\to\g'$
is the morphism induced on the Lie algebras.
As an example, consider a principal $G$-bundle $E$, a group homomorphism $\lambda\colon G\to G'$,
and the extended bundle $E'$. There is a natural morphism $(f,\lambda)\colon E\ \to E'$, where
$f=\operatorname{id}\times\lambda$ if we identify $E$ with $E\times_GG$.

If $K$ is a closed subgroup of $G$, a \emph{reduction} of the structure
group $G$ of $E$ to $K$ is a principal $K$-bundle $F$ over $X$ together
with an injective $K$-equivariant bundle morphism $F \to E$.
Let $E(G/K)$ denote
the bundle over $X$ with standard fibre $G/K$ associated to $E$ via
the natural action of $G$ on the homogeneous space $G/K$.
There is an isomorphism   $E(G/K)\simeq E/K$ of bundles over $X$. Moreover,
the reductions of the structure group of $E$ to $K$ are in a one-to-one correspondence
with sections $\sigma\colon X \to E(G/K)\simeq E/K$.

 We first recall the definition of
semistable principal bundle when the base variety $X$ is a curve.
Let $T_{E/K,X}$ be the vertical tangent bundle to the bundle
$\pi_K\colon E/K \to X$.

\begin{defin} Let $E$ be a principal $G$-bundle on  a smooth connected projective curve $X$. We say that
$E$ is stable (semistable) if for every proper parabolic subgroup $P\subset G$, and every reduction $\sigma\colon X \to E/P$,
the pullback $\sigma^\ast (T_{E/P,X})$ has positive (nonnegative) degree.
\end{defin}

When $X$ is a higher dimensional variety, the definition must be somewhat
refined; the introduction of an open dense subset whose complement
has codimension at least two should be compared with the definition
of (semi)stable vector bundle, which involves non-locally free subsheaves
(which are subbundles exactly on open subsets of this kind).

\begin{defin} Let $X$ be a polarized smooth projective variety. A principal  $G$-bundle $E$  on $X$ is stable (semistable)
 if and only   if  for any proper parabolic subgroup $P\subset G$, any open dense subset $U\subset X$ such that $\operatorname{codim}(X-U)\ge 2$, and any  reduction $\sigma\colon U \to (E/P)_{\vert U}$ of $G$ to $P$ on $U$, one has $\deg \sigma^\ast (T_{E/P,X}) > 0$ ($\deg \sigma^\ast (T_{E/P,X}) \ge 0$).
\end{defin}

Here it is important that the smoothness of $X$ guarantees that a
line bundle defined on an open dense subset of $X$, whose complement has codimension 2 at least,  extends uniquely to the whole of $X$, so that we may consistently consider
its degree. This is discussed in detail in \cite{RamaSub}, see also \cite{koba}, Chapter V.

\bigskip\section{Principal Higgs bundles}\label{phb}

We switch now to principal Higgs bundles.
Let $X$ be a smooth complex projective variety, and $G$ a reductive
complex  algebraic group. If $E$ is a principal $G$-bundle on $X$,
$\Ad(E)$ is its adjoint bundle, and  $\phi$, $\psi$ are  global sections
 of $\Ad(E)\otimes\Omega^1_X$, we can define a   section
 $[\phi,\psi]$ of $\Ad(E)\otimes\Omega^2_X$ by combining the bracket
 $[\,,\,]\colon \Ad(E)\otimes\Ad(E)\to\Ad(E)$ with the natural morphism
 $\Omega^1_X\otimes\Omega^1_X\to\Omega^2_X$.

\begin{defin} A principal Higgs $G$-bundle $\fE$ is a pair $(E,\phi)$, where
$E$ is a principal $G$-bundle, and $\phi$  is a global section
 of $\Ad(E)\otimes\Omega^1_X$ such that $[\phi,\phi]=0$.
\end{defin}

When $G$ is the general linear group, under the identification $\Ad(E)\simeq
\End(V)$, where $V$ is the vector bundle corresponding to $E$, this agrees with the usual definition  of Higgs vector bundle.

\begin{defin} \label{trivial} A principal Higgs $G$-bundle $\fE=(E,\phi)$ is \emph{trivial}
if $E$ is trivial, and $\phi=0$.
\end{defin}

A morphism between two principal Higgs bundles $\fE=(E,\phi)$ and $\fE'=(E',\phi')$
is a principal bundle morphism $f\colon E\to E'$ such that $(f_\ast\times \operatorname{id})(\phi)=\phi'$, where
$f_\ast\colon \Ad(E)\to\Ad(E')$ is the induced morphism between the adjoint bundles.

We introduce the notion of extension of the structure group
for a principal Higgs $G$-bundle $\fE=(E,\phi)$. Given a group
homomorphism $\lambda\colon G\to G'$, we consider the extended
principal bundle $E'$. The group $G$ acts on the Lie algebra $\g'$
of $G'$ via the homomorphism $\lambda$ (and the adjoint action of
$G'$), and the $\g'$-bundle associated to $E$ via the adjoint
action of $G'$ is isomorphic to $\Ad(E')$. In this way the Higgs
field of $\fE$ induces a Higgs field for $\fE'$. More generally,
if $\rho\colon G\to\operatorname{Aut}(V)$ is a linear
representation of $G$, the Higgs field of $\fE$ induces a Higgs
field for the associated vector bundle $E\times_\rho V$.

If $\fE$ is a principal Higgs $G$-bundle, we denote by
$\Ad(\fE)$ the Higgs vector bundle given by the adjoint bundle
$\Ad(E)$ equipped with the induced Higgs morphism.

Let $K$ be a closed subgroup of $G$, and $\sigma\colon X \to
E(G/K)\simeq E/K$ a reduction of the structure group of $E$ to
$K$. So one has a principal $K$-bundle $F_\sigma$ on $X$ and a
principal bundle morphism $i_\sigma\colon F_\sigma\to E$ inducing
an injective morphism of bundles $\Ad(F_\sigma) \to \Ad(E)$. Let
$\Pi_\sigma\colon  \Ad(E)\otimes
\Omega^1_X\to(\Ad(E)/\Ad(F_\sigma))\otimes \Omega^1_X $ be the
induced projection.

\begin{defin} A section $\sigma\colon X\to E/K $ is a {\em Higgs reduction} of $(E,\phi)$
if $\phi\in\ker \Pi_\sigma$.
\end{defin}

When this happens, the reduced bundle $F_\sigma$ is equipped with a Higgs field $\phi_\sigma$ compatible with $\phi$ (i.e., $(F_\sigma,\phi_\sigma)\to (E,\phi)$ is a morphism of principal Higgs bundles).

\begin{remark} Let us again consider the case when $G$ is the general linear group $Gl(n,\C)$, and let us assume that
$K$ is a maximal parabolic subgroup, so that $G/K$ is the Grassmann variety $\operatorname{Gr}_k(\C^n)$ of $k$-dimensional quotients
of $\C^n$ for some $k$. If $V$ is the vector bundle corresponding to $E$, a reduction $\sigma$ of $G$ to $K$ corresponds to a rank $n-k$
subbundle $W$ of $V$, and the fact that $\sigma$ is a Higgs reduction means that $W$ is $\phi$-invariant. \end{remark}

The choice of $\phi$ singles out a subscheme
of the variety $E/K$, which describes the Higgs reductions of the
pair $(E,\phi)$. Let $E_K$ denote the principal $K$-bundle
$E\to E/K$. Since the vertical tangent bundle $T_{E/K,X}$ is the
bundle associated to $E_K$ via the adjoint action of $K$ on
the quotient $\mathfrak g/\mathfrak k$, and $\pi_K^\ast \Ad(E)$ is
the bundle associated to $E_K$ via the adjoint action of $K$
on $\mathfrak g$,   there is a natural morphism $\eta\colon
\pi_K^\ast \Ad(E) \to T_{E/K,X}$. Then $\phi$ determines a section
$\eta(\phi) := (\eta \otimes \text{id})(\pi_{K}^{*}\phi)$ of
$T_{E/K,X}\otimes \Omega^1_{E/K}$.

\begin{defin} \label{Higgsred} The {\em scheme of  Higgs reductions of $\fE=(E,\phi)$ to $K$}
is the closed subscheme  ${\fR}_K(\fE)$ of $E/K$ given by the zero locus of $\eta(\phi)$.
\end{defin}

\begin{remark}\label{inducedHiggs} The Higgs field of $\fE$
induces a Higgs field on the restriction of $E_K$ to ${\fR}_K(\fE)$;
we denote by $\fE_K$ the resulting principal Higgs $K$-bundle.
\end{remark}

The construction of the scheme of  Higgs reductions is compatible with base change.
Let us recall that given a principal Higgs $G$-bundle $\fE=(E,\phi)$ over $X$,
and a morphism $f\colon Y \to X$, the pullback Higgs bundle $f^\ast\fE$ is
the pullback principal bundle $f^\ast E$ equipped with a Higgs field 
obtained by combining the pullback morphism 
$$\Ad(f^\ast E) \simeq  f^\ast \Ad(E) \to  \Ad(f^\ast E)\otimes f^\ast\Omega^1_X$$
with the natural morphism $f^\ast\Omega^1_X\to\Omega^1_Y$.
The above mentioned compatibility means that, if $f$ is a
morphism of smooth complex projective
varieties,  then
${\fR}_K(f^\ast(\fE))\simeq Y\times_X {\fR}_K(\fE)$. By
construction, $\sigma\colon X \to E(G/K)\simeq E/K$ is a Higgs
reduction if and only if it takes values in the subscheme
${\fR}_K(\fE)\subset E/K$. Moreover the scheme of Higgs reductions
is compatible with morphisms of principal Higgs bundles.  This
means that if $\fE=(E,\phi)$ is a principal Higgs $G$-bundle,
$\fE'=(E',\phi')$ a principal Higgs $G'$-bundle, $\psi\colon G\to
G'$ is a group homomorphism, and $f\colon\fE\to\fE'$ is a
$\psi$-equivariant morphism of principal Higgs bundles, then for
every closed subgroup $K\subset G$  the induced morphism $E/K\to
E'/K'$, where $K'=\psi(K)$, maps ${\fR}_K(\fE)$ into
${\fR}_{K'}(\fE')$.

Also, one should note that the scheme of Higgs reductions is in general singular,
so that in order to consider  Higgs bundles on it one needs to use the theory
of the de Rham complex for arbitrary schemes, as developed by Grothendieck
\cite{EGAIV}.

For the time being we restrict our attention to the case when $X$
is a curve. We start by introducing a notion of semistability for
principal Higgs bundles (which is equivalent to the one given in
Definition 4.6 in \cite{AnBis}).

\begin{defin}\label{Hstabcurv} Let $X$ be a smooth projective curve. A principal Higgs $G$-bundle
$\fE=(E,\phi)$ is stable (resp.~semistable)  if for every parabolic subgroup $P\subset G$
and every Higgs reduction $\sigma\colon X\to {\fR}_P(\fE)$ one has
$\deg \sigma^\ast (T_{E/P,X})> 0$ (resp.~$\deg \sigma^\ast (T_{E/P,X})\ge 0$). \end{defin}

\begin{lemma} Let $f\colon X' \to X$ be a nonconstant morphism
of smooth projective curves, and $\fE$ a principal Higgs $G$-bundle on $X$.
The pullback Higgs bundle $f^\ast\fE$ is semistable if and only if
$\fE$ is.
\label{basechange}
\end{lemma}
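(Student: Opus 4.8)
The plan is to prove the two implications separately, the implication ``$f^\ast\fE$ semistable $\Rightarrow\fE$ semistable'' being the elementary one. Arguing by contraposition, suppose $\fE$ is not semistable, so there are a parabolic subgroup $P\subset G$ and a Higgs reduction $\sigma\colon X\to\fR_P(\fE)$ with $\deg\sigma^\ast(T_{E/P,X})<0$. Since the formation of $\fR_P$ commutes with base change, $f^\ast\sigma$ takes values in $\fR_P(f^\ast\fE)\simeq X'\times_X\fR_P(\fE)$ and is therefore a Higgs reduction of $f^\ast\fE$ to $P$; moreover the vertical tangent bundle commutes with base change, so $(f^\ast\sigma)^\ast(T_{f^\ast E/P,X'})\simeq f^\ast\bigl(\sigma^\ast(T_{E/P,X})\bigr)$, which has degree $(\deg f)\cdot\deg\sigma^\ast(T_{E/P,X})<0$ because $f$, being nonconstant, is a finite covering of positive degree. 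Hence $f^\ast\fE$ is not semistable.

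For the converse, ``$\fE$ semistable $\Rightarrow f^\ast\fE$ semistable'', I would first reduce to the case in which $f$ is a Galois covering: if $g\colon X''\to X$ is a Galois covering factoring as $g=f\circ h$, then by the implication already established (applied to $h$) it suffices to show that $g^\ast\fE$ is semistable. So assume $f\colon X'\to X$ is Galois with group $\Gamma$, and suppose for contradiction that $f^\ast\fE$ is not semistable. The Higgs field of $f^\ast\fE$ is the pullback of that of $\fE$, hence $\Gamma$-invariant, and $\Gamma$ acts on $f^\ast\fE$ by automorphisms of principal Higgs bundles. Consider the canonical destabilizing Higgs reduction of $f^\ast\fE$ — the Higgs analogue of the Harder--Narasimhan (Behrend) canonical reduction — a reduction $\sigma'\colon X'\to\fR_P(f^\ast\fE)$ to some parabolic $P$ with $\deg(\sigma')^\ast(T_{f^\ast E/P,X'})<0$; by its uniqueness it is fixed by $\Gamma$. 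Since $f$ is Galois one has $X=X'/\Gamma$, and a $\Gamma$-invariant section of $f^\ast(E/P)\to X'$ descends to a section $\sigma\colon X\to E/P$ with $f^\ast\sigma=\sigma'$ (it extends across the branch locus because $E/P\to X$ is proper); as $\fR_P(\fE)$ is closed in $E/P$ and $f$ is surjective, $\sigma$ takes values in $\fR_P(\fE)$, i.e. it is a Higgs reduction of $\fE$. Then $(\deg f)\cdot\deg\sigma^\ast(T_{E/P,X})=\deg(\sigma')^\ast(T_{f^\ast E/P,X'})<0$ forces $\deg\sigma^\ast(T_{E/P,X})<0$, contradicting the semistability of $\fE$.

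The main obstacle is the input invoked in the second step: the existence and uniqueness of a canonical destabilizing Higgs reduction, together with the fact that $f^\ast\fE$ fails to be semistable exactly when this reduction is non-trivial. For (non-Higgs) principal bundles this is classical; in the Higgs case it can be quoted from the literature (cf.~\cite{AnBis}) or deduced from the vector bundle theory. An alternative route for ``$\fE$ semistable $\Rightarrow f^\ast\fE$ semistable'' bypasses canonical reductions altogether: a principal Higgs bundle is semistable if and only if its adjoint Higgs bundle $\Ad(\fE)$ is a semistable Higgs vector bundle, and since $\Ad(f^\ast\fE)\simeq f^\ast\Ad(\fE)$, one is reduced to the corresponding (standard) statement for Higgs vector bundles recalled in the Appendix. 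The descent step itself is routine, the only point worth checking being that a $\Gamma$-invariant section of $f^\ast(E/P)$ genuinely descends to $X=X'/\Gamma$, which follows from properness of $E/P\to X$ and faithfully flat descent.
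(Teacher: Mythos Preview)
Your proposal is correct. Interestingly, the ``alternative route'' you sketch in your final paragraph is precisely the argument the paper gives: the paper invokes Lemma~\ref{adjsemi} (semistability of $\fE$ is equivalent to semistability of the Higgs vector bundle $\Ad(\fE)$), observes that $\Ad(f^\ast\fE)\simeq f^\ast\Ad(\fE)$, and then cites \cite{BH} for the corresponding fact for Higgs vector bundles. That is the entire proof in the paper, for both directions at once.

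Your primary argument takes a genuinely different route. The easy direction you handle by direct pullback of a destabilizing Higgs reduction, which is elementary and self-contained; the paper does not isolate this direction. For the harder direction you use the classical Galois-descent strategy: pass to a Galois cover, exploit the uniqueness of the canonical (Behrend/Harder--Narasimhan) Higgs reduction to ensure $\Gamma$-invariance, and descend. This works, but it imports a nontrivial piece of structure theory --- the existence and uniqueness of the canonical Higgs reduction for principal Higgs bundles --- which the paper avoids entirely by reducing to the vector bundle case. The trade-off is that your Galois argument is intrinsic to principal bundles and would generalize to settings where an adjoint-bundle characterization might not be available, whereas the paper's proof is a two-line reduction but leans on two external results (Lemma~\ref{adjsemi}, itself quoted from \cite{AnBis}, and the vector-bundle base-change result from \cite{BH}). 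Either way the substance is outsourced; the paper just outsources more efficiently.
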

\begin{proof} As we shall prove in Lemma \ref{adjsemi}
in the case of $X$ of arbitrary
dimension, a principal Higgs bundle $\fE$ is semistable if and only if the
adjoint Higgs bundle $\Ad(\fE)$ is semistable (as a Higgs vector bundle).
In view of this result, our claim reduces to the analogous statement
for Higgs vector bundles, which was proved in \cite{BH}.
\end{proof}

If $\fE=(E,\phi)$ is a principal Higgs $G$-bundle on $X$, and
$K$ is a closed subgroup of $G$, we may associate with every character $\chi$ of $K$
a line bundle $L_\chi=E\times_\chi \C$ on $E/K$, where we regard $E$ as a principal
$K$-bundle on $E/K$. An elegant way to state results about reductions is to introduce the notion
of \emph{slope} of a reduction: we call $\mu_\sigma$, the slope of a  Higgs reduction $\sigma$,
the group homomorphism $\mu_\sigma\colon \mathcal X(K) \to \Q$ (where $ \mathcal X(K) $
is the group of characters of $K$) which to any character $\chi$ associates the degree
of the line bundle $\sigma^\ast(L_\chi^\ast)$.

By a simple modification of the proof of  Lemma 2.1 of \cite{Rama}
we can extend it  to Higgs bundles.
If $\g$ is the Lie algebra
of $G$ and $\g'=[\g,\g]$ is its semisimple part, let $\alpha_1,\dots,\alpha_r$ be simple roots of
$\g'$, and let $\lambda_1,\dots,\lambda_r$
be the corresponding system of fundamental weights of $\g'$. Given a parabolic subgroup
$P\subset G$, a character $\chi\colon P\to\C^\ast$ is said to be
\emph{dominant} if it is a linear combination of the fundamental weights $\lambda_i$
with nonnegative coefficients.
Such a character is  trivial on the centre $Z(G)$ of $G$.

\begin{lemma} A principal Higgs $G$-bundle
$\fE=(E,\phi)$ is semistable  if and only if for every parabolic subgroup $P\subset G$,
every nontrivial dominant character $\chi$ of $P$, and every Higgs reduction $\sigma\colon X\to {\fR}_P(\fE)$, one has $\mu_\sigma(\chi)\ge 0$.
\label{RamaHiggs}\end{lemma}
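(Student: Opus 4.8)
The plan is to mimic the classical argument of Ramanathan (Lemma 2.1 of \cite{Rama}), modified to account for the Higgs structure, which is already built into the scheme of Higgs reductions $\fR_P(\fE)$.

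First I would establish the easy direction: if $\fE$ is semistable in the sense of Definition \ref{Hstabcurv}, then for every parabolic $P$, every Higgs reduction $\sigma\colon X\to\fR_P(\fE)$, and every dominant character $\chi$ of $P$, one has $\mu_\sigma(\chi)\ge 0$. The point is that $\deg\sigma^\ast(T_{E/P,X})$ can be computed in terms of the slope homomorphism: the vertical tangent bundle $T_{E/P,X}$ is the bundle associated to $E_P$ via the adjoint action of $P$ on $\g/\mathfrak p$, and its determinant corresponds to a specific character of $P$, namely (twice) the sum of the positive roots not in $P$, or equivalently $2\rho_P$, which is itself a dominant character (a nonnegative combination of the fundamental weights $\lambda_i$ that are ``not in $P$''). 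More generally, writing a dominant character $\chi=\sum n_i\lambda_i$ with $n_i\ge 0$, one expresses $\deg\sigma^\ast(L_\chi^\ast)=\mu_\sigma(\chi)$ as a nonnegative combination of the degrees $\deg\sigma^\ast(T_{E/P',X})$ where $P'$ ranges over the maximal parabolic subgroups containing $P$ associated to the fundamental weights appearing in $\chi$. Here one uses the standard fact that a reduction to $P$ induces (by further extension of the structure group) reductions to each such maximal $P'\supset P$, and crucially that these induced reductions are again Higgs reductions — this follows because the condition $\phi\in\ker\Pi_\sigma$ is inherited under $\Ad(F_\sigma)\subset\Ad(F_{\sigma'})\subset\Ad(E)$, or equivalently from the functoriality of $\fR_{(-)}(\fE)$ under the projections $E/P\to E/P'$ noted in the paragraph before Definition \ref{Hstabcurv}. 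Each term is then $\ge 0$ by semistability, hence $\mu_\sigma(\chi)\ge0$.

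For the converse, suppose $\mu_\sigma(\chi)\ge 0$ for all parabolic $P$, all dominant $\chi$, and all Higgs reductions $\sigma$; I must show $\deg\sigma^\ast(T_{E/P,X})\ge0$ for every Higgs reduction to every parabolic $P$. The key algebraic input, which is exactly Ramanathan's observation, is that the character $\det(\g/\mathfrak p)^\ast$ of $P$ — which computes $\deg\sigma^\ast(T_{E/P,X})$ up to sign — is, while not itself a single fundamental weight, a nonnegative rational combination of dominant characters of the various maximal parabolics $P'\supseteq P$; more precisely $2\rho_P$ pulls back to each $E/P'$ as a positive multiple of the fundamental weight $\lambda_{i(P')}$. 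Thus $\deg\sigma^\ast(T_{E/P,X})$ decomposes as a sum of terms $\mu_{\sigma'}(\chi')$ with $\sigma'$ the induced Higgs reduction to $P'$ and $\chi'$ dominant, each $\ge 0$ by hypothesis. Summing gives the result. The only genuinely new point over the non-Higgs case is verifying that every reduction induced from a Higgs reduction is again a Higgs reduction, so that the hypothesis applies to it; I would dispatch this using the compatibility of the scheme of Higgs reductions with the natural morphisms $E/P\to E/P'$ for $P\subset P'$.

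The main obstacle, such as it is, is purely bookkeeping on the root-system side: one must carefully identify which fundamental weights $\lambda_i$ are ``associated to'' the maximal parabolics containing a given $P$, check the signs and the nonnegativity of the coefficients in the decomposition of $\det(\g/\mathfrak p)$, and handle the centre $Z(G)$ correctly (dominant characters being trivial on $Z(G)$, whereas $T_{E/P,X}$ only sees the semisimple part $\g'=[\g,\g]$, which is precisely why restricting to $\g'$ and to dominant $\chi$ loses no information). None of this involves the Higgs field, so it is literally the content of \cite[Lemma 2.1]{Rama}; the Higgs modification amounts only to replacing ``reduction'' by ``Higgs reduction'' and ``section of $E/P$'' by ``section of $\fR_P(\fE)$'' throughout, which is legitimate precisely because of the functoriality properties recorded above. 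I therefore expect the proof to be short, essentially a citation of Ramanathan's lemma together with the remark on functoriality of $\fR_P(\fE)$.
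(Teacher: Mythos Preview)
Your proposal is correct and follows essentially the same approach as the paper: both reduce to Ramanathan's Lemma 2.1 for the root-theoretic identification of $\det(T_{E/P,X})$ with a (negative) multiple of a fundamental weight, decompose an arbitrary dominant character through the maximal parabolics $P_k\supset P$, and use the functoriality of $\fR_{(-)}(\fE)$ under $E/P\to E/P_k$ to see that the induced reductions $\sigma_k$ are again Higgs reductions. The paper's proof is terser---it treats the maximal case as an ``if and only if'' and then only writes out the forward direction for non-maximal $P$---whereas you spell out both directions symmetrically; but the content is the same.
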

\begin{proof}   We may  at first assume that $P$ is a maximal parabolic
subgroup corresponding to a root $\alpha_i$. It has been proven in
\cite[Lemma 2.1]{Rama} that the determinant of the vertical tangent bundle
$T_{E/P,X}$ is associated to the principal $P$-bundle $ E \to E/P$
via a character that may be expressed as $\mu = - m\lambda_i$, where
$\lambda_i$ is the weight corresponding to $\alpha_i$, and $m\ge 0$.
Thus, if $\sigma \colon X \to {\fR}_P(\fE)$ is a Higgs reduction,
 $\deg(\sigma^\ast(L^\ast_\mu))\ge 0 $ if and only if $\deg \sigma^\ast (T_{E/P,X})\ge 0$.
 
 If $P$ is not maximal, any dominant character of $P$ is a sum of
 dominant characters $\chi_k$ of the maximal parabolic subgroups $P_k$ that contain $P$,
 with $k=1,\dots,m$ for some $m$.
 Moreover, any Higgs reduction
 $\sigma\colon X\to\fR_P(\fE)$ induces a Higgs reduction $\sigma_k \colon X \to
 \fR_{P_k}(\fE)$. If $\fE$ is semistable, we have $\deg\sigma_k^\ast(L_{\chi_k})^\ast\ge 0$.
 Since  $\sigma^\ast(L_{\chi})\simeq \sigma_1^\ast (L_{\chi_1}) \otimes \dots  \otimes  \sigma_m^\ast (L_{\chi_m})$, we have $\mu_\sigma(\chi)\ge 0$.
 \end{proof}

We may now state and prove a Miyaoka-type semistability criterion
for principal Higgs bundles over projective curves. This generalizes Proposition 2.1
of \cite{BB1}, and, of course, Miyaoka's original criterion in \cite{Mi}.

\begin{thm} A principal Higgs $G$-bundle
$\fE=(E,\phi)$ on a smooth projective curve $X$ is semistable if and only if for every parabolic subgroup $P\subset G$, and
every nontrivial dominant character $\chi$ of $P$, the line bundle $L^\ast_\chi$ restricted to
${\fR}_P(\fE)$ is nef. \label{princMiya}
\end{thm}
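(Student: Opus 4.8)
The plan is to reduce the statement to the characterization of semistability given in Lemma~\ref{RamaHiggs}, using the fact that nefness of a line bundle on a projective variety is detected by restriction to curves. One direction is almost immediate: suppose $L^\ast_\chi$ is nef on $\fR_P(\fE)$ for every parabolic $P$ and every nontrivial dominant character $\chi$. Given a Higgs reduction $\sigma\colon X\to\fR_P(\fE)$, the image $\sigma(X)$ is a curve in $\fR_P(\fE)$ (or a point, in which case the relevant degree is zero), and $\mu_\sigma(\chi)=\deg\sigma^\ast(L^\ast_\chi)=L^\ast_\chi\cdot\sigma_\ast[X]\ge 0$ by nefness. By Lemma~\ref{RamaHiggs} this gives semistability of $\fE$.

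For the converse, assume $\fE$ is semistable; fix $P$ and a nontrivial dominant character $\chi$, and let $L=L^\ast_\chi$ restricted to $\fR_P(\fE)$. To prove $L$ is nef it suffices, by the standard criterion, to show $\deg g^\ast L\ge 0$ for every nonconstant morphism $g\colon C\to\fR_P(\fE)$ from a smooth projective curve $C$. Composing with the projection $\pi_P\colon\fR_P(\fE)\to X$ gives a morphism $f=\pi_P\circ g\colon C\to X$; after replacing $C$ by the normalization of its image (or noting $f$ is constant, a trivial case) we may assume $f$ is nonconstant. Now $g$ is a section of the pulled-back bundle: precisely, by the compatibility of the scheme of Higgs reductions with base change, $\fR_P(f^\ast\fE)\simeq C\times_X\fR_P(\fE)$, and $g$ together with $\id_C$ determines a section $\sigma_g\colon C\to\fR_P(f^\ast\fE)$, i.e.\ a Higgs reduction of $f^\ast\fE$ to $P$. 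Moreover $\sigma_g^\ast$ of the line bundle associated to $\chi$ on $\fR_P(f^\ast\fE)$ is identified with $g^\ast L$, since $L_\chi$ is functorial with respect to the morphism $f^\ast E\to E$ of principal $P$-bundles covering $\fR_P(f^\ast\fE)\to\fR_P(\fE)$.

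It remains to see $\deg\sigma_g^\ast L\ge 0$. By Lemma~\ref{basechange} the pullback $f^\ast\fE$ is again semistable (here $f$ is a nonconstant morphism of smooth projective curves), so Lemma~\ref{RamaHiggs} applied to $f^\ast\fE$, the parabolic $P$, the dominant character $\chi$, and the Higgs reduction $\sigma_g$ yields $\mu_{\sigma_g}(\chi)=\deg\sigma_g^\ast L\ge 0$, which is what we wanted. I expect the main technical obstacle to be the bookkeeping in the previous paragraph: one must check carefully that an arbitrary curve mapping into $\fR_P(\fE)$ does arise, after base change along its projection to $X$, as a genuine section of $\fR_P(f^\ast\fE)\to C$, and that under this identification $g^\ast L^\ast_\chi$ matches $\mu_{\sigma_g}(\chi)$; this is exactly where the compatibility of $\fR_P(-)$ with base change and the functoriality of $L_\chi$ under principal-bundle morphisms are used, and a degenerate case (the section landing in a fibre of $\pi_P$, so that $f$ is constant) has to be dispatched separately, though there the degree in question vanishes and there is nothing to prove.
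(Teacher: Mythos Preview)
Your approach is essentially the paper's: both directions reduce to Lemma~\ref{RamaHiggs}, with the forward direction using Lemma~\ref{basechange} to preserve semistability under pullback to a curve mapping to $X$. Your organization is in fact slightly cleaner than the paper's --- rather than taking an irreducible curve $Y\subset\fR_P(\fE)$ and passing to a base change $h\colon X'\to X$ that splits $Y\to X$ into a union of sections, you start from an arbitrary morphism $g\colon C\to\fR_P(\fE)$ from a smooth curve and use base-change compatibility of $\fR_P(-)$ to obtain directly a section of $\fR_P(f^\ast\fE)\to C$.

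Two small slips to fix. First, the phrase ``after replacing $C$ by the normalization of its image'' should be deleted: taken literally it would lose the map $g$, and it is unnecessary since $X$ is a curve, so $f$ nonconstant already makes $f$ a finite surjective morphism of smooth projective curves and Lemma~\ref{basechange} applies as is. Second, in the degenerate case where $f$ is constant, so that $g(C)$ lies in a fibre of $\pi_P$, the degree $\deg g^\ast L^\ast_\chi$ does \emph{not} vanish in general. What is true (and what the paper invokes explicitly) is that the restriction of $L^\ast_\chi$ to a fibre $G/P$ is the line bundle associated to the weight $-\chi$, which is nef precisely because $\chi$ is dominant; hence $\deg g^\ast L^\ast_\chi\ge 0$ in this case as well.
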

\begin{proof} Assume that $\fE$ is semistable and that ${L^\ast_\chi}_{\vert {\fR}_P(\fE)}$
is not nef. Then there is an irreducible curve $Y\subset {\fR}_P(\fE)$ such that
$[Y]\cdot c_1(L^\ast_\chi)<0$. Since $\chi$ is dominant, the line bundle $L^\ast_\chi$ is nef when restricted
to a fibre of the projection $E/P\to X$, so that the curve $Y$ cannot be contained in such a fibre.
Then $Y$ surjects onto $X$. One can choose  a morphism of  smooth
projective curves  $h\colon X'\to X$   such that $\tilde Y = X' \times_X Y$ is a  curve in $h^\ast({\fR}_P(\fE))$, whose irreducible components are smooth and 
map   isomorphically to $X'$ (i.e., $\tilde Y \to X'$ is a split unramified covering). By
Lemma \ref{basechange},  the pullback of $\fE$ to $\tilde Y$ is semistable.
We may think of the irreducible components of $\tilde Y$ as images of sections $\sigma_j$ of $h^\ast({\fR}_P(\fE))$. 
By Lemma \ref{RamaHiggs}  this implies that $\deg \sigma_j^\ast (L')^\ast\ge 0$,
where $L'$ is the pullback of $L_\chi$ to $h^\ast({\fR}_P(\fE))$. This in turn implies
$[Y]\cdot c_1(L^\ast_\chi)\ge 0$, but this contradicts our assumption.

The converse is obvious in view of Lemma \ref{RamaHiggs}.
\end{proof}

\begin{remark} \label{remframe} Let  $G$ be the linear group $Gl(n,\C)$.
 If $\fE=(E,\phi)$ is a principal
Higgs $G$-bundle, and $V$ is  the rank $n$ vector bundle corresponding to $E$,
then the identification $\Ad(E)\simeq\End(V)$ makes $\phi$ into a Higgs
morphism $\tilde\phi$  for $V$. The semistability
of $\fE$ is equivalent to the semistability of the Higgs vector bundle
$(V,\tilde\phi)$.

If $P_k$ is   a maximal parabolic subgroup of $Gl(n,\C)$,
$E/P_k$ is the Grassmann bundle $\operatorname{Gr}_k(V)$ of rank $k$ locally free
quotients of $V$.
Then Theorem \ref{princMiya} corresponds to the result given in \cite{BH},
according to which $(V,\phi)$ is semistable if and only if certain numerical classes
$\theta_k$ in a closed subscheme of $\operatorname{Gr}_k(V)$ are nef (see the section \ref{appendix} and  \cite{BH,BG1,BG2} for details).
\end{remark}

\bigskip\section{The higher-dimensional case}
In this section we consider the case of a base variety $X$ which is
a complex projective manifold of any dimension. Let $X$ be equipped with a polarization $H$, and let $G$ be a reductive
complex algebraic group.

\begin{defin} \label{Higgsstabhigher} A principal Higgs $G$-bundle $\fE=(E,\phi)$ is stable (resp.~semistable)
if and only if for any proper parabolic subgroup $P\subset G$,   any open dense subset $U\subset X$ such that
$\operatorname{codim}(X-U)\ge 2$, and any Higgs reduction $\sigma\colon U \to {{\fR}_P(\fE)}_{\vert U}$ of $G$ to $P$ on $U$,
one has $\deg \sigma^\ast (T_{E/P,X}) > 0$ (resp.~$\deg \sigma^\ast (T_{E/P,X}) \ge 0$). \end{defin}

\begin{remark} The arguments in the proof of Lemma \ref{RamaHiggs}
go through also in the higher dimensional case, allowing one to show that
a principal Higgs $G$-bundle $\fE$ is semistable (stable) --- according to Definition
\ref{Higgsstabhigher} --- if and only if for any proper parabolic subgroup $P\subset G$, any nontrivial dominant character $\chi$ of $P$, any open dense subset $U\subset X$ such that
$\operatorname{codim}(X-U)\ge 2$, and any Higgs reduction $\sigma\colon U \to {{\fR}_P(\fE)}_{\vert U}$ of $G$ to $P$ on $U$,
the line bundle $\sigma^\ast( L_{\chi}^\ast)$ has nonnegative (positive) degree.
\label{remRama}
 \end{remark}

It is known that certain  extensions of the structure group of a
semistable principal bundle are still semistable \cite{RamaRama},
and that a principal bundle is semistable if and only if its
adjoint bundle is \cite{Rama}. The same is true in the Higgs case.

\begin{lemma} \label{adjsemi} (i) A principal Higgs bundle
$\fE$ is semistable if and only if $\Ad(\fE)$ is semistable (as a Higgs vector bundle).

(ii) A principal Higgs $G$-bundle $\fE=(E,\phi)$ is semistable if and only if
for every  linear representation $\rho\colon G\to\operatorname{Aut}(V)$ of $G$ such that
$\rho(Z(G)_0)$ is contained in the centre of $\operatorname{Aut}(V)$, the associated Higgs vector
bundle $\mathfrak V = \fE\times_\rho V$ is semistable (here $Z(G)_0$ is the
component of the centre of $G$ containing the identity).
\end{lemma}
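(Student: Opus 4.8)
The plan is to reduce everything to statements about Higgs subsheaves of the adjoint Higgs bundle $\Ad(\fE)$, and then to obtain the two nontrivial implications as the Higgs analogues of the results of Ramanathan \cite{Rama} and Ramanan--Ramanathan \cite{RamaRama}. Concretely, I would prove directly the ``easy'' implication in (i) (``$\Ad(\fE)$ semistable $\Rightarrow$ $\fE$ semistable''), prove the ``$\Rightarrow$'' direction of (ii) (``$\fE$ semistable $\Rightarrow$ $\mathfrak V$ semistable''), and then derive the remaining implications formally from these two.

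The first step is a dictionary. For a proper parabolic $P\subset G$ let $\mathfrak n_P\subset\mathfrak p$ be the nilpotent radical. The Killing form on the semisimple part of $\mathfrak g$ gives a $P$-module isomorphism $\mathfrak g/\mathfrak p\simeq\mathfrak n_P^\ast$, so that for every Higgs reduction $\sigma\colon U\to {\fR_P(\fE)}_{\vert U}$ with $\operatorname{codim}(X-U)\ge 2$ one gets a canonical isomorphism $\sigma^\ast(T_{E/P,X})\simeq\mathfrak N_\sigma^\ast$ on $U$, where $\mathfrak N_\sigma=F_\sigma\times_P\mathfrak n_P$. Since $\mathfrak n_P$ is an ideal of $\mathfrak p$ and $\sigma$ is a \emph{Higgs} reduction, $\mathfrak N_\sigma$ is a Higgs subsheaf of ${\Ad(\fE)}_{\vert U}$; it and its determinant extend to $X$ by smoothness \cite{RamaSub}. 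Moreover $c_1(\Ad(E))=0$, because the adjoint representation preserves the Killing form, whence $\mu(\Ad(\fE))=0$. Combining this with Definition \ref{Higgsstabhigher}, one sees that \emph{$\fE$ is semistable if and only if $\deg\mathfrak N_\sigma\le 0$ for every such $\sigma$}.

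Given this, the implication ``$\Ad(\fE)$ semistable $\Rightarrow$ $\fE$ semistable'' in (i) is immediate: a semistable Higgs vector bundle of slope $0$ admits no Higgs subsheaf of positive slope, so each $\mathfrak N_\sigma$ (a Higgs subsheaf of positive rank) has $\deg\mathfrak N_\sigma\le 0$. For the converse in (i) I would invoke (ii) with $\rho=\operatorname{Ad}\colon G\to\operatorname{Aut}(\mathfrak g)$: then $\operatorname{Ad}(Z(G)_0)$ is trivial, hence central, so the hypothesis of (ii) holds and $\Ad(\fE)=\fE\times_{\operatorname{Ad}}\mathfrak g$ is semistable once $\fE$ is. The ``$\Leftarrow$'' direction of (ii) is then also formal: if every admissible $\mathfrak V$ is semistable, in particular $\Ad(\fE)$ is, and the easy direction of (i) just established yields that $\fE$ is semistable. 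Thus everything reduces to the ``$\Rightarrow$'' direction of (ii).

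For that last implication I would argue by contradiction, following \cite{RamaRama} in the Higgs category. If $\mathfrak V=\fE\times_\rho V$ is not semistable, consider its Higgs--Harder--Narasimhan filtration — its existence and uniqueness for Higgs sheaves being available from the Appendix and \cite{BG1,BG2,BH}. This filtration is preserved by the Higgs field and, being canonical, is invariant under the $G$-action through $\rho$; using that $\rho(Z(G)_0)$ is central, its pointwise stabilizer is the image under $\rho$ of a \emph{proper} parabolic $P\subset G$, so the filtration defines a Higgs reduction $\sigma\colon U\to{\fR_P(\fE)}_{\vert U}$ over a big open $U$. A slope computation identical to the one in \cite{RamaRama}, expressing $\deg\sigma^\ast(T_{E/P,X})=-\deg\mathfrak N_\sigma$ through the differences of the Harder--Narasimhan slopes of $\mathfrak V$, then forces $\deg\sigma^\ast(T_{E/P,X})<0$, contradicting semistability of $\fE$. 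The main obstacle is precisely this construction: one must check that the Higgs--Harder--Narasimhan filtration of the associated Higgs vector bundle descends to a reduction of the \emph{principal} Higgs bundle which is compatible with $\phi$, i.e.\ takes values in $\fR_P(\fE)$ and not merely in $E/P$; the rest is a transcription of the arguments of \cite{Rama,RamaRama} to the Higgs setting. (An alternative route to the same implication, when one is willing to use that tensor products of semistable Higgs bundles are semistable, is to fix a faithful $\rho_0$, note that every irreducible summand of $\rho$ is a subquotient of tensor powers of $\rho_0$ and $\rho_0^\ast$ by Chevalley's theorem, and track slopes using the centrality hypothesis; this, however, seems no shorter.)
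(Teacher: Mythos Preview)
The paper's own proof of this lemma is simply two citations: part (i) is attributed to \cite[Lemma~4.7]{AnBis} and part (ii) to the argument of \cite[Lemma~1.3]{AAB}. Your proposal, by contrast, sketches an actual proof along the lines of Ramanathan \cite{Rama} and Ramanan--Ramanathan \cite{RamaRama}, which is essentially what those cited papers carry out in the Higgs setting. The logical organisation you give (easy direction of (i) directly; hard direction of (ii) by Harder--Narasimhan; the remaining two implications formally) is sound, and the dictionary between Higgs reductions $\sigma$ and Higgs subsheaves $\mathfrak N_\sigma\subset\Ad(\fE)$ is correct.

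There is, however, a genuine soft spot, and it is not quite where you locate it. You write that the Higgs--Harder--Narasimhan filtration of $\mathfrak V$, ``being canonical, is invariant under the $G$-action through $\rho$; using that $\rho(Z(G)_0)$ is central, its pointwise stabilizer is the image under $\rho$ of a \emph{proper} parabolic $P\subset G$''. Canonicity does give a reduction of $E$ to the subgroup $H=\rho^{-1}(P')\subset G$, where $P'\subset GL(V)$ stabilises the flag; but $H$ need not be parabolic in $G$. That it \emph{is} parabolic is precisely the content of the Kempf/Ramanan--Ramanathan instability theory: one must show that the destabilising flag is defined by a one-parameter subgroup of $G$ (equivalently, is fixed by some Borel of $G$), and this uses the optimal-destabilising argument, not canonicity. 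You flag the ``main obstacle'' as checking that the reduction lands in $\fR_P(\fE)$ rather than in $E/P$; but once $P$ is known to be parabolic in $G$, the Higgs compatibility is automatic, since the Harder--Narasimhan filtration is $\phi$-invariant by construction. So the obstacle lies one step earlier than you indicate. Your alternative route via Chevalley's theorem and tensor products of semistable Higgs bundles (Simpson) is in fact closer to what \cite{AAB} does and avoids this issue.
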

\begin{remark} If $G$ is the general linear group $Gl(n,\C)$,
the first claim  holds true quite trivially: $\fE$ is semistable if and only if the corresponding
Higgs vector bundle $\mathfrak V$ is semistable, and one knows that $\Ad(\fE)\simeq\End(\mathfrak V)$
is semistable if and only if $\mathfrak V$ is.\end{remark}
\begin{proof}
The first claim is Lemma 4.7 of \cite{AnBis}. The second claim is proved as in
Lemma 1.3 of \cite{AAB}.
\end{proof}

\begin{prop} \label{semext}  Let $\lambda\colon G\to G'$ be a homomorphism
of connected reductive algebraic groups which maps the connected component
of the centre of $G$ into the connected component of the centre of $G'$.
If $\fE$ is a semistable principal Higgs $G$-bundle, and $\fE'$ is
obtained by extending the structure group $G$ to $G'$ by $\lambda$, then $\fE'$ is semistable.
\end{prop}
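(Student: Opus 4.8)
The plan is to reduce the statement about principal Higgs bundles to the corresponding statement about Higgs vector bundles, using the characterization of semistability via associated bundles established in Lemma \ref{adjsemi}(ii). First I would recall that, by Lemma \ref{adjsemi}(ii), the semistability of $\fE'$ is equivalent to the semistability of the associated Higgs vector bundle $\fE'\times_{\rho'}V$ for every linear representation $\rho'\colon G'\to\operatorname{Aut}(V)$ with $\rho'(Z(G')_0)$ central in $\operatorname{Aut}(V)$. So fix such a representation $\rho'$ and set $\rho=\rho'\circ\lambda\colon G\to\operatorname{Aut}(V)$. The key observation is that the associated Higgs vector bundle $\fE'\times_{\rho'}V$ is canonically isomorphic, \emph{as a Higgs vector bundle}, to $\fE\times_\rho V$: this follows from the functoriality of the extension-of-structure-group construction for the underlying bundle, together with the fact (discussed in Section \ref{phb}) that the induced Higgs field on an associated vector bundle is compatible with composition of the defining homomorphisms.

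Next I would verify that $\rho$ satisfies the hypothesis of Lemma \ref{adjsemi}(ii), namely that $\rho(Z(G)_0)$ is contained in the centre of $\operatorname{Aut}(V)$. Here is where the assumption on $\lambda$ is used: since $\lambda$ maps $Z(G)_0$ into $Z(G')_0$ (the connected component of the identity being preserved because $\lambda$ is a morphism of algebraic groups and $Z(G)_0$ is connected), we get $\rho(Z(G)_0)=\rho'(\lambda(Z(G)_0))\subseteq\rho'(Z(G')_0)$, which is central in $\operatorname{Aut}(V)$ by the choice of $\rho'$. Therefore, applying Lemma \ref{adjsemi}(ii) to $\fE$ and the representation $\rho$, the semistability of $\fE$ gives that $\fE\times_\rho V$ is a semistable Higgs vector bundle. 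By the isomorphism of the previous step, $\fE'\times_{\rho'}V$ is semistable.

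Since $\rho'$ was an arbitrary representation of $G'$ with $\rho'(Z(G')_0)$ central in $\operatorname{Aut}(V)$, the reverse direction of Lemma \ref{adjsemi}(ii) applied to $\fE'$ yields that $\fE'$ is semistable, as desired. The one point that requires a little care — and which I expect to be the only real obstacle — is the compatibility of the induced Higgs fields under iterated extension of structure group: one must check that the Higgs field produced on $\fE'\times_{\rho'}V$ by first extending $G$ to $G'$ and then applying $\rho'$ coincides with the one produced by applying $\rho=\rho'\circ\lambda$ directly to $\fE$. This is a diagram chase using the description of the induced Higgs field in Section \ref{phb} (combining the pullback of $\phi$ with the Lie-algebra map and the natural pairing into $\Omega^1_X$), and it is genuinely routine, but it is the step on which the whole reduction rests. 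An alternative, even shorter, route avoiding representations altogether would be to use Lemma \ref{adjsemi}(i) together with the fact that $\Ad(\fE')$ contains $\fE\times_{\lambda}\g'$ — equivalently, that $\lambda_\ast\colon\g\to\g'$ induces a morphism $\Ad(\fE)\to\Ad(\fE')$ of Higgs bundles — and then invoke the corresponding semistability-of-adjoint statement for Higgs vector bundles from the Appendix; I would present the representation-theoretic argument above as the main proof since it parallels Lemma 1.3 of \cite{AAB} most closely.
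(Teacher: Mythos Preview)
Your argument is correct. The paper takes essentially the route you sketch as your ``alternative'': rather than checking every representation $\rho'$ of $G'$ and invoking the converse direction of Lemma \ref{adjsemi}(ii), it applies Lemma \ref{adjsemi}(ii) to the single representation $\rho=\Ad_{G'}\circ\lambda\colon G\to\Aut(\g')$, observes that the associated Higgs vector bundle is precisely $\Ad(\fE')$, and then concludes via Lemma \ref{adjsemi}(i). This is marginally more economical --- one representation instead of all of them --- and sidesteps the need to invoke the ``if'' direction of (ii) for $\fE'$. Your main argument has the virtue of being self-contained within part (ii), but the paper's choice makes the role of the adjoint bundle more transparent and shortens the proof to three lines. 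The compatibility-of-Higgs-fields check you flag is needed in either version and is, as you say, routine.
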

\begin{proof} By composing the adjoint representation of $G'$ with the homomorphism
$\lambda$ we obtain a representation $\rho\colon G\to \operatorname{Aut}(\mathfrak g')$; the principal Higgs bundle
obtained by extending the structure group of $\fE$ to $\operatorname{Aut}(\mathfrak g')$ is the bundle of linear
frames of $\Ad(\fE')$ with its natural Higgs field. By Lemma \ref{adjsemi}, this
bundle is semistable, so that $\Ad(\fE')$ is semistable as well. Again by Lemma  \ref{adjsemi},
$\fE'$ is semistable.
\end{proof}

\begin{remark} \label{remSimp} 
A notion of semistability for principal Higgs bundles
was introduced by Simpson in \cite{Si1}.  Let us say that 
a principal Higgs $G$-bundle $\fE$ is \emph{Simpson-semistable} if there exists
a faithful linear representation $\rho\colon G\to\operatorname{Aut}(W)$
such that the associated Higgs vector bundle $\mathfrak W = \fE\times_\rho W$
is semistable.  It is not difficult to show that Simpson-semistability implies
semistability; indeed if $\fE$ is Simpson-semistable, and $\rho$
is a faithful linear representation such that $\mathfrak W$ is semistable,
then $\End(\mathfrak W)$, with its natural Higgs bundle structure, is semistable.
But $\End(\mathfrak W)\simeq \operatorname{Ad}(GL(\mathfrak W))$,
and $ \operatorname{Ad}(E)$ is a subbundle of $\operatorname{Ad}(GL(\mathfrak W))$.
Since both $ \operatorname{Ad}(E)$ and $\operatorname{Ad}(GL(\mathfrak W))$
have vanishing first Chern class, $ \operatorname{Ad}(E)$ is semistable, so that
$E$ is semistable as well.

The contrary is not true, even in the case
of ordinary (non-Higgs) principal bundles (in which case of course our definition coincides with Ramanathan's classical definition of stability for principal bundles \cite{Rama}). Indeed,  if $T$ is a torus in $Gl(n,\C)$, any principal $T$-bundle $E$ is stable. However the vector bundle
associated to it by the natural inclusion $T\hookrightarrow Gl(n,\C)$ (a direct sum of line bundles)
may fail to be semistable, in which case $E$ cannot be Simpson-semistable. (Note indeed that this inclusion, regarded as a linear representation
of $T$, does not satisfy the condition in part (ii) of Lemma \ref{adjsemi} unless $n=1$.) A point in favour of  the definition we choose is that it  is compatible with the Hitchin-Kobayashi correspondence for principal bundles, which states
that a principal $G$-bundle $E$, where $G$ is a connected reductive complex group, is polystable
if and only if it admits a reduction of the structure group to the maximal compact subgroup $K$
of $G$ such that the mean curvature of the  unique connection on $E$ compatible with the reduction takes
values in the centre of the Lie algebra of $K$ \cite{RamaSub}.
(We shall recall the definition of polystability of a principal Higgs bundle in section \ref{flatcon}.)
\end{remark}

We can now prove a version of Miyaoka's semistability criterion
which works for principal Higgs bundles on projective varieties of
any dimension.

\begin{thm}\label{Miyahigher} Let $\fE$ be a  principal Higgs $G$-bundle $\fE=(E,\phi)$ on $X$. Consider the following conditions:
\begin{enumerate} \item
for every parabolic subgroup $P\subset G$ and any nontrivial dominant character $\chi$ of $P$, the line bundle $L_{\chi}^\ast$ restricted to ${\fR}_P(\fE)$ is numerically effective;
\item for every morphism $f\colon C\to X$, where $C$ is a smooth projective curve,  the pullback
 $f^\ast(\fE)$   is semistable.
\item $\fE$ is semistable and $c_2(\Ad(E))=0$ in $H^4(X,\R)$.
\end{enumerate}
Then conditions (i) and (ii) are equivalent, and they are both implied by condition (iii).
\end{thm}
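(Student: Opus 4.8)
The plan is to prove the cycle of implications $\text{(iii)}\Rightarrow\text{(ii)}\Rightarrow\text{(i)}\Rightarrow\text{(ii)}$, or more economically to establish $\text{(i)}\Leftrightarrow\text{(ii)}$ directly and then $\text{(iii)}\Rightarrow\text{(ii)}$.

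\textbf{The equivalence (i) $\Leftrightarrow$ (ii).} First I would prove $\text{(i)}\Rightarrow\text{(ii)}$. Let $f\colon C\to X$ be a morphism from a smooth projective curve. By the compatibility of the scheme of Higgs reductions with base change, $\fR_P(f^\ast\fE)\simeq C\times_X\fR_P(\fE)$, and the line bundle $L_\chi$ for $f^\ast\fE$ is the pullback of $L_\chi$ for $\fE$ along the induced map $\fR_P(f^\ast\fE)\to\fR_P(\fE)$. Since numerical effectiveness is preserved under pullback, condition (i) holds for $f^\ast\fE$ on the curve $C$. By Theorem \ref{princMiya} (the curve case), $f^\ast\fE$ is then semistable, giving (ii). For the reverse $\text{(ii)}\Rightarrow\text{(i)}$: suppose $L_\chi^\ast$ restricted to $\fR_P(\fE)$ is not nef for some $P$ and some nontrivial dominant $\chi$. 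Then there is an irreducible curve $Y\subset\fR_P(\fE)$ with $[Y]\cdot c_1(L_\chi^\ast)<0$. As in the proof of Theorem \ref{princMiya}, dominance of $\chi$ forces $L_\chi^\ast$ to be nef on the fibres of $E/P\to X$, so $Y$ cannot lie in a fibre and hence surjects onto $X$; after normalizing $Y$ and base-changing to a suitable cover $h\colon C\to X$, the curve $Y$ lifts to (a union of) sections $\sigma_j$ of $h^\ast\fR_P(\fE)\to C$. But then (ii) applied to the composite $C\to X$ says $f^\ast\fE$ is semistable, whence by Remark \ref{remRama} / Lemma \ref{RamaHiggs} each $\deg\sigma_j^\ast(L_\chi^\ast)\ge 0$, contradicting $[Y]\cdot c_1(L_\chi^\ast)<0$. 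This is essentially the argument already written for Theorem \ref{princMiya}, transplanted to the higher-dimensional setting, where the only new point is making sure the curve $Y$ and the base change are handled correctly when $X$ has large dimension.

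\textbf{The implication (iii) $\Rightarrow$ (ii).} This is the step I expect to be the main obstacle, since it is where the Higgs analogue of the Bogomolov-type input and the characteristic-class hypothesis $c_2(\Ad(E))=0$ enters. The plan is to reduce to the adjoint Higgs vector bundle: by Lemma \ref{adjsemi}(i), $\fE$ semistable is equivalent to $\Ad(\fE)$ semistable as a Higgs vector bundle, and by hypothesis $c_1(\Ad(E))=0$ (the adjoint bundle always has trivial first Chern class) and $c_2(\Ad(E))=0$. A semistable Higgs vector bundle with vanishing $c_1$ and $c_2$ has the property that its restriction to any curve $f\colon C\to X$ is semistable --- this is the Higgs-bundle version of the classical fact (proved via the Bogomolov inequality / the theory recalled in the Appendix, following \cite{BG1,BG2,BH}) that such bundles behave like numerically flat bundles and in particular stay semistable after pullback to curves. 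Thus $f^\ast\Ad(\fE)=\Ad(f^\ast\fE)$ is semistable on $C$, and by Lemma \ref{adjsemi}(i) again $f^\ast\fE$ is semistable, which is (ii).

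\textbf{Remarks on the hard part.} The genuinely delicate ingredient is the claim that a semistable Higgs vector bundle with $c_1=c_2=0$ pulls back to a semistable Higgs bundle on every curve; I would invoke the results of the Appendix and of \cite{BG1,BG2,BH} for this, where such bundles are identified with (Higgs-)numerically-flat bundles whose semistability is manifestly stable under pullback. A secondary subtlety throughout is bookkeeping around the centre: the characters $\chi$ must be nontrivial and dominant (hence trivial on $Z(G)$) for the fibrewise nefness of $L_\chi^\ast$ to hold, and the passage between $\fE$ and $\Ad(\fE)$ uses precisely that dominant characters are insensitive to the centre, as in Lemma \ref{RamaHiggs}. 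Finally, one should note that (ii) does not obviously imply (iii): semistability after pullback to all curves does not by itself force $c_2(\Ad(E))=0$, which is why the theorem only asserts (iii) $\Rightarrow$ (i), (ii) and not the full three-way equivalence.
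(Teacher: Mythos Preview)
Your proposal is correct and follows essentially the same route as the paper: the equivalence (i)$\Leftrightarrow$(ii) is reduced to the curve case via base-change compatibility of $\fR_P$ and the fibrewise nefness coming from dominance of $\chi$, and (iii)$\Rightarrow$(ii) is obtained by passing to the adjoint Higgs bundle and invoking Theorem~\ref{1.3BHR} together with Lemma~\ref{adjsemi}. The only slip is the phrase ``hence surjects onto $X$'': in the higher-dimensional case the curve $Y\subset\fR_P(\fE)$ need only map finitely onto its image curve $C\subset X$, and it is the (normalization of, or a further cover of) this $C$ to which condition (ii) is applied --- but your subsequent sentences make clear you have the right picture.
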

\begin{proof}
Assume that condition (i) holds, and let $f\colon C\to X$ be as in the statement.
The line bundle $L'_\chi$ on $f^\ast(E)/P$ given by the character $\chi$
is a pullback of $L_\chi$. Then $L'_{\chi\vert  {\fR}_P(f^\ast\fE)}$ is nef,
so that by Theorem \ref{princMiya},  $f^\ast(\fE)$  is semistable. Thus (i) implies (ii).

We show  now that (ii) implies (i). Let $C'$ be a curve in
${\fR}_P(\fE)$. If it is contained in a fibre of the projection $\pi_P\colon {\fR}_P(\fE)\to X$,
since $\chi$ is dominant, we have $c_1(L_\chi^\ast)\cdot[C']\ge 0$. So we
may assume that $C'$ is not in a fibre.    The projection
of $C'$ to $X$ is a finite cover  $\pi_P\colon C'\to C$ to its
image $C$. We may choose a smooth projective curve $C''$ and a
morphism $h\colon C''\to C$ such that $\tilde C=C''\times_CC'$ is
a split unramified covering. Then every sheet $C_j$ of $\tilde C$ is
the image of a section $\sigma_j$ of ${\fR}_P(h^\ast\fE)$. Since
$h^\ast\fE$ is semistable by Lemma \ref{basechange}, we have $\deg
\sigma_j^\ast (L^\ast_\chi)\ge 0$ by Lemma \ref{RamaHiggs}. This
implies (i).

Finally, we prove that (iii) implies (ii). $\Ad(\fE)$ is semistable by  Lemma \ref{adjsemi}; thus,  since $c_2(\Ad(E))=0$, by Theorem \ref{1.3BHR} the Higgs vector bundle
$\Ad(f^\ast(\fE))$ is semistable, and then $f^\ast(\fE)$  is semistable
by Lemma \ref{adjsemi}.
\end{proof}

\begin{remark} \label{cosiva} For non-Higgs principal bundles, one actually proves that condition
(iii) in Theorem \ref{Miyahigher} is equivalent to conditions (i) and (ii)
\cite{BB1}.
\end{remark}

\begin{corol} Assume that $\fE=(E,\phi)$ is a principal Higgs $G$-bundle,
$\lambda\colon G\to G'$ is a surjective group homomorphism, $\fE'=(E',\phi')$ is a principal Higgs $G'$-bundle, and $f\colon E\to E'$ is a $\lambda$-equivariant morphism of principal Higgs bundles.
If $\fE$ satisfies condition (i) or (ii) of Theorem \ref{Miyahigher}, so does $\fE'$.
\end{corol}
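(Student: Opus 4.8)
The plan is to reduce to condition (ii) of Theorem \ref{Miyahigher} and to identify $\fE'$ with the extension of the structure group of $\fE$ along $\lambda$. Since conditions (i) and (ii) are equivalent for every principal Higgs bundle, it is enough to show that if $g^\ast\fE$ is semistable for every morphism $g\colon C\to X$ from a smooth projective curve, then so is $g^\ast\fE'$. First I would note that, because $f\colon E\to E'$ is $\lambda$-equivariant, the assignment $[u,g']\mapsto f(u)\,g'$ is a well-defined morphism of principal $G'$-bundles over $X$ from $E\times_\lambda G'$ to $E'$, hence an isomorphism; since $f$ is moreover a morphism of principal Higgs bundles, this isomorphism carries the Higgs field induced on $E\times_\lambda G'$ onto $\phi'$, so that $\fE'\cong\fE\times_\lambda G'$ as principal Higgs bundles (surjectivity of $\lambda$ is not needed for this). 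As extension of the structure group commutes with pullback, $g^\ast\fE'\cong(g^\ast\fE)\times_\lambda G'$ for every such $g$.

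The main step is then that extension along the surjective homomorphism $\lambda$ preserves semistability on the curve $C$. Here surjectivity is used: it gives $\lambda(Z(G))\subseteq Z(G')$, hence $\lambda(Z(G)_0)\subseteq Z(G')_0$, so the representation $\rho:=\Ad_{G'}\circ\lambda\colon G\to\Aut(\g')$ is trivial on $Z(G)_0$ and in particular satisfies the hypothesis of Lemma \ref{adjsemi}(ii). Assuming $\fE$ satisfies (ii), the Higgs bundle $g^\ast\fE$ is semistable, so by Lemma \ref{adjsemi}(ii) the associated Higgs vector bundle $(g^\ast\fE)\times_\rho\g'$ is semistable; but this bundle is precisely $\Ad\big((g^\ast\fE)\times_\lambda G'\big)=\Ad(g^\ast\fE')$, so $g^\ast\fE'$ is semistable by Lemma \ref{adjsemi}(i). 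Hence $\fE'$ satisfies (ii), and therefore (i) as well. When $G$ and $G'$ are connected one may instead invoke Proposition \ref{semext}, whose hypothesis on the centres is automatic here precisely because $\lambda$ is surjective.

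I expect the only delicate point to be the bookkeeping in the first step --- checking that the canonical identification $E'\cong E\times_\lambda G'$ genuinely intertwines $\phi'$ with the Higgs field induced on the extension --- which is routine given the constructions recalled in Section \ref{phb}; everything afterwards is a direct application of Lemma \ref{adjsemi} (or Proposition \ref{semext}) together with the compatibility of extension of the structure group with pullback, so no serious obstacle is anticipated.
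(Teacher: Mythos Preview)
Your proof is correct but follows a different route from the paper's. The paper argues directly with condition (i): given a parabolic $P'\subset G'$ and a dominant character $\chi'$ of $P'$, surjectivity of $\lambda$ lets one write $P'=\lambda(P)$ for a parabolic $P\subset G$ and pull $\chi'$ back to a dominant character $\chi=\chi'\circ\lambda$ of $P$; the induced morphism $E/P\to E'/P'$ carries $\fR_P(\fE)$ onto $\fR_{P'}(\fE')$ and pulls $L_{\chi'}^\ast$ back to $L_\chi^\ast$, so nefness of the latter together with the fact that nefness descends along surjections (Fujita) yields nefness of $L_{\chi'}^\ast$ on $\fR_{P'}(\fE')$. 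You instead pass to condition (ii), recognise $\fE'$ as the extension $\fE\times_\lambda G'$, and invoke Lemma \ref{adjsemi} (or Proposition \ref{semext}) on each curve, the hypothesis on centres being guaranteed precisely by the surjectivity of $\lambda$. Your approach is shorter and sidesteps both the functoriality of the Higgs reduction scheme and the surjectivity of $\fR_P(\fE)\to\fR_{P'}(\fE')$; the paper's approach, by contrast, stays within the geometric framework of condition (i) and showcases how the scheme of Higgs reductions behaves under equivariant morphisms.
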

\begin{proof} If $P'$ is a parabolic subgroup of $G'$, then $P'=\lambda(P)$ for
a parabolic $P$ in $G$. If $\chi'\colon P'\to \C^\ast$ is a dominant character of $P'$,
the composition $\chi=\chi'\circ\lambda$ is a dominant character of $P$. If $f\colon E/P\to E'/P'$
is the induced morphism, we know that $f({\fR}_P(\fE))\subset {\fR}_{P'}(\fE')$, so that
$f^\ast(L^\ast_{\chi'\vert {\fR}_{P'}(\fE')})\simeq L^\ast_{\chi\vert {\fR}_P(\fE)}$. Since
$L^\ast_{\chi\vert {\fR}_P(\fE)}$ is nef, and $f\colon  {\fR}_P(\fE)\to {\fR}_{P'}(\fE')$ is surjective,
$L^\ast_{\chi'\vert {\fR}_{P'}(\fE')}$ is nef as well \cite{Fu}.
\end{proof}

In \cite{BG1} we introduced a notion of numerically flat Higgs vector
bundle (see also Section \ref{appendix} of this paper). A special class of semistable principal Higgs bundles provides examples of such bundles.

\begin{thm}\label{AdHnflat} Let $\fE$ be a  principal Higgs bundle $\fE=(E,\phi)$ on a polarized smooth complex projective variety $X$.  If $\fE$ is semistable and $c_2(\Ad(E))=0$ in $H^4(X,\R)$, then 
 the adjoint Higgs bundle $\Ad(\fE)$ is H-nflat.
 \end{thm}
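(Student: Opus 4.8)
The strategy is to reduce the statement to the corresponding fact about Higgs vector bundles, for which the relevant notion of numerical flatness (``H-nflat'') has already been developed in \cite{BG1} and is recalled in Section \ref{appendix}. The key observation is that $\Ad(\fE)$ is, by definition, a Higgs vector bundle with vanishing first Chern class (its structure group sits inside $SL$, or more precisely the Killing form gives a reduction to a group with $c_1=0$), and we are assuming in addition that $c_2(\Ad(E))=0$ in $H^4(X,\R)$. So the Chern classes of $\Ad(\fE)$ vanish up to degree $4$ in real cohomology. The plan is then to invoke the semistability of $\Ad(\fE)$ together with the vanishing of these Chern classes to conclude H-nflatness.

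\textbf{Key steps.} First I would record that $\fE$ semistable implies, by Lemma \ref{adjsemi}(i), that the Higgs vector bundle $\Ad(\fE)$ is semistable. Second, I would note that $c_1(\Ad(E))=0$: the adjoint bundle carries a nondegenerate symmetric pairing (induced by the Killing form on the semisimple part, extended suitably on the centre), equivalently $\Ad(E)\cong\Ad(E)^\ast$, forcing $2c_1(\Ad(E))=0$, hence $c_1(\Ad(E))=0$ in $H^2(X,\R)$. Combined with the hypothesis $c_2(\Ad(E))=0$ in $H^4(X,\R)$, the discriminant $\Delta(\Ad(E))=2\,\rk\,c_2-(\rk-1)c_1^2$ vanishes as well. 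Third, I would appeal to the characterization of H-nflat Higgs vector bundles proved in \cite{BG1} (see Section \ref{appendix}), namely that a Higgs vector bundle is H-nflat if and only if it is semistable and has vanishing first and second Chern classes in real cohomology — equivalently, semistable with vanishing Chern character in degrees $1$ and $2$. Since $\Ad(\fE)$ satisfies precisely these conditions, it is H-nflat.

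\textbf{Alternative route and main obstacle.} An essentially equivalent argument, closer in spirit to the proof of Theorem \ref{Miyahigher}(iii)$\Rightarrow$(ii) just given, is to first apply Theorem \ref{1.3BHR} from the Appendix: a semistable Higgs vector bundle with $c_1=0$ and $c_2=0$ in real cohomology remains semistable after pullback to any curve, and more precisely has nef and conef-type positivity properties; then one translates this into the statement that $\Ad(\fE)$ is H-nflat using the curve-by-curve criterion for H-nflatness. The main obstacle, and the point requiring care, is verifying that $c_1(\Ad(E))=0$ and that the precise form of the H-nflatness criterion in \cite{BG1} matches the hypotheses we have — in particular that it is enough to control $c_1$ and $c_2$ of $\Ad(E)$ (rather than all Chern classes), which is exactly the content of the Bogomolov-type inequality argument for semistable Higgs bundles recalled in the Appendix. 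Once the Chern class bookkeeping is in place, the conclusion is immediate from the cited results.
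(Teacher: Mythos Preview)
Your proposal is correct and follows essentially the same route as the paper: pass to the adjoint Higgs vector bundle via Lemma~\ref{adjsemi}, use $c_1(\Ad(E))=0$, and then invoke the Appendix results (Theorem~\ref{1.3BHR} together with Lemma~\ref{curverestrHnef}) to conclude H-nflatness. The paper organizes the argument slightly differently---it first treats the curve case directly and then, for $\dim X>1$, uses Theorem~\ref{Miyahigher} (iii)$\Rightarrow$(ii) to restrict to curves---but this is just a repackaging of the same chain of implications you describe in your ``alternative route.'' One minor caveat: the ``if and only if'' characterization you attribute to \cite{BG1} is not established in the Higgs case (cf.\ the closing Remark of Section~\ref{appendix}); only the implication you actually need (semistable with $c_1=c_2=0$ $\Rightarrow$ H-nflat) is available, but that suffices.
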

\begin{proof} At first we prove this theorem when $X$ is a curve. In this case actually we can prove that   $\fE$ is semistable
if and only if $\Ad(\fE)$ is H-nflat. In view of Lemma \ref{adjsemi}, this amounts to proving that $\Ad(\fE)$ is semistable
if and only if it is H-nflat. Since $c_1(\Ad(E))=0$ this holds true 
(Lemma \ref{curverestrHnef} and Proposition \ref{flatisss}, see also
\cite{BG1}, Corollaries 3.4 and 3.6).

Let us assume now that $\dim(X)>1$. If condition (i) holds, then $\fE_{\vert C}$ is semistable for any embedded curve
$C$ (as usual, if $C$ is not smooth one replaces it with its normalization). Thus $\Ad(\fE)_{\vert C}$ is
semistable, hence H-nflat. But this implies that $\Ad(\fE)$ is H-nflat as well.
\end{proof}

 \begin{remark} \label{remBB} For non-Higgs principal bundles, one is able to prove that
 the two conditions in the statement of Theorem \ref {AdHnflat} are equivalent \cite{BB2}.
  This characterization shows that the numerically flat principal $G$-bundles defined in \cite{BS} for semisimple structure groups $G$ are no more than the class of principal bundles singled out by one of the   conditions of Theorem \ref{Miyahigher}; cf.~\cite[Thm.~2.5]{BS},
and Propositions \ref{nflatss} and \ref{Hconv}.
 \end{remark}

\bigskip\section{Numerically effective principal (Higgs) bundles}\label{Hnflat}
In this section we wish to give a definition of numerical effectiveness and numerical flatness for principal (Higgs)  bundles on a complex projective manifold $X$, and prove its main properties.

We start with some group-theoretic considerations. Given  a complex reductive algebraic group $G$, let $P\subset G$ be a  parabolic subgroup, and $R_u(P)$ the unipotent radical of $P$. A subgroup $L$ of $G$ such that $L\simeq P/R_u(P)$, and $P$
is a semidirect product $P=LR_u(P)$, is called a {\em Levi factor} of $P$. All Levi factors
are conjugated by elements of $R_u(P)$, and are reductive algebraic groups,
whose root system is in general reducible; hence a Levi factor $L$ may be written
as $L=L_1\cdots L_m$ according to the decomposition of its root system  \cite[Sect.~27.5]{Hump}. 

Now let $\rho\colon G\to Gl(V)$ be a faithful rational representation,
let $W$ be a subspace of $V$, and let $P$ be a  maximal parabolic subgroup 
of $G$ which stabilizes $W$. There is an induced action of $P$ on $V/W$.
 A factor $L_i$ of the Levi group
of $P$ is said to be a \emph{standard quotient} of $P$ if
$\rho$ maps it injectively into $Gl(V/W)$ for some choice of $\rho$ and $W$.

We may now define a notion of universal quotient bundle of  a principal Higgs bundle.
Let $\fE=(E,\phi)$ be a principal Higgs $G$-bundle on a projective manifold $X$. For any closed subgroup
$K\subset G$, denote by $E_K$ the principal $K$-bundle $E \to E/K$.
(Recall that the restriction of $E_K$ to the scheme of   Higgs reductions $\fR_{K}\subset E/K$
carries an induced Higgs field, cf.~Remark \ref{inducedHiggs}, thus giving rise
to a principal Higgs $K$-bundle $\fE_K$). 
If $P\subset G$ is a parabolic subgroup, and $\psi\colon P\to Q$
the projection onto a standard quotient, we call $E_Q$ the principal $Q$-bundle obtained by extending the structure group of $E_P$ to $Q$.

\begin{defin} A universal Higgs quotient   $\fE_Q$ of $\fE$ is the restriction
of $E_Q$ to the scheme of Higgs reductions $\fR_{P}(\fE)\subset E/P$,
equipped with the Higgs field induced by the Higgs field of $\fE_{P}$.
Here $P$ is a maximal parabolic subgroup of $G$, and $Q$ is a standard quotient of $P$.
\end{defin}

\begin{remark}
\label{motiv} The motivation for this definition is as follows. If $G$ is the general linear group 
$Gl(V)$, where $V$ is a complex finite-dimensional vector space, a maximal parabolic subgroup $P$ in $G$    stabilizes   a subspace $W\subset V$.  Then 
a standard quotient of $P$ is isomorphic to the group $Gl(V/W)$. If $U$ is a vector bundle on a variety $X$, and $E$ is the bundle
of linear frames of $U$,    the principal $Q$-bundle obtained by extending the structure group of $E_P$ to $Q$ is the bundle of linear frames of the universal rank $k$ quotient bundle on the Grassmannian bundle $E/P$, where $k=\dim(V/W)$.
\end{remark}

\begin{remark}\label{funct} Note that this construction is functorial: if $f\colon Y\to X$ is a morphism
of projective manifolds, then $(f^\ast\fE)_Q\simeq\bar f^\ast \fE_Q$, where
$\bar f\colon\fR_{P}(f^\ast\fE) \to \fR_{P}(\fE) $ is the morphism induced by $f$.
\end{remark}

We give now our definition of numerical effectiveness. This will be a recursive
definition, with recursion on the semisimple rank of the structure group, and we start by defining numerical effectiveness for what will be the ``terminal" case,
i.e., principal Higgs $T$-bundles, where $T$ is an algebraic torus. 

\begin{defin} \label{neftori} Let $\fE=(E,\phi)$ be a principal Higgs $T$-bundle, with $\dim T = r$.
\begin{enumerate} \item  $\fE$  is \emph{Higgs-numerically effective} (H-nef for short) if  there exists an isomorphism $\lambda\colon T \to (\C^\ast)^r$
such that the vector bundle associated to $E$ via $\lambda$ is nef.
\item $\fE$ is  \emph{Higgs-numerically flat} (H-nflat for short) if  there exists an isomorphism $\lambda \colon T \to (\C^\ast)^r$
such that the vector bundle $V_\lambda$ associated to $E$ via $\lambda$ is numerically flat, i.e.,
both $V_\lambda$ and $V_\lambda^\ast$ are numerically effective.
\end{enumerate}
\end{defin}
Higgs-numerical flatness can be equivalently   defined by asking that the vector bundle associated to $E$ via
\emph{any} isomorphism $ T \to (\C^\ast)^r$ is numerically flat. Note that these definitions are independent
of the Higgs field.

 Let $D(G)$ be the derived subgroup of $G$. The quotient
 $R'=G/D(G)$ is isomorphic to the quotient of the radical $R$ of $G$  by  a finite subgroup, and is  therefore isomorphic to $R$. 
Let $\rad\colon G \to  R$ be the   projection.
\begin{defin} The radical  of a principal Higgs $G$-bundle $\fE$ is the principal Higgs $R$-bundle $R(\fE)=\fE\times_{\rad}R\simeq \fE/D(G)$.
\end{defin}
If $\fE$ is the bundle of linear frames of a Higgs vector bundle $\mathfrak V=(V,\phi)$,
then  $R(\fE)$ is the bundle of linear frames of the determinant line bundle  $\det(V)$ equipped with the induced  Higgs field $\det(\phi)$.

\begin{prop} The radical   $R(\fE)$ of a principal Higgs $G$-bundle $\fE$ is trivial
(as a principal Higgs bundle, see Definition \ref{trivial})
if and only if $\fE$ admits a Higgs reduction of its structure group to its derived subgroup
$D(G)$.  
\end{prop}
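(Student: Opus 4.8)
The plan is to use the dictionary recalled in Section~\ref{Sec:2} between reductions of the structure group of $E$ to a closed subgroup and sections of the corresponding quotient bundle, applied to the subgroup $K=D(G)$, together with the observation that the ``Higgs reduction'' condition for such a $\sigma$ is exactly the vanishing of the Higgs field induced on $R(\fE)$. Write $\mathfrak{d}=[\g,\g]$ for the Lie algebra of $D(G)$ and $\mathfrak{r}$ for that of $R$. Since $G$ is reductive, $\g=\mathfrak{r}\oplus\mathfrak{d}$ with $\mathfrak{r}$ the centre of $\g$; hence $\rad_\ast\colon\g\to\mathfrak{r}$ is the projection with kernel $\mathfrak{d}$, the homomorphism $\rad$ is trivial on $D(G)$, and the adjoint action of $G$ (in particular of $D(G)$) on $\g/\mathfrak{d}\simeq\mathfrak{r}$ is trivial.

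Suppose first that $\fE$ admits a Higgs reduction $\sigma$ of its structure group to $D(G)$, with reduced bundle $F_\sigma$ and $\phi\in\ker\Pi_\sigma$. Viewing $\sigma$ as a section of the principal $R$-bundle $E/D(G)\simeq R(\fE)$, the standard fact that a principal bundle with a global section is trivial shows that the underlying principal $R$-bundle of $R(\fE)$ is trivial. For the Higgs field: since $E\simeq F_\sigma\times_{D(G)}G$, one has $\Ad(E)\simeq F_\sigma\times_{D(G)}\g$ and $\Ad(F_\sigma)\simeq F_\sigma\times_{D(G)}\mathfrak{d}$, so $\Ad(E)/\Ad(F_\sigma)\simeq F_\sigma\times_{D(G)}(\g/\mathfrak{d})$; as $D(G)$ acts trivially on $\g/\mathfrak{d}\simeq\mathfrak{r}$, this is canonically the trivial bundle $\mathfrak{r}\otimes\Oc_X$, and under this identification $\Pi_\sigma$ becomes $\rad_\ast\otimes\id$. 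But $\rad_\ast\otimes\id$ applied to $\phi$ is, by definition, the Higgs field of $R(\fE)=\fE\times_{\rad}R$ (note $\Ad(R(\fE))$ is trivial since $R$ is abelian). Hence this Higgs field equals $\Pi_\sigma(\phi)=0$, and $R(\fE)$ is trivial as a principal Higgs bundle in the sense of Definition~\ref{trivial}.

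Conversely, assume $R(\fE)$ is trivial as a principal Higgs bundle. Triviality of its underlying $R$-bundle gives a global section $s\colon X\to R(\fE)\simeq E/D(G)$, which by the correspondence of Section~\ref{Sec:2} is a reduction $\sigma$ of the structure group of $E$ to $D(G)$. Running the identification of the previous paragraph in the other direction, $\Pi_\sigma(\phi)=(\rad_\ast\otimes\id)(\phi)$ is the Higgs field of $R(\fE)$, which is zero by hypothesis; therefore $\phi\in\ker\Pi_\sigma$, i.e.\ $\sigma$ is a Higgs reduction, which completes the proof.

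The one step that needs genuine care --- and the main (if modest) obstacle --- is the canonical identification of $\Ad(E)/\Ad(F_\sigma)$ with the trivial bundle $\mathfrak{r}\otimes\Oc_X$ so that the projection $\Pi_\sigma$ matches the homomorphism $\rad_\ast\otimes\id$ used to build the Higgs field of $R(\fE)$. This relies on $\mathfrak{r}$ being the centre of $\g$, whence the $G$-action on $\g/\mathfrak{d}$ is trivial, and on the compatibility of the associated-bundle construction with the reduction morphism $F_\sigma\to E$; once this is set up, both implications are formal consequences of the reductions/sections dictionary.
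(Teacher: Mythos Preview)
Your proof is correct and follows essentially the same approach as the paper: both use the reductions/sections dictionary applied to $K=D(G)$, and identify the Higgs-reduction condition $\phi\in\ker\Pi_\sigma$ with the vanishing of the Higgs field on $R(\fE)$ via the projection $\rad_\ast\colon\g\to\mathfrak r$. You simply spell out in more detail the identification $\Ad(E)/\Ad(F_\sigma)\simeq\mathfrak r\otimes\Oc_X$ that the paper leaves implicit.
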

\begin{proof} If $R(\fE)$ is trivial, the principal $R$-bundle $E/D(G)$ is trivial,
so that the structure group of $E$ may be reduced to $D(G)$; let us denote by $E'$
the reduced bundle.  Since the Higgs field
of  $R(\fE)$ is zero, the Higgs field $\phi$ of $\fE$ is actually a section of $\Ad(E')\otimes\Omega_X^1$,
so that $\fE'=(E',\phi)$ is a Higgs reduction of the structure group of
$\fE$ to $D(G)$.

Conversely, if such a reduction exists, $R(E)=E/D(G)$ is trivial as it has a global section, and since 
$\phi$ lies in $\Gamma(\Ad(E')\otimes\Omega_X^1)$, the Higgs field of $R(\fE)$ vanishes.
\end{proof}

\begin{defin} \label{P-Hnef} A principal Higgs $G$-bundle $\mathfrak E$ on $X$ is H-nef  if

\begin{enumerate}\item  $R(\fE)$ is H-nef  according to Definition \ref{neftori};

\item  if  $\rk_{ss}(G)>0$, for every   maximal parabolic subgroup $P$ 
and every standard quotient $Q$ of $P$, the 
universal Higgs quotient $\fE_Q$ is H-nef.
\end{enumerate}

Moreover, $\fE$ is said to be H-nflat  if it
is H-nef and $R(\fE)$ is H-nflat. \end{defin}

Since the semisimple rank of the structure group $Q$ of $\fE_Q$ is strictly smaller
than the semisimple rank of $G$, this recursive definition makes sense.
As far as we know, this definition is new even in the case of (non-Higgs) principal bundles.

\begin{remark} \label{tworem} (i) If $\fE$ is the bundle of linear frames of a Higgs vector bundle
$\mathfrak V$, then, in view of Remark \ref{motiv}, it is H-nef (H-nflat) if and only if 
$\mathfrak V$ is H-nef (H-nflat) in the sense of Definition \ref{moddef}. As a further particular case,
when  the Higgs field is zero, so that we are dealing with an ordinary principal $Gl(n,\C)$-bundle, the latter is nef in this sense if and only if the associated
vector bundle is nef in the usual way.

(ii) Definition \ref{P-Hnef} implies that a principal Higgs $G$-bundle is H-nef 
if and only if $f^\ast\fE$ is H-nef for all morphisms $f\colon C \to X$ where $C$ is a smooth algebraic
curve. 
\end{remark}

We prove some basic properties of H-nef principal Higgs bundles.

\begin{prop}\label{prop} {\rm (i)} The pullback of an H-nef principal Higgs bundle is H-nef.

{\rm (ii)} A trivial   Higgs  $G$-bundle is H-nflat.
\end{prop}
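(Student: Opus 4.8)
Both statements will be proved by induction on the semisimple rank $\rk_{ss}(G)$ of the structure group, the terminal case being a principal Higgs $T$-bundle with $T$ an algebraic torus, where everything reduces to the elementary facts that the pullback of a nef vector bundle is nef and that the trivial vector bundle $\cO_X^{\oplus r}$ is numerically flat. For (i), let $g\colon Y\to X$ be a morphism and $\fE$ an H-nef principal Higgs $G$-bundle. If $\rk_{ss}(G)=0$ then $G$ equals its radical $R$, a torus, so $R(\fE)=\fE$ and H-nefness means that the vector bundle $V_\lambda$ associated to $E$ via some isomorphism $R\simeq(\C^\ast)^r$ is nef; then $g^\ast V_\lambda$ is nef and $g^\ast\fE$ is H-nef. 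In general the associated-bundle construction commutes with pullback, so $R(g^\ast\fE)\simeq g^\ast R(\fE)$ is H-nef by the torus case; and for every maximal parabolic $P\subset G$ and every standard quotient $Q$ of $P$, the functoriality of the universal Higgs quotient (Remark \ref{funct}) gives $(g^\ast\fE)_Q\simeq\bar g^\ast\fE_Q$, where $\bar g\colon\fR_P(g^\ast\fE)\to\fR_P(\fE)$ is induced by $g$. Since $\rk_{ss}(Q)<\rk_{ss}(G)$ and $\fE_Q$ is H-nef, the inductive hypothesis gives that $(g^\ast\fE)_Q$ is H-nef, and hence $g^\ast\fE$ is H-nef by Definition \ref{P-Hnef}.

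For (ii), let $\fE=(X\times G,0)$ be trivial. Its radical $R(\fE)$ is the trivial principal $R$-bundle; since $R$ is a torus, the vector bundle associated to it via any isomorphism $R\simeq(\C^\ast)^r$ is $\cO_X^{\oplus r}$, which is numerically flat, so $R(\fE)$ is H-nflat, in particular H-nef, and we are done if $\rk_{ss}(G)=0$. If $\rk_{ss}(G)>0$, fix a maximal parabolic $P$ and a standard quotient $Q$. Since $\phi=0$ we have $\fR_P(\fE)=E/P=X\times(G/P)$, and by the functoriality of the construction (Remark \ref{funct}) $\fE_Q\simeq\operatorname{pr}_2^\ast\fF$, where $\operatorname{pr}_2\colon X\times(G/P)\to G/P$ is the projection and $\fF=G\times_\psi Q$ is the homogeneous principal $Q$-bundle on the flag variety $G/P$, with vanishing Higgs field. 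By part (i), once we know $\fF$ is H-nef it follows that $\fE_Q$ is H-nef for all $P$ and $Q$, hence $\fE$ is H-nef by Definition \ref{P-Hnef}; combined with the H-nflatness of $R(\fE)$ this shows $\fE$ is H-nflat. So it all comes down to showing that the homogeneous bundle $\fF=G\times_\psi Q\to G/P$ is H-nef. For $G=Gl(n,\C)$ this is immediate: $Q$ is the general linear group of $V/W$, $\fF$ is the bundle of linear frames of the universal rank-$k$ quotient bundle on $G/P=\grass_k(\C^n)$, which is a quotient of $\cO^{\oplus n}$, hence globally generated and nef, so $\fF$ is H-nef by Remark \ref{tworem}(i). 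For general $G$ I would check it recursively through Definition \ref{P-Hnef}: the radical $R(\fF)$ and the iterated universal quotients $(\fF)_{Q'}$ are themselves homogeneous bundles on iterated flag bundles over $G/P$, the recursion terminating in line bundles associated to characters of parabolic subgroups produced by the standard-quotient construction.

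I expect the main obstacle to be precisely the base case of this last recursion: verifying, for a general reductive $G$, that every line bundle occurring at the bottom is nef --- equivalently, that the characters produced by the standard-quotient construction are dominant, so that by Borel--Weil the associated line bundles on $G/P$ (and on the iterated flag bundles) are globally generated. This is transparent for $Gl(n,\C)$ via global generation of the universal quotient bundle on a Grassmannian, but in general it requires making the bookkeeping of standard quotients explicit. An alternative route, avoiding this bookkeeping, is to extend the structure group of $\fF$ to the general linear group of $V/W$ coming from the defining faithful representation, so that $\fF$ becomes the bundle of linear frames of the (globally generated, hence nef) universal quotient bundle on $\grass_k(V)$ restricted to $G/P$, and then invoke the equivalence between H-nefness of a principal bundle and nefness of the vector bundle associated to a suitable faithful representation (Propositions \ref{nflatss} and \ref{Hconv}).
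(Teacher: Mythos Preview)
Your approach coincides with the paper's. For (i) the paper simply cites functoriality (Remark~\ref{funct}), which is your induction in compressed form; for (ii) it too reduces everything to the H-nefness of the homogeneous bundle $\fF=G\times_\psi Q$ on $G/P$, and handles this by an induction on $\rk_{ss}(G)$ packaged as a preliminary lemma that the \emph{trivial} $G$-bundle $\fE_G$ on $G/P$ is H-nef --- its universal quotients $(\fE_G)_{Q'}$ being pullbacks of the various $G\times_{\psi'}Q'$ from $G/P'$, which the paper declares H-nef ``by the inductive hypothesis'' and then pulls back via part (i).

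The obstacle you isolate --- verifying that the line bundles appearing at the terminal (torus) stage of the recursion are nef, equivalently that the characters produced by iterated standard quotients are dominant --- receives no more explicit treatment in the paper than in your sketch: the paper's appeal to the inductive hypothesis is doing exactly the same work as your ``I would check it recursively through Definition~\ref{P-Hnef}''. So your diagnosis is accurate, but you will not find a more detailed resolution in the paper's own argument. Your alternative route via a faithful embedding into $Gl(V)$ and Propositions~\ref{nflatss}--\ref{Hconv} is not what the paper does and would be logically awkward here, since those propositions come after the present one and (in the Higgs case) give only one implication.
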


\begin{proof} Point (i) follows immediately from Remark \ref{funct}, or from Remark
\ref{tworem}(ii). The proof  of point (ii)  needs   the following preliminary result.

Let $G$ be a reductive linear algebraic group, $P\subset G$ a
maximal parabolic subgroup, and let $E_G$ be the   principal $G$ bundle
over $G/P$ obtained by extending the structure group of the principal $P$-bundle
$G\to G/P$ to $G$ via the inclusion $P\to G$.  One easily checks that $E_G$ is trivial. Let $\fE_G$ be $E_G$ equipped with the trivial Higgs field.
Then $\fE_G$ is H-nef.

 We prove this by induction the semisimple rank of  $G$. If
$ \rk_{ss}(G)=0$, then  $\fE_G$  is the bundle of linear frames of a trivial Higgs vector bundle on $G/P$, so that it is H-nef (cf.~Remark \ref{tworem}(i)).

If $ \rk_{ss}(G)>0$, 
we first prove that $R(\fE_G)$ is H-nef. Let $\chi\colon R(G)\to\C^\ast$ be a character
of the radical of $G$. The associated Higgs $\C^\ast$-bundle is trivial,
hence H-nef by Remark \ref{tworem}(i), and then $R(\fE_G)$ is H-nef.   

The inductive step is used to prove that the universal Higgs quotients of  $\fE_G$ are
H-nef. Let $P'\subset G$ be a maximal parabolic subgroup of $G$, and let $\psi'\colon
P'\to Q'$ be the projection onto a standard quotient. The associated universal
principal Higgs quotient is the pullback of the universal quotient $G\times_{\psi'}Q'$
via the projection $G/P\times G/P'\to G/P'$ (with the zero Higgs field). Now, $G\times_{\psi'}Q'$ is H-nef
by the inductive hypothesis, and its pullback is H-nef due to point (i) of this
Proposition. So we have proved the inductive step.

Now we go back to the proof of point (ii).  If $\fE=X\times G\to X$ with trivial Higgs field, then
$R(\fE)\simeq X \times R(G)$ is the bundle of linear frames of
a trivial Higgs vector bundle on $X$, so that it is H-nflat. Moreover,
let $P\subset G$ be a parabolic subgroup, and $Q$ its standard quotient.
Then the associated universal quotient of $\fE_G$ is the pullback
of the universal quotient of the bundle $G \to G/P$ via the projection $X\times G/P\to G/P$,
hence is H-nef due to point (i) and to the result we have previously proved. Thus $\fE$ is H-nef, and since $R(\fE)$ is H-nflat, $\fE$ is also H-nflat.
\end{proof}

Numerically flat principal Higgs bundles turn out to be semistable.

\begin{prop} \label{nflatss} An H-nflat principal Higgs $G$-bundle $\fE$ is semistable.\end{prop}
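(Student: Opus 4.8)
The plan is to reduce the statement to the characterization of semistability in terms of dominant characters (Lemma \ref{RamaHiggs} and Remark \ref{remRama}), exploiting the recursive structure of Definition \ref{P-Hnef}. First I would observe that, by Remark \ref{remRama}, it suffices to show that for every proper parabolic subgroup $P\subset G$, every nontrivial dominant character $\chi$ of $P$, every open dense $U\subset X$ with $\operatorname{codim}(X-U)\ge 2$, and every Higgs reduction $\sigma\colon U\to {{\fR}_P(\fE)}_{\vert U}$, one has $\deg\sigma^\ast(L_\chi^\ast)\ge 0$. Since a dominant character of $P$ is a nonnegative linear combination of dominant characters of the maximal parabolic subgroups containing $P$ (as in the proof of Lemma \ref{RamaHiggs}), and a Higgs reduction to $P$ induces Higgs reductions to each such maximal parabolic, we may assume $P$ is a maximal parabolic subgroup, and then $\chi$ is (a positive multiple of) the fundamental weight attached to the corresponding simple root.

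Next I would bring in the standard quotient. Let $Q$ be a standard quotient of $P$ with projection $\psi\colon P\to Q$; the dominant character $\chi$ of $P$, being trivial on $R_u(P)$ and on the other factors of the Levi, descends to (a multiple of) a dominant character $\bar\chi$ of $Q$, with $L_\chi\simeq (\text{pullback of } L_{\bar\chi})$ under the bundle map $\fR_P(\fE)\to$ (total space over which $\fE_Q$ lives). The Higgs reduction $\sigma$ of $\fE$ to $P$ over $U$ determines a section of the $Q$-bundle $\fE_Q$ pulled back along $\sigma$, and $\sigma^\ast L_\chi^\ast$ is identified with the pullback of the corresponding line bundle $L_{\bar\chi}^\ast$ on $\fE_Q$. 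By hypothesis $\fE$ is H-nflat, so in particular $\fE_Q$ is H-nef; I would then want to invoke the inductive hypothesis — the semisimple rank of $Q$ being strictly smaller than that of $G$ — to the effect that H-nef (indeed H-nflat, since the radical part is also H-nflat by the way H-nflatness propagates) principal Higgs $Q$-bundles have all the line bundles $L_{\bar\chi}^\ast$ nef on the relevant scheme of Higgs reductions; pulling back a nef line bundle keeps nonnegative degree on any curve, hence $\deg\sigma^\ast L_\chi^\ast\ge 0$. The base of the recursion is the torus case: if $\rk_{ss}(G)=0$ then $G=R(G)$ is a torus, $\fE=R(\fE)$ is H-nflat, i.e. the associated vector bundle $V_\lambda$ under an isomorphism $T\to(\C^\ast)^r$ is numerically flat, and then every character of $T$ gives a line bundle that is a summand (tensor combination) of a numerically flat bundle, hence numerically flat, hence of degree zero on every curve — so the semistability inequalities hold trivially (there being no proper parabolic, they are in fact vacuous, but the statement is still true).

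I expect the main obstacle to be bookkeeping rather than a deep difficulty: precisely matching $\sigma^\ast(L_\chi^\ast)$ on $\fR_P(\fE)$ with the pullback of the universal-quotient line bundle on $\fE_Q$, i.e. checking that the character $\chi$ really does factor through the chosen standard quotient $Q$ and that the associated line bundles correspond under the extension-of-structure-group map $E_P\to E_Q$. This is the content of the group-theoretic setup around Remark \ref{motiv}, and it must be done carefully for a general (not necessarily $GL_n$) reductive $G$, including the passage from an arbitrary dominant character to one attached to a single maximal parabolic and a single Levi factor. Once that identification is in hand, the curve-by-curve criterion from Remark \ref{tworem}(ii) and the nefness of the $L_{\bar\chi}^\ast$ on curves (which reduces, after pulling back to a curve, to Theorem \ref{princMiya} applied to $\fE_Q$) close the argument. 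A minor additional point is to handle the higher-dimensional setup: one restricts to curves, uses that H-nef is detected on curves by Remark \ref{tworem}(ii), and uses that a line bundle on $U$ with $\operatorname{codim}(X-U)\ge2$ extends uniquely so degrees make sense; then the curve case of the criterion applies verbatim.
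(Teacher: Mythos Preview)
Your overall strategy---reduce to maximal parabolics, pass to the standard quotient $Q$, and exploit the H-nefness of the universal Higgs quotient $\fE_Q$ to get nefness of the relevant line bundle---matches the paper's. But there is a genuine gap at the point you yourself flag as ``bookkeeping'': the dominant character $\chi$ of $P$ does \emph{not} in general factor through the projection $\psi\colon P\to Q$. Your claim that $\chi$ is ``trivial on the other factors of the Levi'' is false already for $G=GL_n$: with $P$ the stabilizer of a $k$-plane, the Levi is $GL_k\times GL_{n-k}$, $Q=GL_{n-k}$, and the fundamental weight $\lambda_k$ (normalized to be trivial on the centre) involves both $\det$ of the $GL_k$ block and $\det$ of the $GL_{n-k}$ block. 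What is true is a relation $m_1\chi = m_2\chi' + \chi_0$ in the character lattice, where $\chi'=\chi_Q\circ\psi$ comes from a character $\chi_Q$ of $Q$ and $\chi_0$ is a character of the centre of $G$; this is forced by $\operatorname{Pic}(G/P)\simeq\Z$ for maximal $P$. The paper uses exactly this: one picks $\chi_Q$ so that $L^\ast_{\chi_Q}$ is nef on $\fR_P(\fE)$ (this is what H-nefness of $R(\fE_Q)$ gives you via Definition \ref{neftori}), and then the centre term $L_{\chi_0}$ is numerically \emph{flat} because $R(\fE)$ is H-nflat. Both signs $m_1,m_2>0$ are then read off from the behaviour on fibres, and one concludes $L^\ast_\chi$ nef on $\fR_P(\fE)$, hence semistability by Theorem \ref{Miyahigher}. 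Note that this is where the full H-nflatness hypothesis (not merely H-nef) is actually used---something your argument never invokes.

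The induction on semisimple rank is a distraction and does not close the gap. First, you cannot assert that $\fE_Q$ is H-nflat: the definition gives only that $\fE_Q$ is H-nef, and nothing in the recursion forces $R(\fE_Q)$ to be H-nflat. Second, even granting the inductive hypothesis, the descended character $\bar\chi$ (to the extent it exists) is a character of $Q$ itself, not of a proper parabolic of $Q$, so there is no ``relevant scheme of Higgs reductions'' of $\fE_Q$ on which to invoke it; you would simply be asking whether a line bundle associated to $R(\fE_Q)$ via a given character is nef, and that is a direct consequence of the H-nefness of $R(\fE_Q)$---no induction required. The paper's proof is accordingly non-inductive: it uses only $R(\fE_Q)$ H-nef (from $\fE_Q$ H-nef) together with $R(\fE)$ H-nflat.
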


\begin{proof} 
Let   $P\subset G$  be a maximal
parabolic subgroup, and  $\chi$   a nontrivial dominant character  of $P$. Let $Q$ be a standard quotient
of $P$, and $\psi\colon P\to Q$ the projection.   Given a character $\chi_Q\colon Q\to\C^\ast$ we may define
a character $\chi'$ of $P$ by letting $\chi'=\chi_Q\circ\psi$.

Since  the  universal quotient $\fE_Q$ is an H-nef  principal Higgs $Q$-bundle, 
the radical bundles $R(\fE_Q)$ are H-nef as well, and we may choose
the  character  $\chi_Q\colon Q\to\C^\ast$ so that the restriction of the dual of the line bundle
$L_Q=E_Q\times_{\chi_Q}\C$ to $\fR_P(\fE)\subset E/P$ is 
nef (cf.~Definition \ref{neftori}: $\chi_Q$ may be taken as the composition of the iso morphism $\lambda$ with the determinant morphism $(\C^\ast)^r$). 
Let $L'$ be the line bundle on $E/P$ associated to $E_P$ 
by the character $\chi'$. One defines a morphism 
\begin{eqnarray*} L' & \to &  L_{Q} \\
(g,z) &\mapsto & ((g,e),z) \end{eqnarray*}
 which turns out to be surjective,
hence it is an isomorphism. Since $\operatorname{Pic}(G/P)\simeq \Z$, we have $m_1 \chi = m_2\,\chi'+\chi_0$,
   for some integers $m_1$, $m_2$ and a character $\chi_0$ of the centre of
$G$. The line bundle $L^\ast_\chi$ is nef when restricted
to the fibres of $E/P\to X$ (which are copies of $G/P$), while
$L_Q^\ast$ is nef after restricting to the intersections
of these fibres with the scheme of Higgs reductions $\fR_P(\fE)$, and
  the restriction of the line bundle associated to $\chi_0$ is numerically flat.
Hence we may assume that $m_1$ and $m_2$ are both positive. Therefore
$L^\ast_{\chi\vert \fR_P(\fE)}$ is nef. 
This by Theorem \ref{Miyahigher} implies the claim.
\end{proof}

\begin{remark} For non-Higgs principal bundles,
one can prove that a principal $G$-bundle $E$ is numerically flat if and only
if it is semistable and  $c_2(\Ad(E))=0$.
\end{remark}

 \begin{prop}\label{Hconv} If a principal Higgs $G$-bundle $\fE$ is semistable, satisfies\break
$c_2(\Ad(E))  =0$, and its radical $R(\fE)$ is H-nflat, then it is H-nflat.
\end{prop}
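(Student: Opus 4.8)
The plan is to verify the two clauses of Definition \ref{P-Hnef} directly, using the recursive structure of that definition together with the results already established, in particular Theorem \ref{AdHnflat} and Theorem \ref{Miyahigher}. The hypothesis gives us three things: $\fE$ is semistable, $c_2(\Ad(E))=0$ in $H^4(X,\R)$, and $R(\fE)$ is H-nflat. By definition, to show $\fE$ is H-nflat we must show $\fE$ is H-nef and $R(\fE)$ is H-nflat; the latter is assumed, so everything reduces to proving $\fE$ is H-nef. For that, clause (i) of Definition \ref{P-Hnef} asks that $R(\fE)$ be H-nef, which is immediate since H-nflat implies H-nef; the real content is clause (ii), that for every maximal parabolic $P\subset G$ and every standard quotient $Q$ of $P$, the universal Higgs quotient $\fE_Q$ is H-nef.

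To handle clause (ii) I would argue by induction on the semisimple rank of $G$, exactly as in the recursive definition itself. The base case $\rk_{ss}(G)=0$ is trivial: then $G=R$ and $\fE=R(\fE)$ is H-nflat by hypothesis, hence H-nef. For the inductive step, fix a maximal parabolic $P$ and a standard quotient $Q$ of $P$. The key observation is that $\fE_Q$ is itself a semistable principal Higgs $Q$-bundle whose adjoint bundle has vanishing second Chern class: semistability of $\fE_Q$ should follow from the semistability of $\fE$ via the fact that reductions compatible with $\phi$ give Higgs reductions and the slope computations of Lemmas \ref{RamaHiggs} and \ref{adjsemi} (essentially $\Ad(\fE_Q)$ is a Higgs subquotient of the restriction of $\Ad(\fE)$ to $\fR_P(\fE)$, and by Theorem \ref{AdHnflat} that restriction is H-nflat, hence so are its Higgs subquotients, hence $\fE_Q$ is semistable by Lemma \ref{adjsemi}); and the Chern class vanishing is inherited because $\Ad(E_Q)$ appears as a subbundle of the pullback of $\Ad(E)$ along a morphism of the form $\fR_P(\fE)\to X$, and $c_2$ of a subbundle of a numerically flat bundle vanishes. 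Then, since $\rk_{ss}(Q)<\rk_{ss}(G)$, and since additionally the radical $R(\fE_Q)$ is H-nflat (its associated line bundles arise as restrictions of line bundles $L_\chi^\ast$ which are nef by Theorem \ref{Miyahigher}, and are also dual-nef by the same argument applied to the numerically flat bundle $\Ad(\fE)_{\vert \fR_P(\fE)}$), the inductive hypothesis applies to $\fE_Q$ and shows it is H-nflat, in particular H-nef. This establishes clause (ii).

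The main obstacle I anticipate is the bookkeeping needed to show that the hypotheses ``semistable and $c_2(\Ad(E))=0$'' are actually inherited by the universal Higgs quotient $\fE_Q$ over the (possibly singular) scheme $\fR_P(\fE)$, since $\fR_P(\fE)$ is not $X$ and one is now working with Higgs bundles on a singular base. Here I would lean on Theorem \ref{AdHnflat}, which tells us $\Ad(\fE)$ is H-nflat, and on the fact (from \cite{BG1}, recalled in the Appendix) that H-nflatness passes to Higgs subquotients and pulls back under arbitrary morphisms; combined with the functoriality of the universal quotient construction (Remark \ref{funct}) and the embedding of $\Ad(E_Q)$ into an associated bundle built from $\Ad(E)$, this should give that $\Ad(\fE_Q)$ is H-nflat, which by Lemma \ref{adjsemi} yields semistability of $\fE_Q$ and by the vanishing of the first and second Chern classes of a numerically flat bundle yields the needed Chern-class condition. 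Once that inheritance is in place, the induction closes mechanically, and the conclusion $\fE$ H-nflat follows by combining clause (i), clause (ii), and the assumed H-nflatness of $R(\fE)$.
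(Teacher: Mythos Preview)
Your overall setup is right: the only thing to check is clause (ii) of Definition \ref{P-Hnef}, and Theorem \ref{Miyahigher} is indeed the engine that makes the relevant line bundles nef. But the induction you propose does not close, and this is a genuine gap rather than bookkeeping.

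The problem is your claim that $R(\fE_Q)$ is H-\emph{nflat}. It is only H-\emph{nef}. Take $G=Gl(2,\C)$, $P$ the Borel, and $Q\simeq\C^\ast$ a standard quotient; then $\fE_Q$ is the universal Higgs quotient line bundle $\cO(1)$ restricted to $\fR_P(\fE)\subset\PP V$. Over any point of $X$ where the Higgs field vanishes the fibre of $\fR_P(\fE)$ is a full $\PP^1$, and $\cO(1)$ restricted to it has degree $+1$, so its dual is not nef. Theorem \ref{Miyahigher} gives you $L_\chi^\ast$ nef for dominant $\chi$, but nothing forces $L_\chi$ itself to be nef, and your appeal to the numerical flatness of $\Ad(\fE)_{\vert\fR_P(\fE)}$ does not produce this: the quotient $\Ad(\fE)/\Ad(\fE_P)$ has nef determinant, but its dual is only a Higgs \emph{sub}bundle of an H-nflat bundle, which tells you nothing. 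Since your inductive hypothesis requires $R(\fE_Q)$ to be H-nflat, the induction cannot be invoked.

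The paper sidesteps this entirely. Rather than carrying the full package ``semistable, $c_2=0$, radical H-nflat'' through the recursion (which, as you note, would also require making sense of semistability and Chern classes over the singular scheme $\fR_P(\fE)$), it unwinds Definition \ref{P-Hnef} via the line-bundle criterion of Proposition \ref{crit}: H-nefness of $\fE$ amounts to nefness of a finite list of line bundles, namely the radicals $R(\fE_Q)$, $R((\fE_Q)_{Q'})$, and so on down the tower. Each of these line bundles is associated to a character which, when pulled back through the chain of projections $P\to Q\supset P'\to Q'\supset\cdots$, is (a rational multiple of) a dominant character of a parabolic of $G$; Theorem \ref{Miyahigher} then gives nefness directly. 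No inductive hypothesis on $\fE_Q$ is needed, and no statement about $\fE_Q$ beyond H-nef is ever claimed.
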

\begin{proof} 
Since by hypothesis $R(\fE)$ is H-nflat, 
we   only need to show that all universal quotient principal Higgs bundles
$\fE_Q$ are H-nef. In particular, in virtue of our recursive definition, we   need to show that all radicals $R(\fE_Q)$ are
H-nef, and that a number of other radicals are H-nef as well (cf.~Proposition \ref{crit}). Let us just check why
the radicals $R(\fE_Q)$ are
H-nef. Now, it turns out that  every character
of $Q$ composed with the projection $\psi_s\colon P\to Q$ is 
a (possibly rational) multiple of a dominant character $\chi$ of $P$. 
Since $\fE$ is semistable, and  $c_2(\Ad(E))  =0$,
by Theorem \ref{Miyahigher}, the line bundle $L_\chi^\ast$ is nef.
This implies the existence of an isomorphism $R(Q)\stackrel{\sim}{\to} (\C^\ast)^r$
such that the vector bundle associated by it to $R(\fE_Q)$ is nef.
This means that
$R(\fE_Q)$ is H-nef.  \end{proof}

\bigskip\section{Numerically flat principal bundles and flat reductions}\label{flatcon}
In \cite{DPS94} numerically flat vector bundles were characterized as vector bundles admitting filtrations
whose quotients are locally free and stable, and admit flat unitary connections. 
In this section we prove a similar result for principal bundles, with a partial generalization to principal Higgs bundles.

We start by reviewing some facts about connections on principal bundles,
covering also the case when a Higgs field is present. 
Let $K$ be a maximal compact subgroup of a
connected reductive complex algebraic group $G$. Note that the Lie algebra
$\g$ of $G$ admits an involution $\iota$, called the \emph{Cartan involution}, whose +1 eigenspace is the Lie algebra $\mathfrak k$ of $K$. If $\fE=(E,\phi)$ is a principal Higgs $G$-bundle,
we may extend $\iota$ to an involution on the sections of the bundle $\Ad(E)\otimes\mathcal A^1$
(where $\mathcal A^1$ is the bundle of complex-valued smooth differential  1-forms) by letting
$$\iota(s\otimes\omega) =- \iota(s)\otimes\bar\omega\,.$$

Given  a reduction $\sigma$ of the structure group of $E$ to $K$, there is a unique connection 
$\nabla_{\sigma}$ on $E$ which is compatible with the complex structure of $E$ and with the reduction \cite{RamaSub}. By analogy with the vector bundle case, we call it the \emph{Chern} connection associated with the reduction $\sigma$. The Higgs field may be used to introduce another connection $$\nabla_{\sigma,\phi} = \nabla_\sigma + \phi + \iota(\phi)$$
which we call the \emph{Hitchin-Simpson connection} of the triple $(\fE,\sigma)=(E,\phi,\sigma)$.

\begin{defin} A principal Higgs $G$-bundle $\fE$ is said to be \emph{Hermitian flat}
if it admits a reduction of its structure group to $K$ such that the corresponding  Hitchin-Simpson connection is flat.
\end{defin}

To state our results we need the notion of \emph{polystable} principal Higgs bundle.
Let us recall that the notion of \emph{slope} of a reduction was introduced
in section \ref{phb}, cf.~Lemma \ref{RamaHiggs}.

\begin{defin} \label{admi} 
A reduction $\sigma$ of the structure group of $G$ of a principal Higgs $G$-bundle
$\fE$ to a parabolic subgroup $P\subset G$  is said to be \emph{admissible} if $\mu_\sigma(\chi)=0$ for
every character of $\chi$ of $P$ which vanishes on the centre of $G$.
\end{defin}
\begin{defin} A principal Higgs $G$-bundle $\fE$ is said to be \emph{polystable}
if there is a parabolic subgroup $P$ of $G$ and a Higgs reduction $\sigma$ of the structure
group of $E$ to a Levi subgroup $L$ of $P$ such that
\begin{enumerate} \item
the reduced principal Higgs $L$-bundle $\fE_\sigma$ is stable;
\item the   principal Higgs $P$-bundle obtained by extending the structure
group of $\fE_\sigma$ to $P$ is an admissible reduction of the structure group of $\fE$ to
$P$ (cf.~Definition \ref{admi}). \end{enumerate}\end{defin}

Also in this case one has a \emph{Hitchin-Kobayashi correspondence} \cite{RamaSub}.
Choose a K\"ahler form $\omega$ on $X$ representing the polarization $H$ we are using. We 
say that a reduction $\sigma$ of the structure group $G$ of a principal Higgs $G$-bundle $\fE$ to a maximal compact  subgroup $K$ is \emph{Hermitian-Yang-Mills}  if there is an element $\tau$ in the centre
$\mathfrak z$ of the Lie algebra $\g$ of $G$ such that
$$ \mathcal K_{\sigma,\phi} = \tau$$
where $ \mathcal K_{\sigma,\phi} $ is the \emph{mean curvature} of the Hitchin-Simpson connection
(computed with the K\"ahler form $\omega$).

\begin{thm} \cite{AnBis} \label{HK} A principal Higgs $G$-bundle $\fE$ is polystable if and only
if it admits an Hermitian-Yang-Mills reduction to a maximal compact subgroup $K\subset G$.
\end{thm}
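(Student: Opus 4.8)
The plan is to derive Theorem \ref{HK} from Simpson's Hitchin--Kobayashi correspondence for Higgs vector bundles \cite{Si1}, reducing in each direction to the case of a \emph{stable} bundle by exploiting the definition of polystability. First I would note that it suffices to treat stable $\fE$: if $\fE$ is polystable it comes equipped with a Higgs reduction $\sigma_0$ to a Levi factor $L$ of a parabolic $P$, with $\fE_{\sigma_0}$ stable and the extension of $\fE_{\sigma_0}$ to $P$ an admissible reduction of $\fE$; a Hermitian--Yang--Mills reduction of $\fE_{\sigma_0}$ to a maximal compact $K_L\subset L$ then extends to one of $\fE$ to a maximal compact $K\supset K_L$ of $G$, admissibility being exactly what is needed to keep the mean curvature of the Hitchin--Simpson connection valued in the centre. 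Conversely, a Hermitian--Yang--Mills reduction of $\fE$ will be shown to produce such a parabolic-plus-Levi structure via the equality discussion below, so the theorem reduces to the equivalence ``$\fE$ stable $\iff$ $\fE$ admits a Hermitian--Yang--Mills reduction''.

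For the direction ``Hermitian--Yang--Mills $\Rightarrow$ semistable'', I would argue by Chern--Weil. Fix a reduction $\sigma$ of $G$ to $K$ with $\mathcal K_{\sigma,\phi}=\tau\in\mathfrak z$, and let $\sigma'$ be any Higgs reduction of $G$ to a parabolic $P$, defined over an open set whose complement has codimension $\ge2$, and $\chi$ a nontrivial dominant character of $P$. The line bundle $L_\chi$ on $E/P$, associated with $E_P$, inherits a Hermitian metric from $\sigma$, and the degree $\deg\sigma'^\ast(L_\chi^\ast)$ is computed as a curvature integral that decomposes into a manifestly non-negative term built from the second fundamental forms of the reduction (non-negative because $\chi$ is a non-negative combination of fundamental weights) and a term proportional to the pairing of $\chi$ with $\mathcal K_{\sigma,\phi}=\tau$. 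Since dominant characters are trivial on $Z(G)$, the latter term vanishes, whence $\deg\sigma'^\ast(L_\chi^\ast)\ge0$ and $\fE$ is semistable; if some such degree is zero, the corresponding second fundamental form vanishes identically, producing a holomorphic and metric reduction to a Levi factor, which upgrades semistability to polystability.

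The content is in the converse, ``stable $\Rightarrow$ Hermitian--Yang--Mills'', which is the main obstacle. Here I would pass to the adjoint Higgs bundle: by Lemma \ref{adjsemi}, $\Ad(\fE)$ is semistable, and stability of $\fE$ forces $\Ad(\fE)$ to be a direct sum of stable Higgs vector bundles, all of slope $0$ since $c_1(\Ad(E))=0$; hence $\Ad(\fE)$ is a polystable Higgs vector bundle and, by \cite{Si1}, admits a Hermitian--Yang--Mills metric for the Hitchin--Simpson connection. The delicate step is to promote this metric to a \emph{reduction of the structure group} of $E$ to $K$ --- i.e.\ to show that the metric can be taken compatible with the bracket and with the Cartan involution $\iota$, so that it is a section of the $G/K$-bundle associated with $E$ rather than merely a metric on an associated vector bundle. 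I would obtain this by a direct variational argument on the space of such reductions: that space is the space of sections of the bundle with fibre the non-positively curved symmetric space $G/K$, along whose geodesics the Donaldson-type functional attached to $\nabla_{\sigma,\phi}$ is convex; stability supplies the coercivity (an Uhlenbeck--Yau and Simpson-type estimate) that yields a smooth minimizer, whose Euler--Lagrange equation is precisely $\mathcal K_{\sigma,\phi}\in\mathfrak z$. The required analytic estimates are those of \cite{Si1} in the Higgs case and of \cite{RamaSub} in the principal bundle case, adapted as in \cite{AnBis}; I would cite them rather than redo them, the genuinely new bookkeeping being the interplay between reductivity of $G$, the Cartan involution, and the heat flow, which is what keeps the evolving metric a bona fide reduction of the $G$-structure.
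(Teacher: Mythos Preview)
The paper does not give its own proof of Theorem \ref{HK}: the theorem is stated with a citation to \cite{AnBis} and no argument follows. There is therefore nothing in the paper to compare your proposal against; the authors are quoting a result from the literature, not reproving it.

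That said, a brief comment on your sketch. Your outline is broadly the standard route taken in \cite{AnBis}: a Chern--Weil computation for the ``easy'' direction, and a Donaldson-type heat-flow/variational argument on sections of the $G/K$-bundle for the ``hard'' direction, with the Uhlenbeck--Yau/Simpson analytic machinery supplying coercivity under stability. One point to tighten: in the stable $\Rightarrow$ HYM direction you first pass to $\Ad(\fE)$, invoke Simpson to get a Hermitian--Yang--Mills metric there, and then separately run a variational argument on the space of reductions to $K$. These two moves are somewhat redundant --- the metric on $\Ad(\fE)$ produced by Simpson need not a priori come from a reduction of $E$ to $K$, and you end up not using it, since your actual argument is the direct minimization over sections of $E(G/K)$. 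It would be cleaner to drop the adjoint detour and go straight to the functional on reductions, which is what \cite{AnBis} does. Also, your claim that ``stability of $\fE$ forces $\Ad(\fE)$ to be a direct sum of stable Higgs vector bundles'' is essentially Proposition \ref{polyrem}, whose proof the paper says is ``implicitly contained in \cite{AnBis}'' --- so invoking it en route to Theorem \ref{HK} risks circularity unless you supply an independent argument.
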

 
This notion of polystability extends the one holding for Higgs vector bundles,
i.e., a Higgs vector bundle is polystable if it is a direct sum of stable Higgs vector bundles having the
same slope. A result similar to Lemma  \ref{adjsemi}(i) may be proved. The proof of this
result is implicitly contained in \cite{AnBis}.

 \begin{prop} 
A principal Higgs bundle is polystable if and only if its adjoint bundle is polystable.
\label{polyrem}
\end{prop}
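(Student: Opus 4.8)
The plan is to prove both implications by combining Theorem \ref{HK} (the Hitchin--Kobayashi correspondence for principal Higgs bundles) with the analogous statement for Higgs vector bundles and with Lemma \ref{adjsemi}. First I would fix a maximal compact subgroup $K\subset G$ and recall that the Lie algebra $\g$ of $G$, equipped with the Killing form and the Cartan involution $\iota$, is a (polarized) object to which the adjoint representation $\Ad\colon G\to\Aut(\g)$ applies; since $\Ad$ kills only the centre $Z(G)$, and a reduction of $E$ to $K$ induces a reduction of $\Ad(E)$ to the image of $K$ (a maximal compact subgroup of $\Ad(G)$), the Hitchin--Simpson connection $\nabla_{\sigma,\phi}$ on $E$ pushes forward to the Hitchin--Simpson connection of the Higgs vector bundle $\Ad(\fE)$, and its mean curvature pushes forward to the mean curvature of $\Ad(\fE)$.

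For the forward implication, suppose $\fE$ is polystable. By Theorem \ref{HK}, $\fE$ carries a Hermitian--Yang--Mills reduction $\sigma$ to $K$, i.e.\ $\mathcal K_{\sigma,\phi}=\tau$ for some $\tau$ in the centre $\mathfrak z$ of $\g$. Pushing forward along $\Ad$, the induced reduction of $\Ad(E)$ to $\Ad(K)$ is Hermitian--Yang--Mills with central curvature $\Ad_\ast(\tau)$; but $\Ad_\ast$ annihilates $\mathfrak z$, so in fact the mean curvature of $\Ad(\fE)$ is zero, which is certainly central in $\End(\Ad(E))$. By the Hitchin--Kobayashi correspondence for Higgs vector bundles (Theorem \ref{HK} applied to $\Aut(\g)$, or the classical result of Simpson), $\Ad(\fE)$ is polystable. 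Conversely, assume $\Ad(\fE)$ is polystable. Then $\fE$ is semistable by Lemma \ref{adjsemi}(i). To upgrade semistability to polystability one uses the structure of semistable-but-not-stable principal Higgs bundles: there is a parabolic $P$, an admissible Higgs reduction $\sigma$ to $P$, and the associated graded with respect to the Levi factor $L$ is a stable principal Higgs $L$-bundle $\fE_\sigma$; polystability of $\fE$ is then equivalent to this admissible reduction being ``split,'' i.e.\ reducible to $L$ itself. One shows that polystability of $\Ad(\fE)$ forces this splitting: the adjoint bundle $\Ad(\fE)$ inherits a filtration from the parabolic reduction whose associated graded is $\Ad(\fE_\sigma)\oplus(\text{nilradical part})$, and since a polystable Higgs bundle is canonically isomorphic to the associated graded of any of its Jordan--H\"older filtrations, the extension splits; translating this splitting back through the Tannakian/frame-bundle dictionary yields a Higgs reduction of $\fE$ to $L$ realizing polystability. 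Alternatively, and more cleanly, one can argue entirely on the analytic side: $\Ad(\fE)$ polystable gives (by Hitchin--Kobayashi for vector bundles) a Hermitian--Yang--Mills metric on $\Ad(\fE)$ with vanishing mean curvature; one then shows this metric is induced by a reduction of $E$ to $K$ — using that $\Ad(G)\hookrightarrow\Aut(\g)$ is an orbit of the adjoint action and that the metric on $\Ad(E)$ is $\Ad(K)$-invariant, so its ``frame'' defines a reduction of $E$ to $K$ — and that the corresponding Hitchin--Simpson connection on $E$ then has central mean curvature (the obstruction lives in $\g/[\g,\g]$, which is detected by the radical, and one checks the radical contribution is automatically HYM because it is abelian). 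Theorem \ref{HK} then gives that $\fE$ is polystable.

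I expect the main obstacle to be the converse direction, specifically the step of descending the HYM metric (equivalently the reduction to $K$) from $\Ad(E)$ back to $E$: one must verify that the metric on $\Ad(E)$ produced by the vector-bundle Hitchin--Kobayashi theorem is compatible with the $\Ad(G)$-structure (not merely the $\Aut(\g)$-structure), so that it corresponds to a genuine reduction of the structure group of $E$, and then that the centre of $G$ does not obstruct the mean-curvature condition. As noted in the excerpt, this is precisely the argument carried out in \cite{AnBis}, and the honest version of the proof would simply invoke it; the quickest path is therefore to reduce, as above, to the Higgs-vector-bundle case via $\Ad$ in both directions, citing Theorem \ref{HK} for the equivalences and Lemma \ref{adjsemi}(i) to pass between semistability of $\fE$ and of $\Ad(\fE)$.
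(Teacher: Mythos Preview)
Your proposal is correct and aligns with the paper's approach: the paper gives no proof at all for this proposition, merely stating that ``the proof of this result is implicitly contained in \cite{AnBis}'' --- i.e., it defers entirely to the Hitchin--Kobayashi correspondence proved there, which is exactly the route you sketch. Your outline of the forward implication (push the HYM reduction through $\Ad$ and note the centre dies) and your identification of the genuine difficulty in the converse (descending the HYM metric on $\Ad(E)$ back to a $K$-reduction of $E$, compatible with the $\Ad(G)$-structure rather than merely the $\Aut(\g)$-structure) are both accurate, and your final remark that the honest proof simply invokes \cite{AnBis} is precisely what the paper does.
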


We state now our second main result in the case of (non-Higgs) principal bundles.
\begin{thm} \label{coolreduction} A principal   $G$-bundle
 $E$ is nflat    if and only if there is   a
parabolic subgroup $P$ of $G$ and a   reduction $\sigma$ of the structure group
of $E$ to $P$ such that the principal   $L(P)$-bundle obtained
by extending the structure group of the reduced   bundle $E_P$ to the Levi factor $L(P)$
is Hermitian flat and polystable.
\end{thm}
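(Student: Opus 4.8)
The plan is to prove the two implications separately, following the template of the Higgs-vector-bundle case from \cite{BG1} (and its non-Higgs ancestor \cite{DPS94}) but using the principal-bundle machinery developed in Sections \ref{Hnflat} and \ref{flatcon}. For the "if" direction I would start from a reduction $\sigma$ to $P$ such that the extension $E_{L(P)}$ to the Levi factor is Hermitian flat and polystable, and deduce that $E$ is nflat. First I would observe that Hermitian flatness of $E_{L(P)}$ makes $\mathrm{Ad}(E_{L(P)})$ a flat (hence numerically flat) vector bundle, so all the characteristic classes vanish; in particular $c_1$ and $c_2$ of the adjoint bundle of the extension, and then of $E$ itself, vanish in real cohomology (the unipotent radical contributes nothing to rational characteristic classes, since $R_u(P)$ is contractible and $E_P \to E_{L(P)}$ is an affine-bundle-like extension). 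Polystability of $E_{L(P)}$ together with Proposition \ref{polyrem} and the admissibility built into the definition gives semistability of $E$. Having $E$ semistable with $c_2(\mathrm{Ad}(E)) = 0$, the non-Higgs analogue of Proposition \ref{Hconv} (or directly Theorem \ref{Miyahigher}(iii)$\Rightarrow$(i) together with Proposition \ref{nflatss}'s converse, cf.\ Remark \ref{remBB}) yields that $E$ is nflat. One has to be slightly careful that the radical $R(E)$ is nflat: but $R(E)$ is a quotient of $E_{L(P)}$'s radical factor, which carries a flat unitary connection, so $R(E)$ is Hermitian flat and in particular nflat by Definition \ref{neftori}.

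For the "only if" direction — which I expect to be the substantive part — I would argue as follows. Assume $E$ is nflat. By Proposition \ref{nflatss} (non-Higgs case, $\phi = 0$) $E$ is semistable, and by the remark after Proposition \ref{nflatss} one has $c_2(\mathrm{Ad}(E)) = 0$ as well. Thus $\mathrm{Ad}(E)$ is a semistable vector bundle with $c_1 = 0$, $c_2 = 0$, hence numerically flat, hence by \cite{DPS94} it admits a filtration by subbundles with stable Hermitian-flat quotients; in particular $\mathrm{Ad}(E)$ is polystable (a numerically flat vector bundle that is semistable with vanishing Chern classes is a successive extension of stable flat bundles of slope zero, and being itself of degree zero and semistable it is actually polystable by the standard Jordan–Hölder/Narasimhan–Seshadri-type argument in the flat case). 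By Proposition \ref{polyrem}, $E$ itself is polystable: there is a parabolic $P$, a Levi factor $L = L(P)$, and an admissible Higgs (here: ordinary) reduction $\sigma$ to $L$ making $E_\sigma$ stable. It remains to upgrade "polystable" on $E_L := E_\sigma$ to "Hermitian flat". For this I would invoke the Hitchin–Kobayashi correspondence (Theorem \ref{HK}, non-Higgs version, i.e.\ \cite{RamaSub}): $E_L$ being polystable, it admits a Hermitian–Yang–Mills reduction to a maximal compact $K_L \subset L$, whose Chern connection has mean curvature central. The point is then to show that this HYM connection is actually \emph{flat}: its curvature $F$ lies in $\mathrm{Ad}(E_L) \otimes \mathcal A^2$, and the induced connection on $\mathrm{Ad}(E_L)$ is Hermitian–Yang–Mills on a bundle with vanishing $c_1$ and $c_2$; by the Bogomolov–Lübke / simple Chern–Weil inequality $\int_X (2 c_2 - c_1^2)\wedge \omega^{n-2} \ge 0$ with equality iff the connection is projectively flat, one gets that $\mathrm{Ad}(E_L)$ carries a projectively flat HYM connection, and since its $c_1 = 0$ it is in fact flat; unwinding, the connection on $E_L$ differs from a flat one by a central term which vanishes because $R(E_L)$ is nflat of degree zero. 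Hence $E_L$ is Hermitian flat and polystable, as required.

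The main obstacle, as I see it, is the last step: extracting \emph{flatness} (not merely Hermitian–Yang–Mills) of the Levi extension. This is where the hypothesis $c_2(\mathrm{Ad}(E)) = 0$ is essential and where one must combine the Bogomolov–Lübke–type vanishing with the fact that the adjoint bundle has vanishing first Chern class, so that "projectively flat + $c_1 = 0$" forces "flat". One subtlety to handle with care is the passage between characteristic classes of $E$, of $E_P$, and of $E_{L(P)}$: since $P = L(P) R_u(P)$ with $R_u(P)$ unipotent, the extension map induces an isomorphism on rational characteristic rings, so $c_2(\mathrm{Ad}(E)) = 0$ is equivalent to $c_2(\mathrm{Ad}(E_{L(P)})) = 0$; I would state this as a small lemma. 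A second point requiring attention is that "polystable" for the principal bundle is defined via a reduction to a Levi subgroup with an admissible extension back to $P$, so the $P$ and $L(P)$ produced by Proposition \ref{polyrem} are exactly the ones appearing in the statement — one should check the compatibility of the two notions of Levi factor, which is immediate from \cite[Sect.~27.5]{Hump}. Everything else is an assembly of results already available in the excerpt.
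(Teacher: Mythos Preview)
Your ``if'' direction is essentially the paper's argument, and is fine. The serious problem is in the ``only if'' direction: the step ``$\Ad(E)$ numerically flat $\Rightarrow$ $\Ad(E)$ polystable $\Rightarrow$ $E$ polystable'' is false. A numerically flat vector bundle is a successive \emph{extension} of stable Hermitian-flat bundles, but such an extension need not split, so there is no ``standard Jordan--H\"older/Narasimhan--Seshadri-type argument'' that yields polystability. Concretely, take $X$ an elliptic curve and $V=F_2$ the Atiyah bundle (the nontrivial extension of $\mathcal O_X$ by $\mathcal O_X$). Then $V$ is semistable of degree $0$, hence numerically flat, so its frame bundle $E$ is nflat in the sense of Definition~\ref{P-Hnef}; but $V$ is indecomposable and not stable, so $E$ is not polystable, and by Proposition~\ref{polyrem} neither is $\Ad(E)\simeq\End(V)$ (indeed $\End(V)\simeq F_3\oplus\mathcal O_X$, and $F_3$ is indecomposable, not stable). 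Thus your mechanism for producing the parabolic $P$ and the Levi reduction collapses at the very first step. Note that in this example the theorem is nevertheless true: the Borel reduction given by $\mathcal O_X\hookrightarrow V$ has Levi extension $\mathcal O_X\oplus\mathcal O_X$, which is Hermitian flat and polystable --- but this reduction does \emph{not} come from a reduction of $E$ to the Levi torus.

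The paper avoids this trap by not passing through polystability of $E$ at all. Instead it takes the DPS filtration $0\subset S_0\subset\cdots\subset S_m=\Ad(E)$ and invokes a structural result (from \cite{AnBis}, see also \cite{BB2}) to the effect that this filtration is symmetric of odd length and that its middle term $S_\ell$ is isomorphic to $\Ad(F)$ for a reduction $F$ of $E$ to a parabolic $P$. The Levi extension $E_L$ then has $\Ad(E_L)\simeq S_\ell/S_{\ell-1}$, which is one of the stable flat quotients provided by DPS; this is what gives polystability of $E_L$ (via Proposition~\ref{polyrem}) and vanishing Chern classes. Only then does one run the Hitchin--Kobayashi correspondence and a Bogomolov--L\"ubke/Kobayashi-type argument (the paper uses the faithful embedding $\mathfrak l\hookrightarrow\End(\mathfrak l)\oplus\mathfrak r(L)$ together with \cite[Lemma~IV.4.12]{koba}) to upgrade the HYM connection on $E_L$ to a flat one. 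Your endgame --- deducing flatness from HYM plus $c_1=c_2=0$ --- is morally the same as the paper's, but you cannot reach that endgame by the route you propose; the missing idea is precisely the identification of the DPS filtration of $\Ad(E)$ with the filtration induced by a parabolic reduction of $E$ itself.
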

\begin{proof}
The ``if" part   is quite easily proved. Since $E$ admits
a flat connection, we have $c_2(\Ad(E))=0$. Moreover,
$E$ is polystable, hence semistable. The radical $R(E)$ carries an induced flat
connection. Hence Proposition \ref{Hconv} implies that
$E$ is nflat.

Let us now prove the ``only if" part.
In view of Remark \ref{remBB}, we know that $\Ad(E)$ is nflat.
As   showed in \cite{DPS94}, this implies that it has a filtration  
\begin{equation}\label{cleverfiltr}  0 \subset {S}_0 \subset \dots \subset {S}_m= \Ad(E) \end{equation}
such that every quotient ${S}_{i+1}/{S}_{i}$ is locally free, flat and stable. 
The analysis made in \cite{AnBis} (see also \cite{BB2}) may be carried over to the present situation:
one shows that the filtration \eqref{cleverfiltr} has an odd number of terms,
and the middle term (say, ${S}_\ell$) is isomorphic to the adjont bundle $\Ad(F)$
of a reduction $F$ of $E$ whose structure group is a parabolic subgroup
$P$ of $G$. 

Let $E_L$ be the principal  
$L(P)$-bundle obtained by extending the structure group of $F$ to $L(P)$. It turns out
that $\Ad(E_L)$ is isomorphic to the quotient ${S}_\ell/{S}_{\ell -1}$.
Since the successive quotients of the filtration \eqref{cleverfiltr} are stable and flat, the bundle  $\Ad(E_L)$ is stable, and moreover,
all its Chern classes vanish \cite{DPS94}. The polystability of $\Ad(E_L)$ implies the polystability
of $E_L$  (see Proposition \ref{polyrem}). By Theorem \ref{HK}, $E_L$ admits a reduction
to the maximal compact subgroup of $L(P)$ such that the corresponding Chern connection satisfies the Hermitian-Yang-Mills condition. 

Now, the homomorphism
$$L \to \Aut(\mathfrak l) \times R(L)$$
given by the adjoint representation of $L=L(P)$, and the projection onto the radical $R(L)$,
gives a injective Lie algebra homomophism
\begin{equation}\label{inj} \mathfrak l \to \End(\mathfrak l) \oplus \mathfrak r(L).\end{equation}
Here $\mathfrak l$ and $\mathfrak r(L)$ are the Lie algebras of $L$ and $R(L)$, respectively.
Thus we have a   vector bundle $V= \Ad(E_L)\oplus W$ which
is associated to $E_L$, and by Lemma \ref{adjsemi} is semistable. Then $\deg(W)=\deg( \Ad(E_L))=0$. Moreover,  
$V$ satisfies $\Delta(V)=0$ because $\Delta(V)$ is a multiple of $c_2(\Ad(E_L))$.
On the other hand, by the same reason we have $\Delta(W)=0$. This implies $c_1(W)^2=0$.

The Hermitian-Yang-Mills connection on $E_L$ induces Hermitian-Yang-Mills connections
on $\Ad(E_L)$ and $W$. Lemma IV.4.12 of \cite{koba}  (with the conditions
$\deg(W)=0$, $c_1(W)^2=0$) implies that the connection on
$\mathfrak W$ is flat, and the same is true for $\Ad(E_L)$. Since the morphism
\eqref{inj} is injective, the Hermitian-Yang-Mills connection on $E_L$  is flat as well.
\end{proof}
 
\begin{corol} If  $E$ is nflat, the cohomology ring of $E$ with coefficients in $\R$ is trivial. \label{weil}
\end{corol}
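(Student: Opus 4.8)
The plan is to read ``the cohomology ring of $E$ with coefficients in $\R$'' as the characteristic (Chern--Weil) ring of $E$, i.e.\ the image of the Weil homomorphism $w_E\colon\Sym(\g^\vee)^G\to H^\bullet(X,\R)$, and to prove that this image is concentrated in degree zero. The whole point is to transport the computation to the flat Levi bundle furnished by Theorem \ref{coolreduction}: once we are on a bundle carrying a flat connection, every positive-degree Chern--Weil class vanishes because the curvature is identically zero.

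First I would invoke Theorem \ref{coolreduction}: since $E$ is nflat there is a parabolic subgroup $P\subset G$, a reduction $E_P$ of $E$ to $P$, and the extension $E_L$ of $E_P$ to the Levi factor $L=L(P)$ which is Hermitian flat, hence (the Higgs field being absent) carries a flat Chern connection. Next I would observe that a $P$-connection on $E_P$ induces a $G$-connection on $E=E_P\times_PG$ whose curvature is $\Ad(E_P)$-valued, so that for every $Q\in\Sym^k(\g^\vee)^G$ the Chern--Weil form of $Q$ only sees the $\mathfrak p$-part; hence the image of $w_E$ is contained in the image of the Weil homomorphism $w_{E_P}\colon\Sym(\mathfrak p^\vee)^P\to H^\bullet(X,\R)$ of the reduced bundle. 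The decisive input is then that the unipotent radical $R_u(P)$ is contractible (being isomorphic to an affine space), so the fibration $BR_u(P)\to BP\to BL$ shows that the projection $P\to L$ induces an isomorphism $H^\bullet(BL,\R)\xrightarrow{\sim} H^\bullet(BP,\R)$. Since $E_L=E_P\times_PL$ is the push-forward of $E_P$ along this projection, the characteristic ring of $E_P$ coincides with that of its Levi extension $E_L$.

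It then remains to show that the characteristic ring of $E_L$ is trivial, and this is exactly where flatness enters: choosing the flat connection on $E_L$, its curvature $\Omega$ vanishes, so for every $Q\in\Sym^k(\mathfrak l^\vee)^L$ with $k\ge 1$ the Chern--Weil form $Q(\Omega,\dots,\Omega)$ is identically zero, whence every positive-degree characteristic class of $E_L$ is zero in $H^\bullet(X,\R)$. Stringing the three steps together, $\operatorname{im}w_E$ sits inside the characteristic ring of $E_P$, which equals the (trivial) characteristic ring of $E_L$; so $w_E$ is trivial in positive degrees, which is the assertion.

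The main obstacle I anticipate is the middle step, the identification of the characteristic ring of $E$ with that of $E_L$. One must take care that reducing to $P$ and then extending to $L$ genuinely preserves the real characteristic classes: this rests entirely on the contractibility of $R_u(P)$ and the resulting real-cohomology isomorphism $H^\bullet(BL,\R)\cong H^\bullet(BP,\R)$, and \emph{not} on any bundle-level isomorphism, since the $P$-bundle one would recover from $E_L$ differs from $E_P$ by the unipotent part and is in general not isomorphic to it. Once this homotopy-theoretic point is secured, the flatness of $E_L$ closes the argument immediately.
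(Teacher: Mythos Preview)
Your proof is correct. Both your argument and the paper's rest on Theorem \ref{coolreduction} to produce the flat Levi bundle $E_L$, but you handle the passage from the characteristic classes of $E$ to those of $E_L$ differently. You work at the level of classifying spaces and Chern--Weil theory: the contractibility of $R_u(P)$ gives $H^\bullet(BP,\R)\cong H^\bullet(BL,\R)$, so the characteristic ring of the reduction $E_P$ coincides with that of $E_L$, and the factorization $X\to BP\to BG$ shows the characteristic ring of $E$ sits inside that of $E_P$. The paper instead proves a stronger topological fact: the $G$-extension $E'=E_L\times_L G$ is topologically isomorphic to $E$. It does this by choosing a faithful representation $\rho\colon G\to\Aut(W)$ and observing that $\rho(P)$ preserves a flag in $W$ on whose graded pieces $R_u(P)$ acts trivially; hence the associated vector bundle $V=E\times_\rho W$ is filtered with graded object $E'\times_\rho W$, and since a filtered bundle is topologically isomorphic to its associated graded, $E$ and $E'$ are topologically the same. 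Your route is cleaner for the stated conclusion and makes the role of the unipotent radical transparent; the paper's route yields the topological isomorphism $E\cong_{\mathrm{top}}E'$ as a byproduct, at the cost of the auxiliary faithful-representation argument.
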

\begin{proof}
This follows from the fact that the principal Higgs $G$-bundle obtained by extending
the structure group of $E_L$ to $G$ is isomorphic to $E$ as a topological bundle.
Keeping up with the notation of Theorem \ref{coolreduction}, let $E$ be
a principal $G$-bundle, $F$ a reduced bundle with structure group a parabolic
subgroup $P\subset G$, and $E_L$ the principal $L$-bundle obtained
by extending the structure group of $F$ to $L$ (here $L$ is the Levi group
corresponding to $P$). Moreover, let $E'$ be the $G$-bundle obtained
by extending the structure group of $E_L$ to $G$; so, $E'$ is the ``graded object"
corresponding to the reduction of $G$ to $P$. Let $\rho\colon G\to \operatorname{Aut}(W)$ be a faithful representation of $G$, and let $V=E\times_\rho W$ be the associated vector bundle. There exists a flag  $0=W_0\subset \dots\subset W_\ell=W$ which is preserved by  $\rho(P)$, such that the unipotent radical of $P$ acts trivially
on the quotients $W_i/W_{i-1}$. Thus $\rho(P)$
 is contained in a parabolic subgroup $P'$ of $\operatorname{Aut}(W)$, and $\rho(L)$ is cointained in a Levi subgroup $L'$ of $P'$. The graded module $V'$
of the filtration of $V$ corresponding to $P'$ is isomorphic to the associated bundle
$E'\times_\rho W$, and on the other hand it is topologically isomorphic to $V$. This implies that $E$ and $E'$ are topologically isomorphic.
\end{proof}

\begin{remark}
By Remarks \ref{cosiva} and \ref{remBB}, the "if" part of Theorem 
\ref{coolreduction} holds true also for principal Higgs bundles.
\end{remark}

\bigskip\section{Some Tannakian considerations}\label{tannaka}
In this section we place  Theorem \ref{Miyahigher}  into the framework
of Tannakian categories. We recall (see e.g.~\cite{DM})  that a neutral Tannakian category $\cat T$ over
a field $\Bbbk$
is a rigid abelian (associative and commutative) $\Bbbk$-linear tensor category such  that
\begin{enumerate}
\item for every
unit object 1 in $\cat T$, the endomorphism space $\operatorname{End}(1)$
is isomorphic to $\Bbbk$;
\item there is an exact faithful functor $\omega\colon \cat T\to \cat{Vect}_{\Bbbk}$,
called a {\em fibre functor.}
\end{enumerate}
Here $\operatorname{Vect}_{\Bbbk}$ is the category of vector spaces over $\Bbbk$. The standard
example of a neutral Tannakian category is the category $\cat{Rep}(G)_{\Bbbk}$ of  $\Bbbk$-linear representations of
an affine group scheme $G$. Indeed, any neutral Tannakian category can be represented
as $\cat{Rep}(G)_{\Bbbk}$ where $G$ is the automorphism group of the fibre functor $\omega$.
Let $\fE$ be a principal Higgs $G$-bundle
on a (say) complex projective manifold $X$.
For any finite-dimensional linear representation $\rho\colon G\to\operatorname{Aut}(W)$ let
$\mathfrak W = \fE\times_\rho W$ be the associated Higgs vector bundle. This correspondence
defines a \emph{$G$-torsor} on the category $\cat{Higgs}_X$ of Higgs vector bundles on $X$, i.e.,
a faithful and exact functor $\fE\colon\cat{Rep}(G)_{\Bbbk}\to\cat{Higgs}_X$ \cite{Si1}. In general,
this is not always compatible with semistability, i.e., $\fE(\rho,W)$ is not always semistable
even when $\fE$ is. In order to have that, we need to impose some conditions. For instance,
we may assume  that  every representation $\rho\colon G\to\operatorname{Aut}(W)$ maps the
connected component of the centre of $G$ containing the identity to the centre
of $\operatorname{Aut}(W)$ (this happens, e.g., when $G$ is semisimple). When this is true, we
say that $G$ is \emph{central}. 

Let $\cat{Higgs}_X^\Delta$ be the full subcategory of $\cat{Higgs}_X$ whose
objects $\mathfrak W$ are Higgs vector bundles such that
$f^\ast\mathfrak W$ is semistable for every morphism $f\colon C\to X$, where
$C$ is any smooth projective curve (in particular, such Higgs bundles are
semistable).  Since the tensor product of semistable Higgs bundles is semistable \cite{Si1}, $\cat{Higgs}_X^\Delta$ it is a tensor category.  However, it is not additive but only preadditive (i.e., every homomorphism set $\operatorname{Hom}(\mathfrak V,\mathfrak W)$
is an abelian group, and composition of morphisms is bilinear over the integers).
 Let $\cat{Higgs}_X^{\Delta,+}$ 
be its additive completion (see e.g.~\cite{Freyd}). We may now prove the following characterization. \begin{prop} Assume that $G$ is central. There is one-to-one correspondence between principal Higgs $G$-bundles $\fE$ satisfying condition (i) or (ii) 
of Theorem \ref{Miyahigher} and $G$-torsors on the category $\cat{Higgs}_X$ taking values in   $\cat{Higgs}_X^{\Delta,+}$. \end{prop}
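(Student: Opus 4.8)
The plan is to reduce everything to Simpson's Tannakian dictionary \cite{Si1}, by which $\fE\mapsto\big(\rho\mapsto\fE\times_\rho W\big)$ is already a bijection between principal Higgs $G$-bundles on $X$ and $G$-torsors on $\cat{Higgs}_X$ (faithful exact tensor functors $\cat{Rep}(G)\to\cat{Higgs}_X$). So the only thing to prove is that, under this bijection, $\fE$ satisfies condition (ii) of Theorem \ref{Miyahigher} \emph{iff} the functor $\rho\mapsto\fE\times_\rho W$ factors through $\cat{Higgs}_X^{\Delta,+}$. First I would record that $\cat{Higgs}_X^{\Delta}$ is a full preadditive tensor subcategory of $\cat{Higgs}_X$ closed under $\otimes$ and duals (tensor products of semistable Higgs bundles are semistable \cite{Si1}), so that the inclusion $\cat{Higgs}_X^\Delta\hookrightarrow\cat{Higgs}_X$ is fully faithful on the level of $\mathrm{Hom}$-groups and hence its additive completion $\cat{Higgs}_X^{\Delta,+}$ identifies with the full subcategory of $\cat{Higgs}_X$ whose objects are finite direct sums of objects of $\cat{Higgs}_X^\Delta$; "taking values in $\cat{Higgs}_X^{\Delta,+}$" thus means that every $\fE\times_\rho W$ is such a direct sum.

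\textbf{The direction (ii) $\Rightarrow$ ``takes values in $\cat{Higgs}_X^{\Delta,+}$''.} This is the clean half. Assume $\fE$ satisfies (ii). Fix a representation $\rho\colon G\to\Aut(W)$ and an arbitrary morphism $f\colon C\to X$ from a smooth projective curve. By (ii) the principal Higgs $G$-bundle $f^\ast\fE$ is semistable; since $G$ is central, $\rho(Z(G)_0)$ lies in the centre of $\Aut(W)$, so Lemma \ref{adjsemi}(ii) applied to $f^\ast\fE$ gives that $(f^\ast\fE)\times_\rho W\cong f^\ast(\fE\times_\rho W)$ is semistable. As $f$ was arbitrary, $\fE\times_\rho W\in\cat{Higgs}_X^\Delta$, so in fact the torsor already takes values in $\cat{Higgs}_X^\Delta\subseteq\cat{Higgs}_X^{\Delta,+}$. (The additive completion is needed only so that the codomain of a torsor be additive.)

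\textbf{The converse, and the main obstacle.} Conversely, suppose the torsor takes values in $\cat{Higgs}_X^{\Delta,+}$. By Lemma \ref{adjsemi}(i), applied after base change by an arbitrary $f\colon C\to X$ and using $\Ad(f^\ast\fE)\cong f^\ast\Ad(\fE)$, it suffices to show $\Ad(\fE)\in\cat{Higgs}_X^\Delta$. Now $\Ad(\fE)=\fE\times_{\mathrm{ad}}\g$ lies in $\cat{Higgs}_X^{\Delta,+}$, hence is a direct sum $\bigoplus_i A_i$ of objects $A_i$ of $\cat{Higgs}_X^\Delta$; moreover $c_1(\Ad(E))=0$, and $\Ad(\fE)$ carries the Lie bracket $[\,,\,]$ (a morphism of Higgs bundles) together with the invariant nondegenerate symmetric form, making it self-dual of degree zero. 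I would then argue that these data force $\Ad(\fE)$ itself to lie in $\cat{Higgs}_X^\Delta$: restricting to a curve $C$, write $\g:=\Ad(f^\ast\fE)=\bigoplus f^\ast A_i$, a direct sum of semistable Higgs bundles of total degree $0$, and suppose it is unstable; its maximal Harder--Narasimhan subsheaf $\mathfrak F$, of slope $\mu^+>0$, then satisfies $[\mathfrak F,\mathfrak F]=0$, since $[\,,\,]$ restricts to a map out of the semistable bundle $\mathfrak F\otimes\mathfrak F$ of slope $2\mu^+$ into $\g$, whose maximal slope is $\mu^+<2\mu^+$; more generally $[\g_{\ge\mu},\g_{\ge\nu}]\subseteq\g_{\ge\mu+\nu}$ for the HN sub-sheaves, so that $\g_{\ge 0}$ is a parabolic subalgebra sheaf with nilpotent radical $\g_{>0}$ of strictly positive degree. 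The hard part will be excluding this last configuration: I would do it by noting that each $f^\ast A_i$, being the pullback of an object of $\cat{Higgs}_X^\Delta$, satisfies the vanishing discriminant condition characterising $\cat{Higgs}_X^\Delta$ (Appendix; \cite{BH,BG1,BG2}), while self-duality of $\g$ pairs the slopes of the $A_i$ with their negatives; combining these with the positivity of $\deg\g_{>0}$ yields a contradiction, so all $A_i$ have slope $0$ and $\g$ is semistable. Hence $\Ad(\fE)\in\cat{Higgs}_X^\Delta$, and $\fE$ satisfies (ii), completing the bijection.
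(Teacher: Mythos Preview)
Your forward direction is exactly the paper's argument: for $\fE$ satisfying (ii) and any representation $\rho$, centrality of $G$ lets you invoke Lemma \ref{adjsemi}(ii) after pullback to any curve, so $\fE\times_\rho W\in\cat{Higgs}_X^\Delta$.

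For the converse, the paper's route is much shorter than yours and bypasses the difficulty you flag. Instead of analysing $\Ad(\fE)$ in isolation, the paper picks a \emph{faithful} representation $\rho\colon G\to\Aut(W)$ and uses the inclusion of Higgs bundles $\Ad(\fE)\hookrightarrow\End(\mathfrak W)$ (coming from $\g\hookrightarrow\End(W)$). Since $c_1(\Ad(\fE))=c_1(\End(\mathfrak W))=0$, once $\End(\mathfrak W)$ is semistable (after pullback to a curve), the degree-zero Higgs subbundle $\Ad(\fE)$ is automatically semistable, and Lemma \ref{adjsemi}(i) finishes. No Harder--Narasimhan or Lie-bracket argument is needed.

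Your proposed converse has a genuine gap at precisely the place you identify as ``the hard part.'' First, the assertion that objects of $\cat{Higgs}_X^\Delta$ have vanishing discriminant is not available in the Higgs setting: Theorem \ref{1.3BHR} only gives (iii)$\Rightarrow$(ii), and the paper explicitly notes (final Remark of Section \ref{appendix}) that the equivalence with $\Delta=0$ is established only for ordinary vector bundles. Second, even if one had $\Delta(A_i)=0$ on $X$, this is vacuous after restriction to a curve ($H^4(C)=0$), so it gives no leverage in the place you use it. Third, self-duality of $\g=\bigoplus f^\ast A_i$ only pairs the slope set with its negative; it does not force all slopes to be zero (one could have summands of slopes $\mu$ and $-\mu$). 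Your HN/bracket argument correctly produces a parabolic subalgebra sheaf $\g_{\ge0}$ with nilpotent radical $\g_{>0}$ of positive degree, but you then need an \emph{independent} reason why this cannot happen, and nothing in your toolbox supplies it. The embedding $\Ad(\fE)\subset\End(\mathfrak W)$ is exactly that missing ingredient: it replaces the Lie-theoretic obstruction argument by a one-line slope comparison.
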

\begin{proof} Given a principal Higgs $G$-bundle $\fE$ and a representation  $\rho\colon G\to\operatorname{Aut}(W)$ the associated Higgs vector bundle $\mathfrak W$  is semistable by Lemma \ref{adjsemi}  (since $G$ is central), and this is true after pullback to any curve.

 Conversely, given a $G$-torsor on $\cat{Higgs}_X^{\Delta,+}$, one builds a principal Higgs $G$-bundle $\fE$ as in \cite[Ch.~6]{Si1}. We prove that $\fE$ is
semistable. If $\mathfrak W$
is an associated Higgs vector  bundle  via a faithful representation, $\Ad(\fE)$ is a Higgs subbundle of
$\End(\mathfrak W)$. If $\mathfrak W$ is semistable,
since  $c_1(\Ad(\fE))=c_1(\End(\mathfrak W))=0$ the bundle   $\Ad(\fE)$ is semistable, so that $\fE$ is semistable as well. This is true after pullback to any curve, so that
$\fE$ is semistable  after pullback to any curve.
\end{proof}

\begin{remark}
In the case of principal (non-Higgs) $G$-bundles, let us denote
by $\mathbf{Vect}^{\Delta,+}_X$ the additive completion of the category of vector bundles that are semistable after pullback to any curve. Since in this case all three conditions in Theorem \ref{Miyahigher} are equivalent, $\mathbf{Vect}^{\Delta,+}_X$ is equivalent to the additive completion of the category of semistable vector bundles with vanishing discriminant.
\end{remark}

\bigskip\section{Appendix: Semistable and numerically\\ effective Higgs vector bundles}
\label{appendix}

Since our treatment of principal Higgs bundles relies quite heavily on previous work
on Higgs vector bundles, we provide here a short resume of the main results
in that theory. The main references are \cite{BG1,BG2,BH}, even though
the treatment we give here includes some modifications. We shall give here only a sketch of the main proofs, referring to \cite{BG1,BG2,BH} for a more detailed
and complete treatment.

\subsection{Ample and numerically effective Higgs bundles}

All varieties are projective varieties
over the complex field.
Let $V$ be a vector bundle of rank $r$ on $X$, and let $s$ be a positive
integer less than $r$. We shall denote by $\grass_s(V)$  the Grassmann bundle of $s$-planes in
$V$, with projection $p_s : \grass_s(V) \to X$.    There
is a universal exact sequence
\begin{equation}\label{univ}
0 \to S_{r-s,V} \xrightarrow{\psi} p_s^*(V) \xrightarrow{\eta} Q_{s,V}
\to 0
\end{equation}
of vector bundles on $\grass_s(V)$, with $S_{r-s,V}$ the universal rank $r-s$ subbundle and $Q_{s,V}$ the universal rank $s$ quotient bundle.

\begin{defin} A Higgs sheaf $\fV$ on $X$ is a coherent sheaf $V$ on $X$ endowed with
a morphism $\phi \colon V \to V \otimes \Omega_X$ of $\Oc_X$-modules such that
$\phi\wedge\phi=0$, where $\Omega_X$ is the cotangent sheaf to $X$. A Higgs subsheaf
$W$ of a Higgs sheaf $\fV=(V,\phi)$ is a subsheaf of $V$ such that $\phi(W)\subset
W\otimes\Omega_X$. A  Higgs bundle is a Higgs sheaf $\fV $ such that $V$ is a
locally-free $\Oc_X$-module. A Higgs sheaf $\fV=(V,\phi) $ is semistable (resp.~stable) if $V$ is
torsion-free, and $\mu(W)\le \mu(V)$ (resp. $\mu(W)< \mu(V)$) for every proper
nontrivial Higgs subsheaf $W$ of $\fV$.\end{defin}

Given a Higgs bundle $\fV $, we may construct closed subschemes
$\hgrass_s(\fV)\subset \grass_s(V)$ pa\-ram\-e\-ter\-iz\-ing rank
$s$ locally-free Higgs quotients, i.e., locally-free quotients of
$V$ whose corresponding kernels are $\phi$-invariant. We define
$\hgrass_s(\fV)$ \emph{(the Grassmannian of locally free rank $s$ Higgs quotients of   $\fV$)} as the closed subscheme of $\grass_s(V)$ where
the composed morphism
\begin{equation}\label{lambda}
(\eta\otimes1)\circ p_s^\ast(\phi) \circ \psi\colon S_{r-s,V}\to Q_{s,V}\otimes
 p_s^\ast\Omega_X
\end{equation}
vanishes. We denote by $\rho_s$ the projections $\hgrass_s(\fV)\to X$. The
restriction of \eqref{univ} to the scheme $\hgrass_s(\fV)$ provides the universal
exact sequence
$
0\to S_{r-s,\fV}\xrightarrow{\psi} \rho_s^\ast (V)\xrightarrow{\eta}
 Q_{s,\fV}\to 0,
$
and $Q_{s,\fV}$ is a rank $s$ universal Higgs  quotient vector bundle, i.e., for every
morphism $f\colon Y\to X$ and every  rank $s$ Higgs quotient $W$ of $f^\ast V$ there
is a morphism $\psi_W\colon Y\to \hgrass_s(\fV)$ such that $f=\rho_s\circ \psi_W$
and $W\simeq\psi_W^\ast (Q_{s,\fV})$.  

\begin{defin}  \label{moddef}  
A Higgs bundle $\fV$ of rank one is said to be Higgs-numerically
effective  (H-nef) if it is numerically effective in the usual sense. If
$\rk \fV \geq 2$ we require that:
\begin{enumerate} \item all bundles $Q_{s,\fV}$ are Higgs-nef; \item the line bundle $\det(V)$ is nef.
\end{enumerate}
If both $\fV$ and $\fV^\ast$ are Higgs-numerically effective, $\fV$ is said to be Higgs-numerically flat (H-nflat).
\end{defin}

Note that if
$\fV=(V,\phi)$, with $V$ nef  in the usual sense, than $\fV$ is H-nef. Moreover, if
$\phi=0$, the Higgs bundle $\fV=(V,0)$ is  H-nef if
and only if $V$ is nef in the usual sense.
 
\begin{prop} \label{properties}
Let $X$ be a smooth projective variety.
\begin{enumerate}
       \item If $f\colon Y \to X$ is a finite surjective morphism of smooth projective
    varieties, and $\fV$ is a Higgs bundle on $X$, then $\fV$ is H-ample (resp.~H-nef) if and only if $f^\ast\fV$ is H-ample (resp.~H-nef).
     \item Every quotient Higgs bundle of a H-nef Higgs bundle
    $\fV$ on $X$ is H-nef.
\end{enumerate}
\end{prop}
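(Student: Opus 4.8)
The two statements follow from the corresponding facts for ordinary nef vector bundles together with the functoriality of the Grassmannian-of-Higgs-quotients construction, so the plan is to set up an induction on $\rk\fV$ in each case and reduce to the rank-one assertion, which is classical (pullback and quotients preserve nefness of line bundles). First I would record the base case: if $\rk\fV=1$, then H-nef means nef in the usual sense, so (i) is the standard fact that $f^\ast L$ is nef iff $L$ is nef for a finite surjective $f$ between smooth projective varieties, and (ii) is the standard fact that a quotient line bundle of a nef line bundle is nef; in fact a rank-one Higgs bundle has no proper quotients other than itself, so (ii) is vacuous here.

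For the inductive step of (i), suppose $\rk\fV=r\ge 2$ and the claim holds for all Higgs bundles of smaller rank. The key point is the compatibility of $\hgrass_s(\fV)$ with base change: for $f\colon Y\to X$ finite surjective one has a natural identification $\hgrass_s(f^\ast\fV)\simeq Y\times_X\hgrass_s(\fV)$, under which the universal Higgs quotient $Q_{s,f^\ast\fV}$ is the pullback of $Q_{s,\fV}$ along the induced finite surjective morphism $\tilde f_s\colon\hgrass_s(f^\ast\fV)\to\hgrass_s(\fV)$ (note $\tilde f_s$ is again a morphism of smooth projective varieties only after passing to a resolution, but one may argue on an open dense subset of codimension $\geq 2$ complement, or invoke the version of the nefness criterion valid on the relevant curves as in Remark \ref{tworem}(ii)). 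Each $Q_{s,\fV}$ has rank $s<r$, so by the inductive hypothesis $Q_{s,\fV}$ is H-nef iff $\tilde f_s^\ast Q_{s,\fV}\simeq Q_{s,f^\ast\fV}$ is H-nef. Likewise $\det(V)$ is nef iff $f^\ast\det(V)=\det(f^\ast V)$ is nef, by the rank-one case. Combining, $\fV$ satisfies the two conditions of Definition \ref{moddef} iff $f^\ast\fV$ does, which is exactly the assertion.

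For (ii), let $\fW$ be a Higgs quotient of the H-nef bundle $\fV$, say of rank $t$; I would again induct on $\rk\fV$. The surjection $\fV\to\fW$ determines a section $Y=X$ of $\hgrass_t(\fV)$, equivalently a morphism (the identity composed with the classifying map) realizing $\fW$ as the pullback of the universal quotient $Q_{t,\fV}$; since $Q_{t,\fV}$ is H-nef by hypothesis and H-nefness is preserved by pullback (point (i), or directly Proposition \ref{prop}(i) in the principal case), $\fW$ is H-nef provided $t<r$, and if $t=r$ then $\fW=\fV$. The main obstacle in both parts is purely bookkeeping about the smoothness/finiteness hypotheses needed to apply the ordinary nefness statements to the induced maps between the Grassmannian schemes, which are in general singular; this is handled exactly as in \cite{BG1,BH}, either by restricting to curves (Remark \ref{tworem}(ii)) or by working over the locus where everything is a bundle and invoking the codimension-two extension principle discussed after Definition \ref{Higgsstabhigher}.
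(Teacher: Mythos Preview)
The paper does not actually supply a proof of this proposition: it appears in the appendix as part of a r\'esum\'e of earlier work, is stated without argument, and the reader is referred to \cite{BG1,BG2,BH} for details. Your inductive strategy --- reducing to rank one via the base-change compatibility $\hgrass_s(f^\ast\fV)\simeq Y\times_X\hgrass_s(\fV)$ and the universal property of the Higgs Grassmannian --- is the natural one and is essentially correct; it is presumably what one finds in those references.

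One small point worth tightening in part (ii): you justify that $\fW\simeq\sigma^\ast Q_{t,\fV}$ is H-nef by appealing to ``point (i)'', but point (i) as stated concerns finite surjective morphisms, whereas the classifying section $\sigma\colon X\to\hgrass_t(\fV)$ is neither. What you actually need is the easier one-sided statement that pullback by an \emph{arbitrary} morphism preserves H-nefness, which follows immediately from the recursive definition (or from the curve criterion you cite). This is not a gap in substance, only in the bookkeeping of which statement you are invoking.
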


 The recursive condition in the definition of H-nefness may be actually expressed in terms of a simpler set of nefness conditions. Let us denote by $Q(s_1,\dots,s_k)_\fV$ the universal Higgs bundle obtained by
 taking the successive universal Higgs quotients of $\fV$, first of rank $s_k$, then
 $s_{k-1}$, all the way to rank $s_1$. The indexes $s_1,\dots,s_k$ satisfy
\begin{equation}\label{s} 1 \le s_1 < \dots < s_k < r.\end{equation}
 So  for instance, $Q_{1,Q_s,\fV}=Q(1,s)_\fV$.
 Moreover $Q(s_1,...,s_k)_\fV$ is a rank $s_1$ Higgs  bundle on $\hgrass_{s_1}(Q(s_2,\dots,s_k)_\fV)$.
The H-nefness condition for $\fV$ amounts to saying that the determinant bundles
$\det(Q(s_1,...,s_k)_\fV)$ are nef for all strings $s_1,\dots,s_k$ satisfying \eqref{s},
and that the line bundles  $Q(1,s_2\dots,s_k)_\fV$ are nef,  for all strings of the type
$(1,s_2,\dots,s_k)$.
 
 \begin{prop} Let $\fV$ be a  Higgs bundle  on $X$. The following conditions are equivalent.
 \begin{enumerate} \item $\fV$  is H-nef
 \item  for every $s$ satisfying $0<s<r=\rk(\fV)$
  the line bundle $Q_{1,Q_s,\fV}$ on $\hgrass_1(Q_{s,\fV})$ is nef,
  and for every string  of  integers $s_1,\dots,s_k$ such that $1 \le s_1 < \dots < s_k < r$,
  the line bundles $\det(Q(s_1\dots,s_k)_\fV)$ are nef.
  \end{enumerate}
\label{crit}\end{prop}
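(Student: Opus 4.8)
The plan is to prove the equivalence by \emph{unrolling} the recursive Definition~\ref{moddef}, via an induction on the rank $r=\rk(\fV)$; no input beyond the definition itself and some bookkeeping with universal Higgs quotients is needed. The base case $r=1$ is immediate, since then both (i) and (ii) reduce to the single assertion that $\det V=V$ is nef (in (ii) every clause indexed by a nonempty string is vacuous, leaving only the empty-string clause ``$\det(\fV)$ nef'').

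For the inductive step, fix $r\ge2$ and assume the proposition for all Higgs bundles of rank $<r$ over an arbitrary base. By Definition~\ref{moddef}, $\fV$ is H-nef if and only if $\det V$ is nef and, for each $s$ with $0<s<r$, the rank-$s$ Higgs bundle $Q_{s,\fV}$ on $\hgrass_s(\fV)$ is H-nef. I would then apply the inductive hypothesis to each $Q_{s,\fV}$. The one point to check is the ``associativity'' of the construction: for a string $1\le t_1<\dots<t_l<s$ there is a canonical identification $Q(t_1,\dots,t_l)_{Q_{s,\fV}}\simeq Q(t_1,\dots,t_l,s)_\fV$, with the base schemes nesting as $\hgrass_{t_1}\!\bigl(\cdots\bigr)\to\dots\to\hgrass_s(\fV)\to X$; this follows from the universal property of the Higgs Grassmannian recalled after \eqref{lambda}, because a rank-$t_1$ Higgs quotient of a $\cdots$ of a rank-$s$ Higgs quotient of $f^\ast V$ is the same datum however it is assembled. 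Granting this, the inductive hypothesis applied to $Q_{s,\fV}$ (and re-expressed in terms of $\fV$) says: $Q_{s,\fV}$ is H-nef iff $\det\bigl(Q(s)_\fV\bigr)$ is nef, $Q(1,s',s)_\fV$ is nef for every $0<s'<s$, and $\det\bigl(Q(t_1,\dots,t_l,s)_\fV\bigr)$ is nef for every string $1\le t_1<\dots<t_l<s$.

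Assembling: taking the conjunction of these conditions over all $s$ with $0<s<r$ and adjoining ``$\det V$ nef'' coming from the top level of Definition~\ref{moddef}, one obtains precisely condition (ii). Indeed every string $1\le s_1<\dots<s_k<r$ is produced exactly once (append $s_k$ to the $Q_{s_k,\fV}$-string $(s_1,\dots,s_{k-1})$, which is the empty $Q_{s_k,\fV}$-string when $k=1$), and since a rank-one string bundle $Q(1,s_2,\dots,s_k)_\fV$ equals its own determinant, the ``line bundle nef'' clauses and the ``determinant nef'' clauses of (ii) merge into the single requirement that $\det\bigl(Q(s_1,\dots,s_k)_\fV\bigr)$ be nef for every string, including the empty one. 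This gives (i)$\Leftrightarrow$(ii). I do not anticipate a real obstacle: the content is just that the recursive definition coincides with its own unrolling, and the only things to get right are the associativity identification above and the indexing of strings together with the strict inequalities --- the latter reflecting that each step takes a \emph{proper} quotient, and the bound $s_k<r$ reflecting that $Q_{r,\fV}=\fV$ carries no new data. One small caveat worth noting is that the $\hgrass_s(\fV)$ may be singular, so Definition~\ref{moddef} and ``nef'' are being read on possibly singular projective schemes; this is harmless, since nefness is tested against curves and the induction proceeds on rank, not on the base.
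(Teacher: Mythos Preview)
Your argument is correct, and in fact slightly more economical than the paper's. The paper first records the full unrolling of Definition~\ref{moddef} (all $\det(Q(s_1,\dots,s_k)_\fV)$ nef together with all rank-one bundles $Q(1,s_2,\dots,s_k)_\fV$ nef), and then, for the direction (ii)$\Rightarrow$(i), invokes the universal property to produce a morphism $f_{s_2,\dots,s_k}\colon \hgrass_1(Q(s_2,\dots,s_k)_\fV)\to \hgrass_1(Q_{s_k,\fV})$ with $Q(1,s_2,\dots,s_k)_\fV\simeq f_{s_2,\dots,s_k}^\ast Q(1,s_k)_\fV$; nefness of the long-string line bundles is then deduced by pullback from the two-step ones in (ii). You bypass this step entirely by observing that a rank-one bundle equals its own determinant, so every clause ``$Q(1,s_2,\dots,s_k)_\fV$ nef'' is already the $s_1=1$ instance of the determinant clause, and likewise the first half of (ii) is redundant; both sides then collapse to the single condition ``$\det(Q(s_1,\dots,s_k)_\fV)$ nef for every string''. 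What you gain is simplicity; what the paper's route gains is the pullback identity itself, which is a genuine structural fact about iterated Higgs Grassmannians even if it is not strictly needed here. Your explicit check of the associativity $Q(t_1,\dots,t_l)_{Q_{s,\fV}}\simeq Q(t_1,\dots,t_l,s)_\fV$ via the universal property is the same ingredient the paper uses, just applied to a more immediate identification. One small point: your reading of (ii) as including the empty string (giving $\det V$ nef) is the correct one and is needed for both arguments; the paper leaves this implicit.
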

 
 \begin{proof} One has a (surjective)
morphism $$\rho_{s_2\dots,s_k}\colon \hgrass_1(Q(s_2\dots,s_k)_\fV) \to \hgrass_{s_k}(\fV),$$
and $Q(1,s_2\dots,s_k)_\fV$ turns out to be a rank one Higgs quotient
of $\rho_{s_2\dots,s_k}^\ast Q_{s_k,\fV}$. By universality, there is morphism
$$f_{s_2\dots,s_k}\colon \hgrass_{1}(Q(s_2,\dots,s_k)) \to \hgrass_1(Q_{s_k,\fV})$$
such that
\begin{equation}\label{pb}  Q(1,...,s_k)_\fV  \simeq f_{s_2\dots,s_k}^\ast Q(1,s_k)_\fV.\end{equation}

If $\fV$ is H-nef, the nefness of the determinant bundles $\det(Q(s_1\dots,s_k)_\fV)$
and of the line bundle $Q(1,s_k)_\fV$
holds by definition.  

Conversely, if the conditions in (ii) hold, the line bundles $Q(1,...,s_k)_\fV$
are nef as a consequence of \eqref{pb}, so that $\fV$ is H-nef.
\end{proof}

\subsection{Generalizing Miyaoka's semistability criterion}
In \cite{Mi} Miyaoka introduced a numerical class $\lambda$ in the projectivization
$\PP V  $ which, when $X$ is a curve, is nef if and only if $V$ is semistable.  
In the case of a  Higgs bundle $\fV$ on a smooth projective variety $X$, we introduce
the following generalizations of the class $\lambda$. These are numerical classes
in the Higgs Grassmannians $\hgrass_s(\fV)$:
$$\theta_{s, \fV}=[c_1(Q_{s,\fV})]-\frac sr\, \rho_s^\ast (c_1(V)) \in
N^1(\hgrass_s(\fV)),$$ where
 $\rho_s \colon \hgrass_s(\fV)\to X$ is  the
natural epimorphism.

Let $\Delta(V)$ be the
characteristic class
$$\Delta(V) =  c_2(V)-\frac{r-1}{2r}c_1(V)^2 = \frac{1}{2r}c_2(V\otimes V^\ast)\, .$$

\begin{thm} \label{1.3BHR}
Let $\fV$ be a Higgs bundle on a smooth projective variety. Consider the following conditions. \begin{enumerate}
    \item All classes $\theta_{s, \fV}$ are nef, for $0<s<r$.
    \item For any smooth projective curve $C$ in $X$, the restriction $\fV_{\vert C}$ is semistable.
      \item $\fV$ is semistable and $\Delta(V)=0$.
\end{enumerate}
Then conditions (i) and (ii) are equivalent, and they are both implied by condition (iii). 

\end{thm}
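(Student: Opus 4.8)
The plan is to reduce to the case of a curve and then bootstrap. \emph{Step 1 (curve case).} First I would show that when $X=C$ is a smooth projective curve, $\fV$ is semistable if and only if each $\theta_{s,\fV}$, $0<s<r$, is nef. One direction is immediate: a rank $s$ locally free Higgs quotient of $\fV$ is the same thing as a section $\sigma\colon C\to\hgrass_s(\fV)$, and $\deg\sigma^\ast\theta_{s,\fV}=\deg\sigma^\ast Q_{s,\fV}-\frac sr\deg V$ is exactly the quantity whose nonnegativity is semistability; by universality of $\hgrass_s(\fV)$ this accounts for all Higgs quotients. For the converse I would test nefness of $\theta_{s,\fV}$ against an irreducible curve $Y\subset\hgrass_s(\fV)$: if $Y$ lies in a fibre of $\rho_s$ then $\theta_{s,\fV}\cdot Y=c_1(Q_{s,\fV})\cdot Y\ge 0$, since $Q_{s,\fV}$ restricted to a fibre is a quotient of a trivial bundle, hence globally generated; if $Y$ dominates $C$, pass to the normalization $\nu\colon\tilde Y\to Y$, note that $g=\rho_s\circ\nu\colon\tilde Y\to C$ is finite so $g^\ast\fV$ is again semistable, and observe that $\nu$ exhibits a rank $s$ locally free Higgs quotient of $g^\ast\fV$, whence $\deg\nu^\ast\theta_{s,\fV}\ge 0$ and thus $\theta_{s,\fV}\cdot Y\ge 0$.

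\emph{Step 2 ((i) $\Leftrightarrow$ (ii)).} This follows from Step 1 plus the compatibility of the Higgs Grassmannians with base change: for $f\colon C\to X$ one has $\hgrass_s(f^\ast\fV)\simeq C\times_X\hgrass_s(\fV)$ and $\theta_{s,f^\ast\fV}=\bar f^\ast\theta_{s,\fV}$, where $\bar f$ is the induced morphism. If (i) holds then $\theta_{s,f^\ast\fV}$ is a pullback of a nef class, hence nef, so $f^\ast\fV$ is semistable by Step 1; this is (ii). Conversely, assuming (ii), the fibre/dominant dichotomy of Step 1 applied to a curve $Y\subset\hgrass_s(\fV)$ with $g=\rho_s\circ\nu\colon\tilde Y\to X$ (a morphism from a smooth projective curve, so $g^\ast\fV$ is semistable by hypothesis) yields $\theta_{s,\fV}\cdot Y\ge 0$; hence (i).

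\emph{Step 3 ((iii) $\Rightarrow$ (ii)).} Assume $\fV$ semistable with $\Delta(V)=0$. I would pass to $\End(\fV)=\fV\otimes\fV^\ast$ with its induced Higgs field: since $\fV^\ast$ is semistable and a tensor product of semistable Higgs bundles is semistable, $\End(\fV)$ is semistable, and $c_1(\End V)=0$, $c_2(\End V)=2r\,\Delta(V)=0$, so all its rational Chern classes vanish. Now invoke Simpson's correspondence: a semistable Higgs bundle on a smooth projective variety with vanishing rational Chern classes is an iterated extension of stable Higgs bundles with vanishing Chern classes, each underlying a local system, and this is compatible with pullback. Thus, for any $f\colon C\to X$, each stable factor of $\End(\fV)$ pulls back to a local system on $C$, i.e.\ to a polystable Higgs bundle of degree zero, so $\End(f^\ast\fV)=f^\ast\End(\fV)$ is an iterated extension of degree-zero semistable Higgs bundles on a curve, hence semistable. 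Since $\End(W)$ is semistable if and only if $W$ is (tensoring a destabilizing quotient with the dual would destabilize the endomorphism bundle), $f^\ast\fV$ is semistable, which is (ii).

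\emph{Main obstacle.} The base-change and Grassmannian bookkeeping of Steps 1--2 is routine; the genuine input is Step 3, which rests on Simpson's nonabelian Hodge theory --- the structure of semistable Higgs bundles with vanishing Chern classes and the compatibility of that structure with arbitrary pullbacks. An attempt to argue $\theta$-nefness directly from a Bogomolov-type inequality for semistable Higgs bundles would still have to control \emph{all} curves in $\hgrass_s(\fV)$, not merely complete intersection ones, and that control seems to require precisely the flatness furnished by Simpson's theorem.
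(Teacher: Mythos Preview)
Your proposal is correct and matches the paper's approach: Steps~1--2 are precisely the content of the paper's Lemma~\ref{curves} (the curve case) together with the base-change compatibility of the Higgs Grassmannians, argued in the same fibre/dominant dichotomy you use. For Step~3 the paper simply defers to \cite{BH}; your route through $\End(\fV)$ --- using $c_1(\End V)=0$, $c_2(\End V)=2r\,\Delta(V)=0$, and Simpson's correspondence to transport semistability along arbitrary pullbacks --- is a correct and standard way to supply that missing argument, and your identification of Simpson's theorem as the essential input is exactly right.
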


\begin{lemma} \label{curves}
A Higgs bundle   $\fV$  on  a smooth projective curve $C$ is semistable  if and only if  all classes $\theta_{s, \fV}$ are nef.
 \end{lemma}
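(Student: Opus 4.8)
The plan is to reduce this statement about a Higgs bundle $\fV = (V,\phi)$ on a curve $C$ to Miyaoka's classical criterion, suitably adapted to the Higgs setting. Recall Miyaoka's result: for an ordinary vector bundle $V$ on a curve, the relative anticanonical class on $\PP V$ is nef if and only if $V$ is semistable; equivalently, one can phrase this in terms of the tautological quotient line bundle on $\PP V = \grass_1(V^\ast)$, or dually on the Grassmannians $\grass_s(V)$. The key point in the Higgs case is that stability is only tested against $\phi$-invariant subsheaves, and correspondingly the numerical classes $\theta_{s,\fV}$ live on the \emph{Higgs} Grassmannians $\hgrass_s(\fV) \subset \grass_s(V)$ rather than on the full Grassmannians. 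So the first step is to understand sections of $\hgrass_s(\fV) \to C$: by the universality property recalled just before Definition \ref{moddef}, a section $\sigma \colon C \to \hgrass_s(\fV)$ corresponds exactly to a rank $s$ locally free Higgs quotient $W$ of $\fV$, i.e. $\sigma^\ast Q_{s,\fV} \simeq W$ with $\ker(V \to W)$ a $\phi$-invariant subbundle.

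Next I would compute the degree of $\sigma^\ast \theta_{s,\fV}$. Since $\theta_{s,\fV} = c_1(Q_{s,\fV}) - \tfrac{s}{r}\rho_s^\ast c_1(V)$ and $\rho_s \circ \sigma = \id_C$, one gets
$$
\deg \sigma^\ast \theta_{s,\fV} = \deg W - \frac{s}{r}\deg V = s\bigl(\mu(W) - \mu(\fV)\bigr).
$$
Thus nefness of $\theta_{s,\fV}$ tested on sections of $\hgrass_s(\fV)$ is literally the statement that every locally free Higgs quotient of rank $s$ has slope $\ge \mu(\fV)$; ranging over all $s$ and using the dual formulation (a Higgs subsheaf of rank $r-s$ corresponds to a Higgs quotient of rank $s$, and one may pass to the saturation to reduce to locally free quotients on a curve, where saturated subsheaves are subbundles), this is exactly semistability of $\fV$. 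This gives the ``only if'' direction immediately: if all $\theta_{s,\fV}$ are nef, then in particular they have nonnegative degree on every section, so $\fV$ is semistable.

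For the ``if'' direction — semistability implies all $\theta_{s,\fV}$ nef — the subtlety, which I expect to be the main obstacle, is that a curve $Y \subset \hgrass_s(\fV)$ need not be a section of $\rho_s$: it may be a multisection, i.e. $\rho_s|_Y \colon Y \to C$ is a finite cover of some degree $d$. The standard device (used repeatedly elsewhere in this paper, e.g. in the proofs of Theorems \ref{princMiya} and \ref{Miyahigher}) is to base-change: pull back along the normalization $\tilde Y \to C$ of $Y$, so that the pulled-back curve acquires a genuine section of $\hgrass_s(f^\ast\fV) \to \tilde Y$. Since semistability of a Higgs bundle on a curve is preserved by pullback under finite maps (this is Lemma \ref{basechange}, or directly the Higgs analogue of the elementary fact for vector bundles), $f^\ast\fV$ is semistable on $\tilde Y$, hence $\deg$ of the pulled-back $\theta_{s}$ on that section is $\ge 0$ by the computation above; pushing forward, $\deg(\theta_{s,\fV}\cdot [Y]) = \tfrac{1}{?}\deg$ of the pulled-back class $\ge 0$ (up to the positive factor coming from the covering degree). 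One also has to check the ``vertical'' case where $Y$ is contained in a fibre of $\rho_s$: the fibre is an honest Grassmannian $\grass_s(V_x)$ intersected with the Higgs condition, and $Q_{s,\fV}$ restricted there is the tautological quotient bundle, which is globally generated hence nef, while $\rho_s^\ast c_1(V)$ restricts to zero — so $\theta_{s,\fV}|_{\text{fibre}}$ is nef. Combining the vertical and multisection cases exhausts all irreducible curves in $\hgrass_s(\fV)$, so $\theta_{s,\fV}$ is nef. I would present this last step carefully since it is where the geometry of the Higgs Grassmannian (singular in general) enters; but as the base is a curve, $\hgrass_s(\fV)$ and its curves cause no real trouble beyond the covering-degree bookkeeping.
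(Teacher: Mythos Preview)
Your proposal is correct and follows essentially the same route as the paper's proof: the slope computation $\deg\sigma^\ast\theta_{s,\fV}=s(\mu(W)-\mu(\fV))$ on sections handles the ``nef $\Rightarrow$ semistable'' direction, and for ``semistable $\Rightarrow$ nef'' you use the same base-change trick (reducing a multisection to a section after a finite cover, invoking preservation of Higgs semistability under pullback) that the paper borrows from the proofs of Theorems \ref{princMiya} and \ref{Miyahigher}. You are in fact more explicit than the paper on two auxiliary points --- the vertical-curve case (which the paper silently absorbs into ``surjects onto $C$'') and the saturation step needed to pass from arbitrary Higgs subsheaves to subbundles on a curve --- so your write-up would be slightly more complete, but the method is the same.
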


\begin{proof} Assume $\fV$ is semistable.  If for some $s$ the  
class $\theta_{s, \fV}$ is not nef 
there is an irreducible curve $C'\subset \hgrass_{s}(\fV)$ which surjects onto  
$C$ and is such that  $C'\cdot\theta_{s,\fV}<0$.  As in the proof
of Theorem \ref{Miyahigher}, we may assume that $C'\to C$ is an isomorphism.
Denote   by $\fQ$ 
the restriction of $Q_{s,\fV} $ to $C'$, and let $\fV'=(p_s^\ast\fV)_{\vert C'}$,
where $p_s\colon\hgrass_s(\fV)\to C$ is the projection. 
$\fV'$ is semistable, and  we have 
$$ 0 > [C'] \cdot\theta_{s,\fV} = [C']\cdot (c_1(\fQ)-\frac{s}{r}\, {p_s}^{\ast}c_1(V))
= s (\mu(\fQ) - \mu(\fV'))
$$
but this contradicts the semistability of $\fV'$.

 If all classes $\theta_{s, \fV}$  are nef, let $\fV'$ be a rank $s$ Higgs quotient of $\fV$,
and let $\sigma\colon C\to\hgrass_s(\fV)$ be the corresponding section. Then
$$ 0 \le \theta_{s, \fV} \cdot [\sigma(C)] = s(\mu(V')-\mu(V))$$
so that $\fV$ is semistable.
\end{proof}

This implies that  conditions (i) and (ii) in Theorem \ref{1.3BHR} are equivalent.
 
\begin{lemma} Let $\fV$ be a Higgs bundle on a smooth
projective variety $X$. If the restriction of $\fV$ to any smooth curve $C$ is $X$
is semistable, and $c_1(V)=0$, then $\fV$ is H-nflat.
\label{curverestrHnef}
\end{lemma}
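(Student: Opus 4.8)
The plan is to prove that $\fV$ is H-nflat by showing that both $\fV$ and $\fV^\ast$ are H-nef, and to establish H-nefness via the recursive criterion of Proposition \ref{crit}. First I would observe that since $c_1(V)=0$, the classes $\theta_{s,\fV}=[c_1(Q_{s,\fV})]$ coincide (up to the vanishing term) with $c_1(Q_{s,\fV})$, and the hypothesis that $\fV_{\vert C}$ is semistable for every smooth curve $C$, combined with Lemma \ref{curves}, tells us that all $\theta_{s,\fV}$ are nef; equivalently, by Theorem \ref{1.3BHR}, condition (i) holds. So the line bundles $\det(Q_{s,\fV})$ have nef first Chern class. The point is to leverage this to get nefness of \emph{all} the iterated determinant bundles $\det(Q(s_1,\dots,s_k)_\fV)$ and of the rank-one bundles $Q(1,s_2,\dots,s_k)_\fV$ appearing in Proposition \ref{crit}(ii).

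The key step is an induction on the rank $r$ of $\fV$. For $r=1$, $\fV=(L,0)$ with $\deg L = 0$ on every curve, hence $L$ is nef (a line bundle which is nonnegative on every curve is nef), and $L^\ast$ likewise, so $\fV$ is H-nflat. For the inductive step: take any $s$ with $0<s<r$. Restricting to a curve $C$, the quotient $Q_{s,\fV}$ of $\fV_{\vert C}$ is a semistable Higgs bundle, and since $\deg Q_{s,\fV}=\theta_{s,\fV}\cdot[\text{section}]$ is forced to be $0$ (because $\theta_{s,\fV}$ is nef and its pullback to $X$ via $c_1(V)=0$ gives $c_1(Q_{s,\fV})$ nef while the complementary subbundle $S_{r-s,\fV}$ also has nef-dual determinant by the analogous argument, pinning both degrees to zero)—this is the crux: one shows $c_1(Q_{s,\fV})$ restricted to any curve in $X$ is zero, not merely nonnegative. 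Concretely, $\theta_{s,\fV}$ nef and $-\theta_{r-s,\fV^\ast}$-type considerations (the universal subbundle of $\fV$ corresponds to a universal quotient of $\fV^\ast$) force $c_1(Q_{s,\fV})\cdot[C']=0$ for every curve $C'$ in the Higgs Grassmannian. Then $Q_{s,\fV}$ restricted to any curve in $\hgrass_s(\fV)$ is semistable with vanishing degree, so by the inductive hypothesis (its rank $s<r$) $Q_{s,\fV}$ is H-nflat, in particular H-nef, and $\det(Q_{s,\fV})$ is nef. Iterating this over the tower of Higgs Grassmannians handles all the strings $s_1<\dots<s_k$, and the rank-one bundles $Q(1,s_2,\dots,s_k)_\fV$ are nef because they are themselves rank-one H-nflat Higgs bundles by the same descent. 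Hence $\fV$ is H-nef by Proposition \ref{crit}. Applying the identical argument to $\fV^\ast$ (whose restriction to curves is semistable with $c_1=0$ as well) gives that $\fV^\ast$ is H-nef, so $\fV$ is H-nflat.

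I expect the main obstacle to be the rigidity step: upgrading ``$\theta_{s,\fV}$ is nef'' to ``$c_1(Q_{s,\fV})$ has degree zero on every curve in $\hgrass_s(\fV)$.'' On curves in $X$ this is immediate from semistability of $\fV$ together with semistability of the dual forcing both $\mu(Q)\ge 0$ and $\mu(Q)\le 0$; the subtlety is propagating this to curves lying in the Grassmannian bundle that do not map finitely to $X$—but such curves lie in fibers, where $Q_{s,\fV}$ restricts to a quotient of a trivial bundle, making $c_1$ automatically zero on them. Assembling these two cases (horizontal curves handled by semistability plus $c_1(V)=0$, vertical/fiber curves handled by triviality on fibers) gives nefness of $c_1(Q_{s,\fV})$ on all curves, hence nefness of $\det(Q_{s,\fV})$, and the induction closes. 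The functoriality in Proposition \ref{properties} and the pullback relation \eqref{pb} are what make the iterated strings reduce cleanly to the rank-one base case.
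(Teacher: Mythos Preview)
Your inductive strategy has a concrete error in the ``rigidity step.'' You claim that on curves contained in a fibre of $\hgrass_s(\fV)\to X$ the bundle $Q_{s,\fV}$ restricts to a quotient of a trivial bundle, ``making $c_1$ automatically zero on them.'' Being a quotient of a trivial bundle does \emph{not} force $c_1=0$: on a fibre $\grass_s(V_x)$ the universal quotient is exactly the tautological quotient of $V_x\otimes\cO$, and its determinant is the ample Pl\"ucker line bundle. So $c_1(Q_{s,\fV})\cdot[C']>0$ for any curve $C'$ in a fibre, and your claimed degree-zero statement is false. Consequently you cannot feed $Q_{s,\fV}$ back into the inductive hypothesis, since that hypothesis requires $c_1=0$.

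The horizontal part of the argument is also not right: semistability of $\fV^\ast$ bounds slopes of quotients of $\fV^\ast$, not of $\fV$, so it does not give $\mu(Q)\le 0$ for a Higgs quotient $Q$ of $\fV$. In fact $\mu(Q)\ge 0$ is all you get, and strict inequality certainly occurs (e.g.\ a non-split extension). So $\det(Q_{s,\fV})$ is nef, not numerically trivial, and the induction does not close. The paper's proof avoids this entirely: it does not try to show $Q_{s,\fV}$ is H-nflat, but instead verifies the two conditions of Proposition~\ref{crit} directly. For the determinant bundles $\det(Q(s_1,\dots,s_k)_\fV)$ it uses universality to produce a morphism $g_{s_1,\dots,s_k}\colon \hgrass_{s_1}(Q(s_2,\dots,s_k)_\fV)\to\hgrass_{s_1}(\fV)$ under which $Q(s_1,\dots,s_k)_\fV$ is the pullback of $Q_{s_1,\fV}$; since $c_1(V)=0$ one has $c_1(Q_{s_1,\fV})=\theta_{s_1,\fV}$, which is nef by Lemma~\ref{curves}. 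For the rank-one bundles $Q(1,s)_\fV$ it argues separately that any curve in $\hgrass_1(Q_{s,\fV})$ (after base change) yields a rank-one Higgs quotient $\mathcal L$ of $\fV$ itself, and semistability gives $\deg\mathcal L\ge\mu(\fV)=0$. No induction on rank is needed.
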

\begin{proof} We may assume that $X$ is a curve. Let $$\lambda_{s,\fV} = c_1(\cO_{\PP Q_{s,\fV}}(1))_{\vert \hgrass_1(Q_{s,\fV})}\,.$$
We show that for every $s$, with $0<s<r=\rk(\fV)$, the class $\lambda_{s,\fV}$ is nef.
Let $C$ be a curve in $ \hgrass_1(Q_{s,\fV})$. Possibily after a base change, 
we may assume that $C$ projects isomorphically onto a curve $C'$ in $\hgrass_s(\fV)$
and that this projects isomorphically onto $X$. We have the diagram
$$\xymatrix{
C \ar[r]^{j\ \ \ \  }& \hgrass_1(Q_{s,\fV}) \ar[d] \\
C'\ar[u]^\sigma \ar[r] & \hgrass_s(\fV)\ar[d] \\
& X \ar[ul]^{\sigma'} }$$
If we let $\mathcal L = (j\circ\sigma\circ\sigma')^\ast \cO_{\PP Q_{s,\fV}}(1)$
then $\mathcal L$ is a rank one Higgs quotient of $\fV$, so that
$\mu(\fV)\le \deg(\mathcal L)$. But since $\deg (\mathcal L) - \mu(\fV) = [C]\cdot \lambda_{s,\fV}$,
we have that $\lambda_{s,\fV}$ is nef.

Now we prove that $\fV$ is H-nflat. In this actually enough to prove that $\fV$ is H-nef, and then
apply the same reasoning to $\fV^\ast$. Now, $\lambda_{s,\fV}$ is the first Chern class
of the line bundle $Q_{1,Q_{s,\fV}}$. In view of Proposition \ref{crit},
it remains only to show that the determinant bundles $\det(Q(s_1,\dots,s_k)_\fV)$
are nef for all strings of integers $s_1,\dots,s_k$ as in Proposition \ref{crit}.
We note that $Q(s_1,\dots,s_k)_\fV$ is a Higgs bundle on $\hgrass_{s_1}(Q(s_2,\dots,s_k)_\fV)$
and that there is a morphism $\rho\colon\hgrass_{s_1}(Q(s_2,\dots,s_k)_\fV)\to X$ such
that $Q(s_1,\dots,s_k)_\fV$ is a Higgs quotient of $\rho^\ast\fV$. Therefore, by universality,
there is morphism $g_{s_1,s_2\dots,s_k}\colon \hgrass_{s_1}(Q(s_2,\dots,s_k)_\fV)\to \hgrass_{s_1}(\fV)$
such that $Q(s_1,\dots,s_k)_\fV\simeq {g_{s_1,s_2\dots,s_k}}^\ast Q_{s_1,\fV}$. We have now
$$c_1(Q(s_1,\dots,s_k)_\fV) = {g_{s_1,s_2\dots,s_k}}^\ast(c_1(Q_{s_1,\fV}))= {g_{s_1,s_2\dots,s_k}}^\ast \theta_{s_1,\fV}$$
since $c_1(V)=0$, so that $\det(Q(s_1,\dots,s_k)_\fV)$ is nef since $\theta_{s_1,\fV}$ is nef  by Lemma \ref{curves}.
\end{proof}

\begin{prop}  An H-nflat    Higgs bundle $\fV$ on a smooth
projective variety $X$ is semi\-stable. \label{flatisss}
\end{prop}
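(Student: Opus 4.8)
The plan is to reduce the statement to the case of a curve and then argue directly. First I would observe that it suffices to prove that the restriction $\fV_{\vert C}$ is semistable for every smooth projective curve $C$. Indeed, if $\fV$ were not semistable with respect to the polarization $H$, a destabilizing Higgs subsheaf $W$ --- which we may take saturated, hence still $\phi$-invariant, and locally free outside a closed subset of codimension $\ge 2$ --- would restrict, on a general smooth connected complete intersection curve $C=H_1\cap\dots\cap H_{n-1}$ of large degree avoiding that subset, to a Higgs subbundle $W_{\vert C}\subset\fV_{\vert C}$ with $\mu(W_{\vert C})>\mu(\fV_{\vert C})$, since passing to $C$ multiplies the relevant slopes by the positive constant $(\deg H)^{n-1}$; smoothness and connectedness of $C$ follow from Bertini and the Lefschetz theorem. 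Moreover $\fV_{\vert C}$ is again H-nflat: the restriction of an H-nef Higgs bundle to a curve is H-nef, because the Higgs Grassmannians and the universal Higgs quotient bundles $Q_{s,\fV}$ are compatible with base change, the pullback of a nef line bundle is nef, and hence the recursive conditions in Definition \ref{moddef} are preserved (this is the mechanism underlying Proposition \ref{properties}); likewise $(\fV_{\vert C})^\ast=(\fV^\ast)_{\vert C}$ is H-nef. So we may assume $X=C$ is a smooth projective curve.

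On the curve I would first pin down the degree. Since $\fV$ is H-nef, $\det V$ is nef (this is part of Definition \ref{moddef} when $\rk\fV\ge 2$, and immediate when $\rk\fV=1$), so $\deg V\ge 0$; since $\fV^\ast$ is H-nef, $\det(V^\ast)=(\det V)^{-1}$ is nef, so $\deg V\le 0$. Hence $\deg V=0$ and $\mu(\fV)=0$. Now let $W$ be a proper nontrivial Higgs subsheaf of $\fV$. Replacing $W$ by its saturation --- which is still a Higgs subsheaf, is locally free on the curve, and has degree no smaller than $\deg W$ --- we may assume that $Q:=V/W$ is a Higgs quotient \emph{bundle} of $\fV$, of rank $s$ with $1\le s\le \rk V-1$. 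By Proposition \ref{properties}(ii), $Q$ is H-nef, so $\det Q$ is nef (or $Q$ itself is nef when $s=1$), whence $\deg Q=\deg(\det Q)\ge 0$. Therefore $\deg W=\deg V-\deg Q=-\deg Q\le 0=\deg V$, so that $\mu(W)\le 0=\mu(\fV)$. This proves $\fV$ semistable on the curve, and by the reduction above it is semistable on $X$.

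The only step that is not completely formal is the passage to curves: one must verify that H-nefness is inherited by the restriction to an \emph{arbitrary} smooth curve, not merely under the finite surjective morphisms covered by Proposition \ref{properties}(i). This follows from the functoriality of the Higgs Grassmannian and of the universal quotients $Q_{s,\fV}$ under base change, recorded right after Definition \ref{moddef}, combined with the fact that nef line bundles pull back to nef line bundles; an induction on the rank then propagates the recursive nefness conditions. Everything else is elementary once we are on a curve, so I expect this base-change compatibility to be the main point to make precise.
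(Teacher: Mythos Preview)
Your proof is correct and follows the same overall strategy as the paper: reduce to a curve using that H-nflatness is preserved under restriction, observe $c_1(V)=0$ from the H-nflatness of $\det V$, and deduce semistability from the nefness of determinants of Higgs quotients. The one genuine difference is in the endgame on the curve. The paper argues via the universal quotients on the Higgs Grassmannians: since each $Q_{s,\fV}$ is H-nef, $\det Q_{s,\fV}$ is nef, and as $c_1(V)=0$ this says precisely that the classes $\theta_{s,\fV}$ are nef; then Lemma~\ref{curves} (the Miyaoka-type criterion) gives semistability. You instead work directly with an arbitrary Higgs quotient bundle $Q$ of $\fV$, invoke Proposition~\ref{properties}(ii) to get $Q$ H-nef, hence $\deg Q\ge 0$, and conclude $\mu(W)\le 0$ for every Higgs subsheaf. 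Your route is slightly more elementary in that it bypasses the $\theta$-classes and Lemma~\ref{curves} altogether; the paper's route has the advantage of exhibiting the result as a formal consequence of the Miyaoka criterion already established. Your discussion of the base-change compatibility needed for restriction to curves is also more explicit than the paper's, which simply asserts that H-nflatness restricts to closed subvarieties.
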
 
\begin{proof} As the restriction of an H-nflat Higgs bundle $\fV$
to a closed subvariety of $X$ is H-nflat,  we may assume that $X$ is a curve. Since $\fV$ is in particular H-nef,
all universal Higgs quotients $Q_{s,\fV}$ are H-nef, and then the determinant
bundles $\det Q_{s,\fV}$ are nef. On the other hand, since $\det(V)$
is numerically flat, we have $c_1(V)=0$. Therefore the classes $\theta_{s,\fV}$
are nef. We conclude by Lemma \ref{curves}.
\end{proof}

  To conclude the proof of Theorem  \ref{1.3BHR}  one needs to prove that
  condition (iii) implies condition (ii).  This is proved in \cite{BH}. 
  
  \begin{remark} For (non-Higgs) vector bundles, the three conditions
  in Theorem \ref{1.3BHR} are equivalent \cite{BH}.\end{remark}

 \bigskip\frenchspacing

\end{document}